\DeclareMathOperator{\ev}{ev}
\DeclareMathOperator{\TC}{TC}
\DeclareMathOperator{\cat}{cat}
\DeclareMathOperator{\secat}{secat}
\DeclareMathOperator{\nil}{nil}
\DeclareMathOperator{\conn}{conn}
\DeclareMathOperator{\id}{id}
\DeclareMathOperator{\Ker}{Ker}
\DeclareMathOperator{\im}{Im}
\DeclareMathOperator{\Cov}{Cov}
\newcommand{\CC}{\mathord{\mathbb{C}}}
\newcommand{\RR}{\mathord{\mathbb{R}}}
\newcommand{\QQ}{\mathord{\mathbb{Q}}}
\newcommand{\ZZ}{\mathord{\mathbb{Z}}}
\newcommand{\NN}{\mathord{\mathbb{N}}}
\newcommand{\wX}{\widetilde{X}}
\newcommand{\cH}{\check{H}}
\newcommand{\cd}{\mathrm{cd}}
\newcommand{\pullbackcorner}[1][dr]{\save*!/#1-1.5pc/#1:(-1,1)@^{|-}\restore}
\newtheorem{theorem}{Theorem}[section]
\newtheorem{lemma}[theorem]{Lemma}
\newtheorem{proposition}[theorem]{Proposition}
\newtheorem{corollary}[theorem]{Corollary}
\newtheorem{remark}[theorem]{Remark}
\newtheorem{example}[theorem]{Example}
\newcommand{\partsec}[1]{admits a partial section of $\pi_{#1}$}
\title{Topological Complexity of a Map}
\author{Petar Pave\v{s}i\'c}
\address[Petar Pave\v{s}i\'c]{\newline \newline Faculty of Mathematics and Physics,  
\newline University of Ljubljana, 
\newline Jadranska 21, Ljubljana, 
\newline Slovenia}
\email{petar.pavesic@fmf.uni-lj.si}
\thanks{The author was supported by the Slovenian Research Agency research grant P1-0292 and research project J1-7025.}
\keywords{Topological complexity, robotics, kinematic map, fibration, covering}
\subjclass[2010]{Primary 55M99; Secondary 70B15, 68T40}
\begin{document}

\maketitle

\begin{abstract}
We study certain topological problems that are inspired by applications to autonomous robot manipulation.
Consider a continuous map $f\colon X\to Y$, where $f$ can be a kinematic map from the configuration space 
$X$ to the working space $Y$ of a robot arm or a similar mechanism. Then one can associate to $f$ a number
$\TC(f)$, which is, roughly speaking, the minimal number of continuous rules that are necessary 
to construct a complete manipulation algorithm for the device. Examples show that $\TC(f)$ is very sensitive 
to small perturbations of $f$ and that its value depends heavily on the singularities  of $f$. This fact considerably 
complicates the computations, so  we focus here on 
estimates of $\TC(f)$ that can be expressed in terms of homotopy invariants of spaces $X$ and $Y$, or
that are valid if $f$ satisfy some additional assumptions like, for example, being a fibration.

Some of the main results are the derivation of a general upper bound for $\TC(f)$, invariance of $\TC(f)$
with respect to deformations of the domain and codomain, proof that $\TC(f)$ is a FHE-invariant, 
and the description of a cohomological lower bound for $\TC(f)$. Furthermore, if $f$ is a fibration we derive
more precise estimates for $\TC(f)$ in terms of the Lusternik-Schnirelmann category and the topological complexity
of $X$ and $Y$. We also obtain some results for the important special case of covering projections. 
\end{abstract}

\maketitle

\section{Introduction}

In 2003 Michael Farber \cite{Farber:TCMP} introduced the topological complexity of 
a space $X$, denoted $\TC(X)$, as a homotopy-invariant measure of the difficulty to plan 
a continuous motion of a robot in 
the space $X$. Over the years the interest for applications of topological complexity and 
related concepts to problems in
robotics grew into an independent field of research. Topological complexity of a map is 
a natural extension of $\TC(X)$ suggested by Alexander Dranishnikov during the conference 
on Applied Algebraic Topology in Castro Urdiales (Spain, 2014). The new concept opens the 
possibility to model several new concepts in topological robotics. The present author used 
the topological complexity of a map in in \cite{Pav:Complexity} as a measure of manipulation
complexity of a robotic device. That point of view was further developed in 
\cite{Pav:Topologist's view}.
The main thrust of both papers was on applications to kinematic maps that arise in commonly 
used robot configurations. As a consequence, many related theoretical 
question were left aside. The purpose of the present paper is to fill that gap.

Let $f\colon X \rightarrow Y$ be a continuous map: given $x \in X$, $y \in Y$, we look for a path 
$\alpha = \alpha(x,y)$ in $X$ starting at $x$ and ending at a point that is mapped to $y$ by $f$. 
We normally assume 
that $X$ is path-connected and that $f$ is surjective, so that the above problem always has 
a solution.
However, we want the assignment $(x,y) \mapsto \alpha(x,y)$ to satisfy an additional condition, 
namely to be as continuous as possible. More formally, we consider the space $X^I$ of all paths in $X$ 
and the projection map 
$$\pi_f \colon X^I \rightarrow X \times Y \quad \text{where} \quad 
\pi_f(\alpha) = \big(\alpha(0),f(\alpha(1))\big).$$
Then every solution to the above-mentioned problem can be interpreted as a section 
$s\colon X\times Y\to X^I$ to the 
projection $\pi_f$. There are simple examples of maps $f\colon X\to Y$ such that $\pi_f$ 
does not admit a section that
is continuous on entire $X\times Y$. Therefore, one may attempt
to split $X\times Y$ into subspaces, each admitting a continuous section to $\pi_f$.  
The minimal number of elements in such a partition is the \emph{topological complexity} of 
the map $f$.

Topological complexity of a map can be viewed as a natural generalization of the 
topological complexity 
of a single space, introduced by Farber \cite{Farber:TCMP}. However, computation of $\TC(f)$ requires the study 
of a host of new phenomena related to its domain, codomain and singularities. 

In this paper we will not be concerned with the applications of $\TC(f)$ to robotics. Nevertheless to give 
a flavour of the maps which one may want to study, we just mention a variety of situations 
that can be modelled by $\TC(f)$
(see \cite[Section 5]{Pav:Topologist's view} for more details).

\begin{itemize}[leftmargin=*]
\item If $X$ is the configuration space of a system and $f\colon X \rightarrow Y$ is a projection 
to the configuration space of a part or a subsystem, then $\TC(f)$ measures the complexity of manipulation of the components of a 
complex mechanism (e.g a moving platform), where one is only interested in the positioning of some intermediate part
of the structure (e.g. an object on the platform);
\item The complexity of manipulation of a robotic arm is modelled by letting $X$ be a joint space, $Y$ the working space 
and $f: X \rightarrow Y$ the forward kinematic map of the arm (see \cite{Pav:Complexity} for a detailed discussion);
\item Let $X$ be a configuration space of a robotic mechanism where different points of $X$ (i.e. positions of the mechanism) 
are functionally equivalent (e.g. for grasping, pointing,\ldots). If we express functional equivalence in terms of 
the action of some symmetry group $G$, then the manipulation complexity of the device is modelled by 
the topological complexity of the quotient map $X \rightarrow X/G$.
\end{itemize}

We begin the paper with a discussion of the 'correct' definition of the complexity of a map. In fact, 
a straightforward generalization of the standard definition of topological complexity of a space
proposed by  Dranishnikov turned out to be somewhat inadequate for maps with singularities. We
devised an alternative approach which is equivalent to Dranishnikov's when applied to fibrations 
but yields more satisfactory results for general maps. 

The third section is dedicated to a various upper and lower estimates for the
topological complexity of a map. Some of these are valid for arbitrary maps, while other hold for maps
that have some additional properties, e.g. are fibrations or admit a section (see section \ref{sec:summary} for a summary of main results). 

In the final section we specialize to 
maps that are fibrations and express their complexity in terms of other homotopy invariants. This
allows 
computation of topological complexity of many standard fibrations. In particular we show that 
 topological
complexities of covering projections can be viewed as approximations of topological complexity of the
 base space.

\section{Definition of $\TC(f)$}
\label{sec:Definition}

We are going to define the topological complexity of a map in a way that will allow a comparison with
two other related concepts - $\cat(X)$, the \emph{Lusternik-Schnirelmann category} of $X$, and $\TC(X)$,
the \emph{topological complexity} of $X$. In fact all three concepts can be expressed in terms of 
sectional numbers of certain maps. 

Let $p\colon E\to B$ be a continuous surjection. A \emph{section} of $p$ is a right inverse of $p$
i.e.,  a map $s\colon B\to E$, such that $p\circ s=1_B$. Moreover, given a subspace $A\subset B$, a 
\emph{partial section} of $p$ over $A$ is a section of the restriction map $p\colon p^{-1}(A)\to A$. 
If $p$ does not admit a continuous section, it may still happen that it admits sufficiently many  
continuous partial sections so that their domains cover $B$. 

We define  $\sec(p)$, the \emph{sectional number} of $p$  to be the minimal integer $n$ for which there
exists an increasing sequence of open subsets
$$\emptyset = U_0 \subset U_1 \subset U_2 \subset \cdots \subset U_k = B,$$
such that each difference $U_i-U_{i-1}$, $i=1,\ldots,n$ admits a continuous partial section to $p$. 
If there is no such integer $n$, then we let $\sec(p)=\infty$.

A word of warning is in order here, since the above is not the entirely standard definition of sectional number. 
Indeed, sectional number is more commonly  defined as the minimal number of elements in an \emph{open} cover 
of $B$, such that each element admits a continuous partial section to $p$. Let us denote this second quantity 
as $\sec_{\rm op}(p)$. Obviously $\sec(p)\le \sec_{\rm op}(p)$. On the other hand, it is easy to see 
that if $p$ is a fibration and $B$ is an ANR space, then  $\sec(p)$ and $\sec_{\rm op}(p)$ actually 
coincide.
One should also note the similarity between $\sec_{\rm op}(p)$ and ${\rm secat}(p)$,
the \emph{sectional category} of $X$ (also called \emph{Schwarz genus} of $p$, cf. \cite{Schwarz}, \cite{CLOT}). 
The latter counts the minimal number of \emph{homotopy} sections of $p$, therefore $\sec_{\rm op}(p)={\rm secat}(p)$
if $p$ is a fibration, but in general $\sec(p)$ can be much bigger than ${\rm secat}(p)$ (see 
\cite[Section 5]{Pav:Topologist's view} for some specific examples).

We are now ready to state the definition of the Lusternik-Schnirelmann category and the definitions 
of the topological complexity of a space and of a map. For any space $X$ let $X^I$ be the space of 
all continuous 
paths in $X$ (endowed with the compact-open topology) and let $PX$ be the subspace of all based paths 
in $X$ starting at some fixed base-point $x_0\in X$ (which we omit from the notation). 
It is well known that for any point $c\in [0,1]$ the evaluation map 
$$\ev_c\colon X^I\to X,\ \ \ \alpha\mapsto \alpha(c)$$
is a fibration (and similarly for $PX$ in place of $X$, provided that $c\in (0,1]$).  

The \emph{Lusternik-Schnirelmann category} of a space $X$ is defined as 
$$\cat(X)=\sec(\ev_1\colon PX\to X).$$
If $X$ is an ANR, then our definition is equivalent to the standard one that uses open coverings of $X$ by 
categorical subsets.  For the convenience of the reader we list in the next proposition the most important
properties of the Lusternik-Scnirelmann category

\begin{proposition}\ \\[-6mm]
\label{prop:LScat properties}
\begin{enumerate}
\item $\cat(X)=1$ if, and only if $X$ is contractible;
\item Homotopy invariance: $X\simeq Y \Rightarrow \cat(X)=\cat(Y)$;
\item Dimension-connectivity estimate: if $X$ is $d$-dimensional and $(c-1)$-connected, then 
$\cat(X)\le\frac{d}{c}+1$;
\item Cohomological estimate: $\cat(X)\ge \nil\widetilde H^*(X)$, where\\
$\widetilde H^*(X)$ is the ideal of positive-dimensional cohomology classes in $H^*(X)$;
\item Product formula: $\cat(X\times Y)\le \cat(X)+\cat(Y)-1$.
\end{enumerate}
\end{proposition}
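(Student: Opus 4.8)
The plan is to verify the five items separately, each time rephrasing the assertion as a property of sections of the based path fibration $\ev_1\colon PX\to X$. Since every ANR has the homotopy type of a CW complex and, for a fibration over an ANR base, $\sec$ agrees with the version computed from open covers, item~(2) lets me assume throughout that $X$ is a CW complex and that $\cat(X)$ is realized by an open cover of $X$ by \emph{categorical} sets, i.e.\ sets whose inclusion into $X$ is nullhomotopic. Item~(1) is then immediate: a global section of $\ev_1$ is a continuous choice of a path from $x_0$ to each point of $X$, which is exactly a based contraction of $X$, and conversely a contraction yields such a section. Item~(2) itself I would prove by pulling back covers: if $f\colon X\to Y$ is a homotopy equivalence with inverse $g$ and $H\colon gf\simeq\id_X$, a categorical open cover $\{V_i\}$ of $Y$ pulls back to the cover $\{f^{-1}(V_i)\}$ of $X$, and over $f^{-1}(V_i)$ one produces a partial section of $\ev_1^X$ by applying $g$ to the nullhomotopy of $V_i\hookrightarrow Y$ and splicing the result with the path $t\mapsto H(x,t)$ and with a fixed path from $x_0$ to $g(y_0)$; interchanging $f$ and $g$ gives $\cat(X)=\cat(Y)$. (Equivalently, the induced map $PX\to PY$ is a fibrewise homotopy equivalence over $f$, and $\sec$ is invariant under such data.)

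For item~(4): given a categorical open cover $W_1,\dots,W_n$ of $X$ and classes $u_1,\dots,u_n\in\widetilde H^*(X)$, each $u_i$ restricts to $0$ on $W_i$, hence lifts to a relative class $\bar u_i\in H^*(X,W_i)$; the $W_i$ being open, the relative cup product $\bar u_1\cdots\bar u_n$ is defined and lands in $H^*\!\big(X,\bigcup_i W_i\big)=H^*(X,X)=0$, while mapping to $u_1\cdots u_n$. Thus every product of $n$ positive-dimensional classes vanishes, i.e.\ $\nil\widetilde H^*(X)\le\cat(X)$. For item~(5): from categorical open covers $\{A_i\}_{i=1}^{m}$ of $X$ and $\{B_j\}_{j=1}^{n}$ of $Y$ the sets $A_i\times B_j$ form a categorical open cover of $X\times Y$ of size $mn$; to bring this down to $m+n-1$, set $C_k:=\bigcup_{i+j\le k+1}A_i\times B_j$ for $k=0,\dots,m+n-1$, an increasing family of open sets with $C_0=\emptyset$ and $C_{m+n-1}=X\times Y$. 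A point of $(A_i\setminus A_{i-1})\times(B_j\setminus B_{j-1})$ enters the family exactly at step $k=i+j-1$, so $C_k\setminus C_{k-1}$ is the disjoint union of those sets $(A_i\setminus A_{i-1})\times(B_j\setminus B_{j-1})$ with $i+j=k+1$; each of these is open and closed in $C_k\setminus C_{k-1}$ and, using $P(X\times Y)\cong PX\times PY$ and $\ev_1^{X\times Y}\cong\ev_1^X\times\ev_1^Y$, carries the partial section $\sigma_i\times\tau_j$ built from partial sections over $A_i$ and $B_j$. Gluing over the finitely many clopen pieces gives a partial section over $C_k\setminus C_{k-1}$, whence $\cat(X\times Y)\le m+n-1$.

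Item~(3) is the one with genuine content. After the reduction to CW complexes I would replace $X$ by a minimal CW model, which for a $(c-1)$-connected complex of dimension $d$ can be taken with a single $0$-cell and all remaining cells in dimensions $c,c+1,\dots,d$. Partitioning these dimensions into $\lceil(d-c+1)/c\rceil$ consecutive blocks of length $c$ exhibits $X$ as obtained from a point by that many successive mapping-cone attachments: within each block the new cells are attached simultaneously as the cone on a wedge of spheres. The step I expect to require the most care is the inequality $\cat(X'\cup_\varphi CA)\le\cat(X')+1$ for a single cone attachment — one must enlarge a categorical open cover of $X'$ to a categorical open cover of $X'\cup_\varphi CA$ with just one extra element, namely the open cone (which retracts to the cone point and is therefore categorical), and doing so continuously requires a thickening of the cover of $X'$ into $X'\cup_\varphi CA$ via the homotopy extension property. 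Iterating then gives $\cat(X)\le\lceil(d-c+1)/c\rceil+1\le\frac{d}{c}+1$. An alternative, cell-free route is to apply obstruction theory to the $n$-th Ganea fibration $p_n$ over $X$, whose fibre is the $(n+1)$-fold join of $\Omega X$ and hence $\big((n+1)c-2\big)$-connected: it admits a section over the $d$-dimensional complex $X$ as soon as $(n+1)c\ge d+1$, and by the characterization of category due to Ganea and Schwarz this forces $\cat(X)\le n+1$, yielding the same bound.
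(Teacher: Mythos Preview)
The paper does not actually prove this proposition: it is listed as standard background on the Lusternik--Schnirelmann category, with implicit reference to the monograph \cite{CLOT}. Your arguments for all five items are correct and are precisely the classical ones found there --- the relative cup product argument for (4), the diagonal reindexing for (5), and the Ganea/obstruction-theory approach to (3) --- so there is nothing to compare. One minor notational remark on item~(5): since the paper's definition of $\cat$ is phrased in terms of increasing open filtrations rather than arbitrary open covers, the differences $A_i\setminus A_{i-1}$ in your argument are unambiguous; had you started from an unordered open cover you would first need to pass to the associated filtration $A_1\cup\cdots\cup A_i$, but in the present setup no adjustment is needed.
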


More recently M. Farber \cite{Farber:TCMP} introduced the concept of a topological complexity of a space 
in order to provide a crude measure of the complexity of motion planning of mechanical systems, e.g. robot arms.
The \emph{topological complexity} of a (path-connected) space $X$ is
$$\TC(X):=\sec(\pi),\ \ \ \ \text{where}\ \ \pi=(ev_0,\ev_1)\colon X^I\to X\times X.$$
As before, if $X$ is an ANR space, then the above coincides with the Farber's original definition
(cf. \cite{Farber:ITR} or \cite{Pav:FATC}). It is not surprising that many properties of $\TC(X)$
resemble those of $\cat(X)$ and that the two quantities are closely related. The main properties of $\TC(X)$ are 
listed in the following proposition.

\begin{proposition}\ \\[-6mm]
\label{prop:TC properties}
\begin{enumerate}
\item $\TC(X)=1$ if, and only if $X$ is contractible;
\item Homotopy invariance: $X\simeq Y \Rightarrow \TC(X)=\TC(Y)$;
\item Category estimate: $\cat(X)\le TC(X)\le \cat(X\times X)$;
\item If $X$ is a topological group, then $\TC(X)=\cat(X)$;
\item Cohomological estimate: $\cat(X)\ge \nil(\Ker\Delta^*)$, where \\
$\Delta^*\colon\widetilde H^*(X\times X)\to H^*(X)$ is induced by the diagonal $\Delta\colon X\to X\times X$;
\item Product formula: $\TC(X\times Y)\le \TC(X)+\TC(Y)-1$.
\end{enumerate}
\end{proposition}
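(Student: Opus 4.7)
The plan is to view $\pi=(\ev_0,\ev_1)\colon X^I\to X\times X$ as a Hurewicz fibration and to derive each clause from general properties of sectional numbers of fibrations together with short ad hoc path manipulations. For (1), a contraction $H\colon X\times I\to X$ with $H(\cdot,1)\equiv x_0$ yields a global section $s(x,y):=H(x,\cdot)\ast\overline{H(y,\cdot)}$, while conversely a global section restricted to $\{x_0\}\times X$ supplies a contraction. Clause (2) follows because a homotopy equivalence $f\colon X\to Y$ induces a map of fibrations $(f^I,f\times f)\colon\pi_X\to\pi_Y$ that is a homotopy equivalence on both total and base spaces, and partial sections transport along such an equivalence.

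For (3), the inequality $\cat(X)\le\TC(X)$ is immediate by restriction of a partial section of $\pi$ to $\{x_0\}\times U$. For the upper bound, a partial section of $\ev_1\colon P(X\times X)\to X\times X$ over $W$ yields, for each $(x,y)\in W$, a path in $X\times X$ from $(x_0,x_0)$ to $(x,y)$; projecting to the two factors and concatenating gives a partial section of $\pi$ over $W$. For (4), set $\mu(x,y):=x^{-1}y$; a partial section $s\colon U\to PX$ of $\ev_1$ lifts to $(x,y)\mapsto\bigl(t\mapsto x\cdot s(x^{-1}y)(t)\bigr)$, which is a partial section of $\pi$ over $\mu^{-1}(U)$. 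Hence $\mu$-preimages of an open filtration realising $\cat(X)$ give one realising $\TC(X)\le\cat(X)$, and combined with (3) this yields equality.

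For (5), combine the inequality $\sec(\pi)\ge\secat(\pi)$ with Schwarz's classical bound $\secat(p)\ge\nil(\Ker p^*)$ for fibrations. The constant-path inclusion $X\hookrightarrow X^I$ is a homotopy equivalence that intertwines the diagonal $\Delta$ with $\pi$, so $\pi^*$ and $\Delta^*$ share the same kernel, giving the asserted bound (the displayed ``$\cat(X)$'' in (5) being evidently a typo for $\TC(X)$). For (6), given open filtrations $\{U_i\}$, $\{V_j\}$ realising $\TC(X)=m$ and $\TC(Y)=n$, the canonical reordering $(X\times X)\times(Y\times Y)\cong(X\times Y)^{\times 2}$ lets one form the filtration $W_k:=\bigcup_{i+j=k+1}U_i\times V_j$ of length $m+n-1$; on each stratum $(U_i-U_{i-1})\times(V_j-V_{j-1})$ the product of the corresponding partial sections is itself a partial section of $\pi_{X\times Y}$.

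The main obstacle is clause (5), because Schwarz's theorem is classically phrased for open covers whereas the paper's $\sec$ is defined via a filtration. One must verify $\sec(p)\ge\secat(p)$ for any fibration $p$, which follows from the homotopy lifting property: a partial section on a locally closed piece $U_i-U_{i-1}$ can be deformed to a homotopy section over the open set $U_i$ using the lift of a retraction onto that stratum. A subsidiary combinatorial check in (6) is needed to confirm that the stratified product of two filtrations really exhausts $(X\times Y)^{\times 2}$ in exactly $m+n-1$ strata and that products of locally defined sections assemble continuously.
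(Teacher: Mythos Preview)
The paper does not actually supply a proof of this proposition: it is recorded as a list of standard facts about $\TC(X)$, parallel to the list for $\cat(X)$ in the preceding proposition, and the paper moves on immediately to the definition of $\TC(f)$. So there is no ``paper's own proof'' to compare against; your proposal stands or falls on its own.

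Your arguments for (1)--(4) and (6) are essentially the standard ones and are fine. In (6) you should say explicitly why $W_k:=\bigcup_{i+j=k+1}U_i\times V_j$ is an \emph{increasing} filtration (it is, since $U_i\times V_j\subset U_{i+1}\times V_j$ and $(i+1)+j=k+2$), and why the strata decompose as separated unions $\bigcup_{i+j=k+1}(U_i-U_{i-1})\times(V_j-V_{j-1})$; but this is only a matter of writing out what you already have in mind.

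The genuine gap is in (5). Your proposed justification of $\sec(p)\ge\secat(p)$ --- ``a partial section on a locally closed piece $U_i-U_{i-1}$ can be deformed to a homotopy section over the open set $U_i$ using the lift of a retraction onto that stratum'' --- does not work: there is in general no retraction of $U_i$ onto $U_i-U_{i-1}$ (think of two concentric open disks), so nothing lets you promote a section on the difference to even a homotopy section on the larger open set. Two correct routes are available. First, the paper explicitly remarks (just after the definition of $\sec$) that for a fibration over an ANR base one has $\sec(p)=\sec_{\rm op}(p)=\secat(p)$; invoking this standing ANR hypothesis lets you apply Schwarz's bound directly. Second, and this is what the paper actually does later for $\TC(f)$ in Theorem~\ref{thm:coho bound}, one can bypass $\secat$ altogether by working with \v Cech cohomology: the relative cup product $\cH^*(B,A)\times\cH^*(B,A')\to\cH^*(B,A\cup A')$ is available for arbitrary locally compact pairs (Proposition~\ref{prop:Cech rel prod}), so the nilpotency argument goes through with the strata $U_i-U_{i-1}$ themselves, no open cover needed. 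Either fix closes the gap; your current sentence does not.
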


We may finally turn to the definition of the topological complexity of a map. Let 
$f\colon X\to Y$ be a continuous surjection between path-connected spaces, and let 
$\pi_f\colon X^I\to X\times Y$ be defined as $\pi_f:=(\ev_0,f\circ \ev_1)=(1\times f)\circ \pi$.
Then the \emph{topological complexity} of the map $f$ is defined as
$$\TC(f):=\sec(\pi_f).$$

Clearly $\TC(\id_X) = \TC(X)$, so the topological complexity of a map is a generalization of the 
topological complexity of a single space. We will see later (Example \ref{ex:basicTC}) that 
$\cat(X)=\TC(\ev_1\colon PX\to X)$, so the topological complexity of a map generalizes 
the Lusternik-Schnirelmann category as well.

Most of Section \ref{sec:general estimates} is dedicated to the appropriate 
extensions of Propositions \ref{prop:LScat properties} and \ref{prop:TC properties} for the topological 
complexity of a map.
In the rest of this section we will relate $\TC(f)$ to (partial) sections of $f$, and explain why 
a definition of $\TC(f)$ based on partial sections over open covers of $X\times Y$ does not work well in general.

Let $A \subset X \times Y$, such that $A$ \partsec{f}, say $s\colon A \rightarrow X^I$. 
For a fixed $x_0 \in X$, let 
$\hat{A} = \{y \in Y | (x_0,y) \in A\}$ and define $\hat s\colon \hat{A} \rightarrow X$ by 
$\hat s(y) =s(x_0,y)(1)$. Clearly, $\hat s$ is
a continuous partial section of $f$. Some of the consequences of this follow:
\begin{itemize}[leftmargin=*]
\item If $\pi_f\colon X^I\to X \times Y$ admits a global continuous section, 
then so does $f: X \rightarrow Y$, i.e. $f$ is essentially 
a retraction of $X$ to $Y$. This immediately gives plenty of maps whose complexity is bigger than 1. 
For example, the map 
$f\colon [0,3]\to [0,2]$ given by 
$$f(t):=\left\{\begin{array}{ll}
t & t\in [0,1]\\
1 & t\in [1,2]\\
t-1 & t\in [2,3]
\end{array}\right.$$
(see Figure \ref{fig:TC>1}) clearly does not admit a section, therefore its topological complexity must 
be bigger than one. 
Compare \cite[Section 5]{Pav:Topologist's view} for a general procedure
for constructing maps with contractible domain and codomain and with arbitrarily high topological complexity.

\begin{figure}[ht]
    \centering
    \includegraphics[scale=0.5]{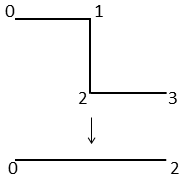}
    \caption{Map whose complexity is bigger than one.}
    \label{fig:TC>1}
\end{figure}

\item If $(x_0,y_0)$ is an interior point of $A$, then the above formula yields a partial section for $f$ 
defined on a neighborhood of
$y_0$. This raises the question of admissible domains for partial sections of $\pi$. In particular, 
if $f$ is not locally sectionable
at some point, then we cannot insist that the domains of partial sections are 
open subsets (as it is otherwise customary in the definition of $\TC(X)$ or $\cat(X)$), because
the topological complexity of such a map would be infinite. On the other hand, we are mostly interested
in the topological complexity of relatively tame maps, whose singular sets are usually closed, so that
our definition based on filtrations of $X\times Y$ by open sets works well 
(see also Section \ref{sec:upper bound} for some general finiteness estimates for $\TC(f)$). 
\end{itemize}

The following alternative description of $\TC(f)$ is often used in applications.

\begin{proposition}
\label{prop:alternative def}
Let $f\colon X\to Y$ be any map. Then $\TC(f)$ equals the minimal integer $n$ such that there exists 
an increasing sequence of closed subsets 
$$\emptyset = C_0 \subseteq C_1 \subseteq C_2 \subseteq \cdots \subseteq C_n = X \times Y$$
where $C_i - C_{i-1}$ \partsec{f} for $i=1,\ldots,n$.

Furthermore, if $X\times Y$ is locally compact, then $\TC(f)$ equals the minimal integer $n$ such 
that there exists a partition of $X\times Y$ into disjoint locally compact subsets $G_1, G_2, \ldots G_n$
where $G_i$ \partsec{f}  for $i=1,\ldots,n$.
\end{proposition}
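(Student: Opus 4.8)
The plan is to establish the two characterizations separately, in each case by showing mutual inequality with $\TC(f)=\sec(\pi_f)$. For the closed-filtration description, one direction is essentially free: given an increasing sequence of closed sets $\emptyset=C_0\subseteq\cdots\subseteq C_n=X\times Y$ with each $C_i-C_{i-1}$ admitting a partial section of $\pi_f$, I would pass to complements $V_i:=(X\times Y)\setminus C_{n-i}$, which form an increasing sequence of open sets with $V_0=\emptyset$, $V_n=X\times Y$, and $V_i-V_{i-1}=C_{n-i+1}-C_{n-i}$, so the open filtration in the definition of $\sec(\pi_f)$ is realized with the same value; hence $\sec(\pi_f)\le n$, giving $\TC(f)\le$ (the closed-filtration number). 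For the reverse inequality I would start from an open filtration $\emptyset=U_0\subset\cdots\subset U_n=X\times Y$ realizing $\sec(\pi_f)$ and again take complements: $C_i:=(X\times Y)\setminus U_{n-i}$ is closed, the sequence is increasing, $C_0=\emptyset$, $C_n=X\times Y$, and $C_i-C_{i-1}=U_{n-i+1}-U_{n-i}$ still carries a partial section. So the two filtration quantities agree on the nose; this part is a purely formal duality between increasing open and increasing closed filtrations and I do not expect any obstacle.

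The partition statement under local compactness is the substantive half. For the easy direction, suppose $X\times Y$ is partitioned into locally compact subsets $G_1,\dots,G_n$ each admitting a partial section of $\pi_f$. I would order the pieces and set $C_i:=G_1\cup\cdots\cup G_i$; the point is that a locally compact subspace of a (Hausdorff, say) space is the intersection of an open set with a closed set, and more relevantly, one can arrange the $C_i$ so that $C_i-C_{i-1}=G_i$ admits a partial section, after passing to closures in the spirit of the first part. The cleanest route is probably: use the partition to build the closed filtration directly — replace each $G_i$ by $\overline{G_1\cup\dots\cup G_i}$ is not quite right, so instead I would invoke the closed-filtration characterization already proved and argue that a disjoint partition into $n$ sectionable pieces refines to an increasing closed filtration of length $n$ with the successive differences still sectionable; this uses that $G_i$ being locally compact is locally closed, hence open in its closure, which lets one absorb each piece into the filtration without increasing the count. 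Conversely, from a closed filtration $\emptyset=C_0\subseteq\cdots\subseteq C_n=X\times Y$ with $C_i-C_{i-1}$ sectionable, I would simply set $G_i:=C_i-C_{i-1}$; these are disjoint, cover $X\times Y$, and each is the difference of a closed set and a closed set, hence locally closed in $X\times Y$, hence locally compact because $X\times Y$ is (a locally closed subspace of a locally compact Hausdorff space is locally compact). Each $G_i$ admits a partial section by hypothesis, so the partition number is $\le n=\TC(f)$.

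The main obstacle I anticipate is the forward direction of the partition equivalence — turning an abstract disjoint partition into locally compact sectionable pieces into an \emph{ordered} increasing closed filtration without inflating the number of sets. The subtlety is that an arbitrary locally compact piece $G_i$ need not be locally closed in $X\times Y$ unless $X\times Y$ is Hausdorff (which one should assume, or at least note), and even granting local closedness one must verify that the unions $C_i=\bigcup_{j\le i}G_j$ can be chosen closed with $C_i-C_{i-1}=G_i$; this forces a careful choice of the ordering and possibly the use of the formula $\overline{G_i}\cap(X\times Y - \bigcup_{j<i}\overline{G_j})$ or an induction that adds one locally closed piece at a time to a growing closed set. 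I would handle this by first reducing, via the already-established closed-filtration characterization, to the purely point-set lemma: if $X\times Y$ is locally compact Hausdorff and is partitioned into $n$ locally compact (equivalently locally closed) subsets, then it admits an increasing closed filtration of length $n$ whose consecutive differences are exactly (subsets of) these pieces — and then the sectionability transfers because a partial section over a larger locally closed difference restricts from, or is assembled out of, the given ones. The remaining verifications (that evaluation maps are fibrations, that restrictions of sections are sections) are routine and already implicit in the setup.
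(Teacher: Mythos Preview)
Your treatment of the first assertion (the duality between open and closed increasing filtrations via complements) and of the direction ``closed filtration $\Rightarrow$ locally compact partition'' (setting $G_i:=C_i-C_{i-1}$ and using that a difference of closed sets in a locally compact space is locally compact) is correct and coincides with the paper's argument.

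The genuine gap is in the direction ``locally compact partition $\Rightarrow$ closed filtration''. You correctly identify this as the substantive step, but you never produce a construction, and the target you gesture at --- a closed filtration of length $n$ whose successive differences each lie inside a \emph{single} $G_j$ --- is in general unattainable. A small counterexample: partition $[0,2]$ as $G_1=[0,1)\cup\{2\}$ and $G_2=[1,2)$; both pieces are locally compact, yet there is no closed $C_1\subset[0,2]$ with $C_1$ contained in one $G_j$ and $[0,2]\setminus C_1$ contained in the other (the forced choices $C_1=G_1$ or $C_1=G_2$ are not closed). The paper's construction is different in kind: write each $G_i=A_i-B_i$ with $A_i,B_i$ closed, set $C_1:=\bigcup_i B_i$, and then $C_k:=C_{k-1}\cup A_{k-1}$ for $k\ge 2$. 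The point is that $C_1$ is allowed to meet \emph{every} $G_i$, but the pieces $C_1\cap G_i$ are mutually separated, so the given partial sections on the $G_i$ glue to a single continuous section on all of $C_1$; the later differences $C_k-C_{k-1}$ then sit inside $A_{k-1}-B_{k-1}=G_{k-1}$. Your phrase ``assembled out of the given ones'' hints at this gluing, but the mechanism that makes it work --- forcing separation by taking $C_1=\bigcup_i B_i$ --- is precisely the idea your proposal is missing.
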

\begin{proof}
The equivalence of the open and closed definitions follows immediately from De Morgan's Laws and the fact 
that the complement of an open set is a closed set.

As for the second claim, recall that since $X \times Y$ is locally compact, then a subset $G$ is locally compact 
if and only if $G = C_1 - C_2$ for some closed sets $C_1, C_2$. Therefore, given an increasing sequence 
$$\emptyset = C_0 \subseteq C_1 \subseteq \cdots \subseteq C_n = X \times Y$$
where $C_i - C_{i-1}$ \partsec{f}, then the sets $G_i = C_i - C_{i-1}$ are disjoint, locally compact, 
and each $G_i$ \partsec{f}.\\
To prove the converse, take a disjoint partition $X \times Y=G_1 \sqcup G_2 \sqcup \cdots \sqcup G_n$, where 
$G_i$ are locally compact and admit a partial section to $\pi_f$ and each $G_i$ as a difference $G_i = A_i - B_i$ 
of two closed sets. We can then define the following increasing sequence of closed sets:
$$C_1 = \bigcup_{i=1}^n B_i\ \ \ \text{and}\ \ \  C_i:= C_{i-1} \cup A_{i-1} \text{ for } i=2,\ldots, n.$$
Note that $C_1$ can also be expressed as
$C_1=\bigcup_{i=1}^n \left( \bigcup_{j=1}^n  G_i \cap B_j \right).$
Since $\bigcup_{j=1}^n G_i \cap B_j \subset G_i \subset A_i$, we see that the sets $\bigcup_{j=1}^n (G_i \cap B_j)$ are separated 
from one another and so $C_1$ \partsec{f}.\\
Furthermore, since $C_i - C_{i-1} \subset A_{i-1} - B_{i-1},$ we conclude that $C_i - C_{i-1}$ admit a partial section to $\pi_f$ 
for $i=1,\ldots,n$
\end{proof}

\begin{remark}
\label{rem:TC def}
Srinivasan \cite{Srinivasan} has recently proved that for $X$ a compact metric ANR one can equivalently define 
$\cat(X)$ 
by  partitioning $X$ into arbitrary categorical subsets. The proof is based on extensions 
of maps from a subset of $X$ to a suitably constructed open neighbourhood 
(cf. \cite[Corollary 2.8]{Srinivasan}).
Her approach can be extended to the case of topological complexity of a space, but the above examples 
show that even for very simple maps the choice of the domains for partial sections can greatly affect 
the outcome. We will return to this question in Section \ref{sec:Topological complexity of a fibration}.
\end{remark}

\section{Estimates of $\TC(f)$ for arbitrary maps}
\label{sec:general estimates}

From this point on we will assume that all spaces under consideration are metric absolute neighbourhood 
retracts (metric ANR's). As explained before, this will allow a direct comparison between the $\TC(f)$ and 
the category or topological complexity of its domain and codomain.
The following simple lemma will be particularly useful for the comparison of the topological complexity of related maps. 

\begin{lemma}
\label{Comparison_Lemma}
Let $f: X \rightarrow Y$ and $f': X' \rightarrow Y'$ be any maps, and suppose 
there exists a map $h\colon Y \to Y'$ with the following property: whenever $f'$ admits a partial section over some
$A \subseteq Y'$, $f$ admits a partial section over $h^{-1}(A)$ as depicted in
the following diagram:
$$\xymatrix{
& &X\ar[d]_f & & X' \ar[d]^{f'}\\
& &Y\ar[rr]^h & & Y'\\
h^{-1}(A) \ar@{^(->}[urr]\ar@{-->}[uurr]\ar[rr]^-h & & A \ar@{^(->}[urr]\ar[uurr]}
$$
Then $\sec(f) \leq \sec(f')$.
\end{lemma}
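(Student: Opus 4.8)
The plan is to pull back an optimal filtration of $Y'$ along $h$ and check that it remains an admissible filtration for $f$. First I would dispose of the trivial case: if $\sec(f')=\infty$ there is nothing to prove, so assume $\sec(f')=k$ is finite and choose an increasing sequence of open subsets
$$\emptyset = U_0 \subset U_1 \subset \cdots \subset U_k = Y'$$
such that each difference $U_i-U_{i-1}$ admits a partial section to $f'$. Set $V_i := h^{-1}(U_i)$ for $i=0,\ldots,k$.

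Next I would verify that $(V_i)_{i=0}^{k}$ is an admissible filtration of $Y$ for $f$. Each $V_i$ is open since $h$ is continuous and $U_i$ is open; the sequence is increasing because $h^{-1}$ preserves inclusions; $V_0 = h^{-1}(\emptyset)=\emptyset$; and $V_k = h^{-1}(Y')=Y$ because $h$ takes all of $Y$ into $Y'$. The key point is the set-theoretic identity $h^{-1}(A\setminus B)=h^{-1}(A)\setminus h^{-1}(B)$, which gives $V_i - V_{i-1} = h^{-1}(U_i)\setminus h^{-1}(U_{i-1}) = h^{-1}(U_i - U_{i-1})$.

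Then the hypothesis of the lemma applies level by level: since $U_i - U_{i-1}$ admits a partial section to $f'$, the map $f$ admits a partial section over $h^{-1}(U_i - U_{i-1}) = V_i - V_{i-1}$ for each $i=1,\ldots,k$. Hence the filtration $\emptyset = V_0 \subset V_1 \subset \cdots \subset V_k = Y$ witnesses $\sec(f)\le k = \sec(f')$.

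The argument is essentially formal and I do not expect a genuine obstacle; the only care needed is the bookkeeping at the two ends of the filtration (that $h^{-1}$ sends $\emptyset$ to $\emptyset$ and $Y'$ to $Y$) together with the commutation of preimages with set differences, which is precisely what lets the pulled-back filtration inherit the partial-section property at every stage.
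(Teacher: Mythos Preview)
Your proof is correct and follows essentially the same approach as the paper: pull back an optimal open filtration of $Y'$ along $h$, note that preimages are open, that $h^{-1}$ commutes with set differences, and apply the hypothesis on each stratum. The only (harmless) addition is your explicit treatment of the case $\sec(f')=\infty$.
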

\begin{proof}
Suppose that $\sec(f') = k$ and that 
$$U_0 \subset U_1 \subset \cdots \subset U_k = X' \times Y'$$
is an increasing sequence of open subsets where $f'$ admits a partial section over $U_i - U_{i-1}$ for every $1 \leq i \leq k$. Then $f$ admits a partial section over each $h^{-1}(U_i - U_{i-1})$  by hypothesis. 
Since $h$ is continuous, all $h^{-1}(U_i)$ are open and so
$$\emptyset = h^{-1}(U_0) \subset h^{-1}(U_1) \subset \cdots \subset h^{-1}(U_k) = Y$$
is an increasing sequence of open subsets where for every 
$1 \leq i \leq k$ the restriction of $f$ admits a continuous section over $h^{-1}(U_i) - h^{-1}(U_{i-1}) = h^{-1}(U_i - U_{i-1})$. We conclude that $\sec(f) \leq k = \sec(f')$.
\end{proof}

\begin{proposition}
\label{prop:TC and cat}
For any map $f: X \rightarrow Y$, we have 
$$\TC(f) \geq \cat(Y).$$
\end{proposition}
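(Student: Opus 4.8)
The plan is to deduce this from the Comparison Lemma (Lemma~\ref{Comparison_Lemma}), applied to the two maps $\ev_1\colon PY\to Y$ and $\pi_f\colon X^I\to X\times Y$; with these choices the desired inequality $\cat(Y)\le\TC(f)$ becomes exactly $\sec(\ev_1)\le\sec(\pi_f)$. Since $Y$ is path-connected and $\cat(Y)$ is a homotopy invariant, its value does not depend on the base-point chosen for $PY$, so I would take that base-point to be $y_0:=f(x_0)$, where $x_0\in X$ is the fixed base-point occurring in the definition of $\pi_f$.

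The comparison map I would use is $h\colon Y\to X\times Y$, $h(y):=(x_0,y)$, which is continuous and satisfies $h^{-1}(A)=\{y\in Y\mid (x_0,y)\in A\}$ for every $A\subseteq X\times Y$. To verify the hypothesis of Lemma~\ref{Comparison_Lemma}, suppose $\pi_f$ admits a continuous partial section $s\colon A\to X^I$ over some $A\subseteq X\times Y$, and define $\hat s\colon h^{-1}(A)\to Y^I$ by $\hat s(y):=f\circ s(x_0,y)$. From $\pi_f(s(x_0,y))=(x_0,y)$ one reads off $s(x_0,y)(0)=x_0$ and $f(s(x_0,y)(1))=y$; hence $\hat s(y)(0)=f(x_0)=y_0$, so $\hat s$ actually takes values in the based path space $PY$, while $\ev_1(\hat s(y))=\hat s(y)(1)=y$, so $\hat s$ is a partial section of $\ev_1\colon PY\to Y$ over $h^{-1}(A)$. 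Continuity of $\hat s$ follows from continuity of $s$ together with the standard fact that post-composition with $f$ induces a continuous map $X^I\to Y^I$ for the compact-open topology. Lemma~\ref{Comparison_Lemma} then yields $\cat(Y)=\sec(\ev_1)\le\sec(\pi_f)=\TC(f)$.

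This argument is essentially routine, and I do not anticipate a real obstacle; the only points that require a moment's care are ensuring the paths $\hat s(y)$ are genuinely based at $y_0$ (which dictates the choice $y_0=f(x_0)$ and uses base-point independence of $\cat$) and the continuity of $\alpha\mapsto f\circ\alpha$. If one prefers to avoid invoking the Comparison Lemma, the same idea can be carried out directly: starting from an open filtration $\emptyset=U_0\subset\cdots\subset U_k=X\times Y$ witnessing $\TC(f)=k$, the sets $h^{-1}(U_i)$ form an open filtration of $Y$ over whose successive differences the maps $\hat s_i$ built as above are partial sections of $\ev_1\colon PY\to Y$, so $\cat(Y)\le k$.
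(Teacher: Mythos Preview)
Your proof is correct and follows essentially the same approach as the paper: both fix $x_0\in X$, use the map $h(y)=(x_0,y)$, turn a partial section $s$ of $\pi_f$ into the partial section $y\mapsto f\circ s(x_0,y)$ of $\ev_1\colon PY\to Y$, and then invoke Lemma~\ref{Comparison_Lemma}. Your version is in fact slightly more explicit about the base-point issue and the continuity of post-composition, but the argument is the same.
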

\begin{proof}
Fix $x_0\in X$ and consider the inclusion $h\colon Y\hookrightarrow X\times Y$, given as $h(y):=(x_0,y)$. If $A\subseteq X\times Y$
admits a partial section $\sigma\colon A\to X^I$ to $\pi_f$, then one can easily check that 
$$\ev_1\circ f_*\circ\sigma\circ h =1_{h^{-1}(A)}, $$
where $f_*\colon P_{x_0}X\to P_{f(y_0)}Y$ denotes the post-composition by $f$. 
Therefore 
$$f_*\circ\sigma\circ h\colon h^{-1}(A)\to Y^I$$ 
is a partial section to the map $\ev_1\colon PY\to Y$ over $h^{-1}(A)$.
By Lemma \ref{Comparison_Lemma} we conclude that 
$$\TC(f)=\sec(\pi_f)\ge\sec(\ev_1)=\cat(Y).$$
\end{proof}

Another lower bound for $\TC(f)$ is given by the number of partial continuous 
sections of $f$. 

\begin{proposition}
\label{prop:TC and sec}
For any map $f: X \rightarrow Y$, we have 
$$\TC(f) \geq \sec(f).$$
In particular, if $\TC(f)=1$, then $f$ admits a continuous section.
\end{proposition}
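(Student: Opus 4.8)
The plan is to derive this from the Comparison Lemma (Lemma \ref{Comparison_Lemma}), applied with $\pi_f\colon X^I\to X\times Y$ in the role of the ``target'' map $f'$ and with $f\colon X\to Y$ itself in the role of the ``source'' map. To invoke the lemma I need a continuous map $h\colon Y\to X\times Y$ such that every partial section of $\pi_f$ over a subset $A\subseteq X\times Y$ yields a partial section of $f$ over $h^{-1}(A)$.

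The natural choice is the slice inclusion $h(y):=(x_0,y)$ for a fixed $x_0\in X$, so that $h^{-1}(A)$ is the set $\hat A=\{y\in Y\mid (x_0,y)\in A\}$ already appearing in the discussion before Proposition \ref{prop:alternative def}. Given a partial section $s\colon A\to X^I$ of $\pi_f$, the map $\hat s:=\ev_1\circ s\circ h$ is continuous as a composite of continuous maps, and since $\pi_f(s(x_0,y))=(x_0,y)$ forces $f\big(s(x_0,y)(1)\big)=y$, we get $f\circ\hat s=1_{\hat A}$; thus $\hat s$ is a continuous partial section of $f$ over $h^{-1}(A)$. This is exactly the hypothesis of Lemma \ref{Comparison_Lemma}, which then gives $\sec(f)\le\sec(\pi_f)=\TC(f)$. (This mirrors the proof of Proposition \ref{prop:TC and cat} almost verbatim, only composing with $\ev_1\colon X^I\to X$ in place of $f_*$.)

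For the last assertion, note that since $Y\neq\emptyset$ any filtration witnessing $\sec(f)$ has length at least one, so $\sec(f)\ge 1$ unconditionally; combined with $\sec(f)\le\TC(f)=1$ this forces $\sec(f)=1$, and a filtration $\emptyset=U_0\subset U_1=Y$ whose single stratum $U_1-U_0=Y$ admits a partial section of $f$ is precisely a global continuous section of $f$.

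I do not anticipate a genuine obstacle here: the substantive work is contained in Lemma \ref{Comparison_Lemma} and in the slice construction of $\hat s$. The only points demanding a line of care are verifying that $\hat s=\ev_1\circ s\circ h$ is both continuous and a section, which is immediate from $\pi_f=(\ev_0,f\circ\ev_1)$, and recording that $\sec(f)\ge 1$ so that the ``in particular'' clause is not vacuous.
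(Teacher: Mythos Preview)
Your argument is correct and essentially identical to the paper's own proof: the paper also fixes $x_0$, takes $h(y)=(x_0,y)$, observes that $f\circ\ev_1\circ\sigma\circ h=1_{h^{-1}(A)}$ so that $\ev_1\circ\sigma\circ h$ is a partial section of $f$, and then invokes Lemma~\ref{Comparison_Lemma}. Your only addition is spelling out the ``in particular'' clause via $\sec(f)\ge 1$, which the paper leaves implicit.
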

\begin{proof}
Fix $x_0\in X$ and define $h\colon Y\to X\times Y$ as in the previous proof. 
If $\sigma\colon A\to Y^I$ is a partial section to $\pi_f$ then 
$$f\circ\ev_1\circ \sigma \circ h=1_{h^{-1}(A)},$$
therefore  $\ev_1\circ\sigma\circ h\colon h^{-1}(A)\to X$ is a partial section
to $f$. By Lemma \ref{Comparison_Lemma} $\TC(f)=\sec(\pi_f)\ge\sec(f)$.
\end{proof}

Observe that if $f$ is a fibration, then $\sec(f)\le\cat(Y)$, because $f$ admits a partial section over every categorical subset of $Y$. Therefore, for fibrations
Proposition \ref{prop:TC and cat} implies Proposition \ref{prop:TC and sec}.

Before proceeding let us introduce the following notation.
Given a homotopy $H: X \times I \rightarrow Y$, we can use adjunction
to define continuous functions 
$\overrightarrow{H}, \overleftarrow{H}\colon X\to Y^I,$
by the formulas  
$$\overrightarrow{H}(x)(t) := H(x,t)\ \ \text{and}\ \ 
\overleftarrow{H}(x)(t) := H(x,1-t).$$

\begin{proposition}
\label{prop:TC and catX}
If there exists $y_0\in Y$ such that the fibre $f^{-1}(y_0)$ of the map 
$f: X \rightarrow Y$ is categorical in $X$, then 
$$\TC(f) \geq \cat(X).$$
\end{proposition}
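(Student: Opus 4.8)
The plan is to use Lemma~\ref{Comparison_Lemma} again, this time with a cleverly chosen map $h\colon X\to X\times Y$ so that partial sections of $\pi_f$ pull back to partial sections of $\ev_1\colon PX\to X$, the map whose sectional number is $\cat(X)$. The obvious first guess $h(x)=(x,f(x))$ does not work directly because evaluating a partial section of $\pi_f$ at $x$ gives a path in $X$ ending somewhere in the fibre $f^{-1}(f(x))$, not at a fixed basepoint, so it is not a path in $PX$. The fix is to exploit the hypothesis that $F:=f^{-1}(y_0)$ is categorical in $X$: choose a path-homotopy $H\colon F\times I\to X$ contracting $F$ (within $X$) to the basepoint $x_0$, i.e.\ $H(\cdot,0)=\id_F$ and $H(\cdot,1)\equiv x_0$.

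First I would fix $y_0$ and $x_0$ as above and set $h\colon X\to X\times Y$, $h(x):=(x,y_0)$; note $h$ is just the slice inclusion and is continuous, and $h^{-1}(A)=\{x\in X\mid (x,y_0)\in A\}$ for any $A\subseteq X\times Y$. Given a partial section $\sigma\colon A\to X^I$ of $\pi_f$ over $A$, for $x\in h^{-1}(A)$ the path $\sigma(x,y_0)$ starts at $x$ and ends at a point $\sigma(x,y_0)(1)\in f^{-1}(y_0)=F$. The next step is to concatenate this path with the contracting homotopy: define $\tau(x)\in X^I$ to be the reverse of the path $t\mapsto H\!\bigl(\sigma(x,y_0)(1),t\bigr)$ followed by the reverse of $\sigma(x,y_0)$; more precisely, using the $\overleftarrow{(\cdot)}$ notation, $\tau(x)$ should be a path from $x_0$ to $x$ obtained by first running $\overleftarrow{H}$ applied to the endpoint $\sigma(x,y_0)(1)$ (this goes from $x_0$ to that endpoint) and then running $\overleftarrow{\sigma(x,y_0)}$ is wrong in orientation — I would instead go from $x_0$ along $\overleftarrow{H}(\sigma(x,y_0)(1))$ to $\sigma(x,y_0)(1)$, then backwards along $\sigma(x,y_0)$ to $x$. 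This $\tau(x)$ is a based path (based at $x_0$), so $\tau(x)\in PX$, and $\ev_1(\tau(x))=x$. Hence $\tau\colon h^{-1}(A)\to PX$ is a partial section of $\ev_1\colon PX\to X$ over $h^{-1}(A)$.

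The only thing left is continuity of $\tau$, which is routine: $\sigma$ is continuous, $\ev_1$ is continuous, $H$ is continuous, and path concatenation and reversal are continuous operations on path spaces of metric ANR's; the composite is therefore continuous. Having established that $f'=\pi_f$, $f=\ev_1\colon PX\to X$, and $h$ satisfy the hypothesis of Lemma~\ref{Comparison_Lemma} — whenever $\pi_f$ admits a partial section over $A\subseteq X\times Y$, $\ev_1$ admits one over $h^{-1}(A)$ — we immediately get $\cat(X)=\sec(\ev_1\colon PX\to X)\le\sec(\pi_f)=\TC(f)$.

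The main obstacle I anticipate is purely bookkeeping: getting the orientations and the reparametrisations in the concatenation of $\overleftarrow{H}$ with the reversed $\sigma$-path to line up so that the result is genuinely based at $x_0$, ends at $x$, and depends continuously on $x$ on the possibly non-open set $h^{-1}(A)$. There is no deep difficulty — the categorical-fibre hypothesis is exactly what supplies the contracting homotopy $H$ needed to turn an endpoint-in-$F$ path into a based path — but the explicit formula for $\tau$ must be written carefully, and one should double-check that the base-point conventions in the definition of $PX$ (paths starting at $x_0$) match the construction.
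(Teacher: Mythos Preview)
Your proposal is correct and follows essentially the same approach as the paper: both use the slice map $h(x)=(x,y_0)$ and invoke Lemma~\ref{Comparison_Lemma}, producing from a partial section $\sigma$ of $\pi_f$ a contraction of $h^{-1}(A)$ by concatenating $\sigma(x,y_0)$ with the categorical homotopy $H$ applied to its endpoint. The paper writes the resulting map more tersely as $\overrightarrow{H}\circ\ev_1\circ\sigma\circ h$ and says it ``determines a deformation of $h^{-1}(A)$ to a point'', implicitly leaving to the reader the concatenation with $\sigma$ itself; your version spells out the full based path $\overleftarrow{H}(\sigma(x,y_0)(1))\cdot\overline{\sigma(x,y_0)}$ explicitly, which is arguably cleaner and resolves exactly the orientation bookkeeping you flagged.
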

\begin{proof}
Define $h\colon X\to X\times Y$ by $h(x):=(x,y_0)$. 
By assumption, there exists a homotopy $H\colon f^{-1}(y_0)\times I\to X$
which deforms $f^{-1}(y_0)$ to a point. 
If $\sigma\colon A\to X^I$ is a partial section to $\pi_f$, then
it is easy to verify that the map 
$$\overrightarrow{H}\circ \ev_1\circ \sigma\circ h$$
determines a deformation of $h^{-1}(A)\subseteq X$ to a point in $X$. 
As before, by Lemma \ref{Comparison_Lemma} we conclude that $\TC(f)\ge\cat(X)$. 
\end{proof}

\subsection{Effect of pre-composition on the complexity}

Our next objective is to study the effect that pre-composition by
a map has on the complexity of $f$.

\begin{theorem}
\label{thm:precomp}
Consider the diagram $\widehat{X} \xrightarrow{v} X \xrightarrow{f} Y$.
\begin{enumerate}
\item[\textbf{a)}] If $v$ admits a right homotopy inverse (i.e., a map 
$u\colon Y\to X$, such that $vu \simeq 1$), then $\TC(fv) \geq \TC(f)$
\item[\textbf{b)}] If $v$ admits a left homotopy inverse (a map $u$ 
such that $uv \simeq 1$) and if $fvu = f$, then $\TC(fv) \leq \TC(f)$.
\item[\textbf{c)}] If $v$ admits a left homotopy inverse $u$,  if $fvu \simeq f$ and if additionally $fv$ is a fibration, then $\TC(fv) \leq \TC(f)$
\end{enumerate}
\end{theorem}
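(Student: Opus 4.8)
The three parts should all follow from the Comparison Lemma (Lemma \ref{Comparison_Lemma}), used with a suitable auxiliary map $h$ between the codomains of the relevant path-space fibrations. Recall $\pi_f\colon X^I\to X\times Y$ and $\pi_{fv}\colon \widehat X^I\to \widehat X\times Y$. For part \textbf{a)}, I would take $h:=u\times 1_Y\colon \widehat X\times Y\to X\times Y$ (where $u$ is the right homotopy inverse of $v$, so $vu\simeq 1_X$) and show that a partial section of $\pi_f$ over $A$ pulls back to a partial section of $\pi_{fv}$ over $h^{-1}(A)$; then Lemma \ref{Comparison_Lemma} gives $\TC(fv)\le\TC(f)$. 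Wait — the claimed inequality in \textbf{a)} is the reverse, $\TC(fv)\ge\TC(f)$, so here I should instead run the argument the other way: take $h:=v\times 1_Y\colon X\times Y\to\widehat X\times Y$ and show that a partial section $\sigma\colon A\to\widehat X^I$ of $\pi_{fv}$ produces a partial section of $\pi_f$ over $h^{-1}(A)=(v\times 1)^{-1}(A)$ by post-composing paths with $v$, i.e. $\alpha\mapsto v\circ\alpha$; one must check $\ev_0(v\alpha)=v\alpha(0)$ and $f(\ev_1(v\alpha))=fv(\alpha(1))$, which matches the required endpoint data. The homotopy $vu\simeq 1$ is not actually needed for this direction of \textbf{a)} as I have set it up, so I would re-examine whether \textbf{a)} is perhaps meant with the hypothesis giving a section of $\pi_f$ covering more sets; more likely the intended proof of \textbf{a)} uses $h = u\times 1$ together with the homotopy $vu\simeq 1$ to transport a partial section of $\pi_f$ to one of $\pi_{fv}$ in the $\ge$ direction, concatenating the section-path in $\widehat X$ (obtained via a lift of the homotopy $vu\simeq 1$, using that we are in the ANR/fibration setting) with $\sigma(u x, y)$. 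I expect the bookkeeping of these concatenations to be the first genuine obstacle.

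For part \textbf{b)}, with $u$ a left homotopy inverse of $v$ ($uv\simeq 1_{\widehat X}$) and the strict identity $fvu=f$, I would again apply Lemma \ref{Comparison_Lemma}, now with $h:=u\times 1_Y\colon X\times Y\to \widehat X\times Y$, and produce from a partial section $\sigma\colon A\to\widehat X^I$ of $\pi_{fv}$ a partial section of $\pi_f$ over $h^{-1}(A)$. Given $(x,y)$ with $(ux,y)\in A$, the path $\sigma(ux,y)$ runs in $\widehat X$ from $ux$ to some point mapped by $fv$ to $y$; pushing it forward by $v$ gives a path in $X$ from $vux$ to a point mapped by $fvu\ (=f$, wait — the endpoint is $v$ of something, and $f$ applied to it equals $fv(\text{endpoint of }\sigma)=y$, good). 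But the starting point is $vux$, not $x$. So I must prepend a path from $x$ to $vux$; this is where a chosen homotopy $K\colon vu\simeq 1_X$ — which does exist, since $vu$ is homotopic to $v\circ(\text{left inverse})$... actually $vu\simeq 1_X$ is not among the hypotheses of \textbf{b)}. The hypotheses give $uv\simeq 1_{\widehat X}$ and $fvu=f$; composing the first with $v$ on the left and $u$ on the right is not obviously enough. I suspect the intended reading is that $u$ is a genuine two-sided-up-to-homotopy inverse on the relevant nose, or that $fvu=f$ is precisely the substitute that lets the forward-pushed path have the correct $f$-image without needing to fix the starting point, in which case one reparametrizes so that the domain of the section is $(v\times 1)^{-1}$ of something instead. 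I would sort out the correct $h$ by writing down exactly what endpoint conditions $\pi_f$ demands and reverse-engineering $h$; pinning this down is the crux of \textbf{b)}.

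For part \textbf{c)}, the hypotheses weaken $fvu=f$ to $fvu\simeq f$ but add that $fv$ is a fibration. Here I would imitate \textbf{b)} but absorb the homotopy $fvu\simeq f$ using the homotopy lifting property of the fibration $fv$: a partial section of $\pi_f$ over $A$, pushed through as in \textbf{b)}, lands a path whose $f$-image is only homotopic to (not equal to) the prescribed $y$; using the HLP of $fv\colon\widehat X\to Y$ I can correct the terminal portion of the path so that its $fv$-image hits $y$ on the nose, continuously in the parameter, thereby repairing the section. Then Lemma \ref{Comparison_Lemma} again yields $\TC(fv)\le\TC(f)$. The main obstacle throughout is getting the several path-concatenations and homotopy-lifts to vary continuously over the (locally closed, not open) pieces $U_i-U_{i-1}$ and to be compatible with the strictness requirements of $\pi_f$; once the right $h$ is identified in each case, the verifications are the kind of adjunction/concatenation computations the paper elsewhere dispatches with ``it is easy to verify.''
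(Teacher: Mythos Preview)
Your plan has the right ingredients — the Comparison Lemma, concatenation with the given homotopies, and in \textbf{c)} a correction via the lifting function of $fv$ — but in \textbf{a)} and especially \textbf{b)} you reverse the direction of transport, and in \textbf{b)} this is fatal. To show $\sec(p)\le\sec(p')$ via Lemma~\ref{Comparison_Lemma} you must start from a partial section of $p'$ and manufacture one of $p$. For \textbf{b)} (the claim $\TC(fv)\le\TC(f)$) this means beginning with a section $\sigma\colon A\to X^I$ of $\pi_f$, not of $\pi_{fv}$ as you do; the comparison map is $h=v\times 1\colon\widehat X\times Y\to X\times Y$, and for $(\hat x,y)$ with $(v(\hat x),y)\in A$ one sets
\[
\alpha_{fv}(\hat x,y):=\overleftarrow{H}(\hat x)\cdot\bigl(u\circ\sigma(v(\hat x),y)\bigr),\qquad H\colon uv\simeq 1_{\widehat X}.
\]
This path in $\widehat X$ runs from $\hat x$ to $u(x')$ with $f(x')=y$, and the strict hypothesis $fvu=f$ gives $fv(u(x'))=y$. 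No homotopy $vu\simeq 1_X$ is ever required; that apparent obstruction surfaced only because you were building a $\pi_f$-section out of a $\pi_{fv}$-section, i.e.\ proving the opposite inequality.

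Part \textbf{a)} is the mirror image: the comparison map is $u\times 1\colon X\times Y\to\widehat X\times Y$, one starts from a section $\sigma$ of $\pi_{fv}$, and defines $\alpha_f(x,y):=\overleftarrow{H}(x)\cdot\bigl(v\circ\sigma(u(x),y)\bigr)$ with $H\colon vu\simeq 1_X$. No lift is involved — the homotopy $H$ already takes values in $X$ — so your appeal to ``the ANR/fibration setting'' is unnecessary, and your earlier remark that the homotopy $vu\simeq 1$ ``is not actually needed'' is wrong: it supplies exactly the bridge from $x$ to $vu(x)$. Your sketch for \textbf{c)} is then correct once anchored to the corrected \textbf{b)}: the same formula now ends at $u(x')$ with $fv(u(x'))$ only homotopic to $y$ via $K\colon fvu\simeq f$, and one appends $\Gamma_{fv}\bigl(u(x'),\overrightarrow{K}(x')\bigr)$ to land exactly on $y$. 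This is precisely the paper's argument.
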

\begin{proof}
\begin{enumerate}
\item[\textbf{a)}] Suppose $A \subset \widehat X \times Y$ \partsec{fv}, say $\alpha_{fv}\colon  A \rightarrow \widehat X^I$ and $H\colon  vu \simeq 1$. Then the formula
$$\alpha_f(y,z) := \overleftarrow{H}(y) \cdot (v \circ \alpha_{fv}(u(y),z))$$
defines a continuous partial section on $(u \times 1)^{-1}(A)$. Since $(u\times 1)\colon X \times Y \rightarrow \widehat X \times Y$ is continuous, then  $\TC(f) \leq \TC(fv)$ by Lemma \ref{Comparison_Lemma}.
\item[\textbf{b)}] Suppose $A \subset X \times Y$ \partsec{f}, say $\alpha_{f} : A \rightarrow X^I$. Let $H : uv \simeq 1$. Then the formula
$$\alpha_{fv}(x,z) := \overleftarrow{H}(x) \cdot (u \circ \alpha_f(v(x),z))$$
defines a continuous map on $(v \times 1)^{-1}(A)$. Observe that $\alpha_{fv}(x,z)$ is path starting at $x$ and ending at $u(y')$ where $f(y')=z$. Thus $fv \circ \alpha_{fv}(x,z)$ ends at $fv(u(y'))=f(y')=z$. Therefore $\alpha_{fv}$ is a continuous partial section for $\pi_{fv}$. Again, $(v \times 1): \widehat X \times Y \rightarrow X \times Y$ is continuous, so, by \ref{Comparison_Lemma}, $\TC(fv) \leq \TC(f)$.
\item[\textbf{c)}] Suppose $A \subset X \times Y$ \partsec{f}, say $\alpha_{f} : A \rightarrow X^I$. Let $H : uv \simeq 1$ and $K : fvu \simeq f$. Let $\Gamma_{fv} : \widehat X \sqcap Y^I\to X^I$ denote the lifting function for the fibration $fv$. Then the formula
$$\alpha_{fv}(x,z) := \overleftarrow{H}(x) \cdot (u \circ \alpha_f (v(x), z)) \cdot \Gamma_{fv}(u(x'),\overrightarrow{K}(x')),$$
where $x' = \alpha_f(v(x),z)(1)$, defines a continuous partial section for $(v \times 1)^{-1}(A)$. Thus by \ref{Comparison_Lemma} $\TC(fv) \leq \TC(f)$.
\end{enumerate}
\end{proof}

Furthermore, we have the following surprising result  that the complexity of a map cannot increase 
if we pre-compose it with a fibration.

\begin{theorem}
\label{thm:precomp fibration}
If $v\colon X\to Y$ is a fibration, then $\TC(fv)\le\TC(f)$ for every $f\colon X\to Y$.
\end{theorem}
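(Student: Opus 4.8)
The plan is to invoke the Comparison Lemma \ref{Comparison_Lemma}, just as in Theorem \ref{thm:precomp}, except that now the fibration property of $v$ will supply a \emph{lifting function} in place of a homotopy inverse. Write the relevant diagram as $\widehat X\xrightarrow{\,v\,}X\xrightarrow{\,f\,}Y$ with $v$ the fibration, and take $h:=v\times 1_Y\colon\widehat X\times Y\to X\times Y$. To conclude $\TC(fv)=\sec(\pi_{fv})\le\sec(\pi_f)=\TC(f)$ it then suffices to verify the hypothesis of the lemma for $\pi_{fv}$, $\pi_f$ and $h$, namely that whenever $\pi_f\colon X^I\to X\times Y$ admits a partial section over a subset $A\subseteq X\times Y$, the map $\pi_{fv}\colon\widehat X^I\to\widehat X\times Y$ admits a partial section over $(v\times 1)^{-1}(A)$.

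For the construction, let $\Gamma_v\colon\widehat X\sqcap X^I\to\widehat X^I$ be a lifting function for the fibration $v$, so that $\Gamma_v(\widehat x,\gamma)(0)=\widehat x$ and $v\circ\Gamma_v(\widehat x,\gamma)=\gamma$ for every $(\widehat x,\gamma)$ with $v(\widehat x)=\gamma(0)$. Given a partial section $\alpha_f\colon A\to X^I$ of $\pi_f$ and a point $(\widehat x,z)\in(v\times 1)^{-1}(A)$, set $\gamma:=\alpha_f(v(\widehat x),z)$; since $\alpha_f$ is a section of $\pi_f$ we have $\gamma(0)=v(\widehat x)$, so $(\widehat x,\gamma)$ lies in the domain of $\Gamma_v$ and one may define
$$\alpha_{fv}(\widehat x,z):=\Gamma_v\big(\widehat x,\,\alpha_f(v(\widehat x),z)\big).$$
This path in $\widehat X$ starts at $\widehat x$, and if $\widehat x_1$ denotes its endpoint then $v(\widehat x_1)=\gamma(1)$; since $f(\gamma(1))=z$ (again because $\alpha_f$ is a section of $\pi_f$), we get $fv(\widehat x_1)=z$, hence $\pi_{fv}(\alpha_{fv}(\widehat x,z))=(\widehat x,z)$. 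Continuity of $\alpha_{fv}$ is immediate: it is the composite of the continuous map $(\widehat x,z)\mapsto(\widehat x,\alpha_f(v(\widehat x),z))$ with $\Gamma_v$.

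This completes the verification of the hypothesis of Lemma \ref{Comparison_Lemma}, and the inequality $\TC(fv)\le\TC(f)$ follows. The point worth emphasizing — and the only place where care is needed — is that, in contrast with parts (b) and (c) of Theorem \ref{thm:precomp}, no assumption whatsoever is placed on $f$: the fibration property of $v$ alone provides a \emph{continuous} choice of lifts via $\Gamma_v$, which is exactly what the definition of $\alpha_{fv}$ requires. If one prefers not to take the existence of a lifting function as given, the same continuity can be extracted directly from the homotopy lifting property applied with test space $(v\times 1)^{-1}(A)$, but the lifting-function formulation (already employed in the proof of Theorem \ref{thm:precomp}(c)) renders this detour unnecessary.
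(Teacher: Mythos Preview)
Your proof is correct and follows essentially the same route as the paper: both use the lifting function $\Gamma_v$ of the fibration $v$ to define $\alpha_{fv}(\widehat x,z):=\Gamma_v(\widehat x,\alpha_f(v(\widehat x),z))$ and then appeal to Lemma~\ref{Comparison_Lemma}. Your write-up is in fact more careful than the paper's (which contains several notational slips), as you explicitly verify both coordinates of $\pi_{fv}\circ\alpha_{fv}$ and justify continuity.
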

\begin{proof}
Let $\alpha_f\colon A\to Y^I$ be a partial section for $\pi_f\colon PY\to Y\times Z$ over some $A\subseteq Y\times Z$. 
Then the formula 
$$\alpha_{fv}(x,y):=\Gamma_v(x,\alpha_f(v(x),z))$$
defines a partial section for $\pi_{fv}$ over $(v\times 1)^{-1}(A)$. As usual, this implies that $\TC(fv)\le\TC(f)$, 
\end{proof}

The above theorems have several interesting corollaries. First, we deduce the following important invariance property, 
which states that the complexity of the map is not altered 
by a deformation retraction of the domain. 

\begin{corollary}
\label{cor:DR}
If $v\colon \widehat{X}\to X$ is a deformation retraction, then for every 
$f\colon X\to Y$ we have
$\TC(f)=\TC(fv)$.
\end{corollary}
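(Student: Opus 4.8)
The plan is to deduce this directly from the two preceding theorems by exploiting the fact that a deformation retraction $v\colon\widehat X\to X$ has a right homotopy inverse that is \emph{simultaneously} a left homotopy inverse in a way compatible with $f$. Concretely, let $u\colon X\to\widehat X$ be the inclusion of $X$ into $\widehat X$ (or whatever section of $v$ the deformation retraction provides), so that $vu=\id_X$ on the nose and $uv\simeq\id_{\widehat X}$ via the deformation retraction homotopy $H$. Then $fvu=f\,\id_X=f$ exactly.

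First I would establish the inequality $\TC(fv)\le\TC(f)$. This is precisely part \textbf{b)} of Theorem \ref{thm:precomp}: we have a left homotopy inverse $u$ of $v$ with $uv\simeq 1$, and the compatibility condition $fvu=f$ holds since $vu=\id_X$. Hence $\TC(fv)\le\TC(f)$ with no further work.

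Next I would establish the reverse inequality $\TC(fv)\ge\TC(f)$. For this one applies part \textbf{a)} of Theorem \ref{thm:precomp}: the map $v$ admits a right homotopy inverse, namely $u$, since $vu=\id_X\simeq 1$. That theorem then yields $\TC(fv)\ge\TC(f)$ directly. Combining the two inequalities gives $\TC(f)=\TC(fv)$.

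I do not expect any genuine obstacle here, since both halves are immediate instances of Theorem \ref{thm:precomp}; the only point requiring a moment's care is to name explicitly the section $u$ of the deformation retraction and to observe that it serves simultaneously as the right homotopy inverse needed in \textbf{a)} and as the left homotopy inverse (with the strict compatibility $fvu=f$) needed in \textbf{b)}. Alternatively, one could note that $v$ being a deformation retraction is in particular a (trivial) fibration with contractible fibres, so that $\TC(fv)\le\TC(f)$ also follows from Theorem \ref{thm:precomp fibration}, but invoking Theorem \ref{thm:precomp} for both bounds keeps the argument uniform.
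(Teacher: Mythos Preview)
Your argument is correct and is essentially identical to the paper's own proof: the paper also takes the inclusion $i\colon X\hookrightarrow\widehat X$ with $vi=1_X$ and $iv\simeq 1_{\widehat X}$, then applies Theorem~\ref{thm:precomp}(a) for $\TC(fv)\ge\TC(f)$ and Theorem~\ref{thm:precomp}(b) (using $fvi=f$) for $\TC(fv)\le\TC(f)$. One small caveat on your parenthetical remark: a deformation retraction need not be a fibration in general, so invoking Theorem~\ref{thm:precomp fibration} as an alternative for the inequality $\TC(fv)\le\TC(f)$ would require an additional hypothesis.
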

\begin{proof}
Let $i\colon X\hookrightarrow \widehat{X}$ be the inclusion, so that $vi=1_X$ 
and $iv\simeq 1_{\widehat X}$. Then Theorem \ref{thm:precomp}(a) implies 
that $\TC(fv)\ge\TC(f)$, while statement (b) and the observation
that $fhi=f$ gives $TC(fh)\le\TC(f)$.
\end{proof}

It is important to keep in mind that the deformation retraction in the statement of the above Corollary cannot be replaced by an arbitrary homotopy equivalence. 
For example, the identity map $1_{[0,2]}$  and the map $f$ depicted in Figure \ref{fig:TC>1} have homotopy equivalent 
domains, and yet the complexity of $f$ is $\TC(f)=2$, while $\TC(1_{[0,2]})=1$. The problem is that a homotopy equivalence $u$
between the domains cannot be chosen so to be a fibrewise map over the base $[0,2]$, i.e. so that the following 
diagram strictly commutes: 
$$\xymatrix{
[0,3] \ar[rr]^u\ar[dr]_f & & [0,2] \ar[dl]^{1_{[0,2]}} \\
& [0,2]
}$$

Nevertheless, if $f\colon X\to Y$ is a homotopy equivalence, then Corollary \ref{cor:fib & sec}(b) bellow applies so we have $\TC(f)\ge\TC(X)=\TC(Y)$.

\begin{corollary}
\label{cor:fib & sec}
\begin{enumerate}
\item[\textbf{a)}] 
If $f\colon X\to Y$ is a fibration, then $\TC(f)\le TC(Y)$.
\item[\textbf{b)}]
If $f\colon X\to Y$ admits a homotopy section, then $\TC(f)\ge TC(Y)$.
\item[\textbf{c)}]
If $f\colon X\to Y$ is a fibration that admits a homotopy section, 
then $\TC(f)=TC(Y)$.
\end{enumerate}
\end{corollary}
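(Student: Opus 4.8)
\textbf{Proof proposal for Corollary \ref{cor:fib & sec}.}

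The plan is to treat the three statements as consequences of the machinery already assembled in this section, using $\id_X$ as a comparison map throughout. The key observation tying everything together is that $\TC(\id_Y)=\TC(Y)$ by definition, so to relate $\TC(f)$ to $\TC(Y)$ it suffices to compare $f$ with the identity of $Y$ via the pre-composition and fibration results.

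For part (a), I would factor $\id_Y = f \circ s$ only in the homotopy sense — but more directly, observe that since $f$ is a fibration we are exactly in the situation of Theorem \ref{thm:precomp fibration}: taking $v = f\colon X\to Y$ as the fibration and applying that theorem to the map $\id_Y\colon Y\to Y$ gives $\TC(\id_Y\circ f)\le\TC(\id_Y)$, i.e. $\TC(f)\le\TC(Y)$. So (a) is immediate once one notices that $f$ itself plays the role of the fibration $v$ and $\id_Y$ plays the role of $f$ in that theorem.

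For part (b), suppose $f$ admits a homotopy section, i.e. a map $s\colon Y\to X$ with $f s\simeq 1_Y$. I would like to invoke Theorem \ref{thm:precomp}(a) with $v=f$: the hypothesis there is precisely that $v$ has a right homotopy inverse, which is our $s$ (since $f s\simeq 1$). Applying \ref{thm:precomp}(a) to the composite $Y\xrightarrow{f}X\xrightarrow{\id_X}X$ — wait, the types must match: \ref{thm:precomp}(a) concerns $\widehat X\xrightarrow{v}X\xrightarrow{g}Y$ and concludes $\TC(gv)\ge\TC(g)$. Set $\widehat X = Y$, $v = f\colon Y\to X$, $g = \id_X\colon X\to X$ — but then $Y$ in the theorem's conclusion would have to be $X$, giving $\TC(\id_X\circ f)\ge\TC(\id_X)$, which is $\TC(f)\ge\TC(X)$, not $\TC(Y)$. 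That is the wrong bound. The correct route: since $f s\simeq 1_Y$, the map $s$ has $f$ as a left homotopy inverse, so apply \ref{thm:precomp}(a) with the roles $\widehat X\rightsquigarrow Y$, $v\rightsquigarrow s$, $f\rightsquigarrow f$, giving the composite $Y\xrightarrow{s}X\xrightarrow{f}Y$; since $s$ has right homotopy inverse $f$ (as $s f$... no, we only know $fs\simeq 1$). The hypothesis of (a) is $vu\simeq 1$ for some $u$; here $v=s$, and we need $su\simeq 1$, which we do not have. So instead I expect the clean argument is: apply Corollary \ref{cor:DR} or Theorem \ref{thm:precomp}(b) is not available either. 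The honest path is to argue directly that a homotopy section $s$ of $f$ allows one to convert a partial section of $\pi_f$ over $A\subseteq X\times Y$ into one of $\pi_{\id_Y}=\pi_Y$ over $(s\times 1)^{-1}(A)\subseteq Y\times Y$, by composing with $f$ and using the homotopy $fs\simeq 1$ to adjust the endpoint — this is the same style of path-concatenation argument as in the proof of Theorem \ref{thm:precomp}, and then Lemma \ref{Comparison_Lemma} yields $\TC(Y)\le\TC(f)$.

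For part (c), combine (a) and (b): the fibration hypothesis gives $\TC(f)\le\TC(Y)$ by (a), and a fibration admitting a homotopy section admits an honest section (standard lifting), hence in particular a homotopy section, so (b) gives $\TC(f)\ge\TC(Y)$; equality follows. The main obstacle I anticipate is getting the direction of the homotopy inverses exactly right in part (b) — a homotopy section of $f$ is a \emph{right} homotopy inverse of $f$, and one must be careful that this is the hypothesis matching the inequality $\TC(f)\ge\TC(Y)$ rather than $\TC(f)\ge\TC(X)$; the concatenation formula must concatenate the section path $\alpha_f(s(y),z)$ in $X$, push it down by $f$ into $Y$, and then glue on the homotopy track $\overleftarrow{H}$ of $fs\simeq 1_Y$ at the appropriate end so that the resulting path in $Y$ starts at $y$ and ends at $z$, exactly as in Theorem \ref{thm:precomp}(a).
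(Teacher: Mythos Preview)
Your treatment of parts (a) and (c) is correct and matches the paper exactly: both use the factorisation $f = 1_Y\circ f$ and invoke Theorem~\ref{thm:precomp fibration} for (a), then combine (a) and (b) for (c). (The remark in (c) about upgrading a homotopy section to an honest one is unnecessary: (b) only asks for a homotopy section.)

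For part (b), however, you talk yourself out of the one-line argument the paper uses. Your very first instinct---``invoke Theorem~\ref{thm:precomp}(a) with $v=f$''---is exactly right; the mistake is in choosing the second map. In Theorem~\ref{thm:precomp}(a) the diagram is $\widehat X\xrightarrow{v}X\xrightarrow{g}Y$, and you should take $\widehat X=X$, the middle space to be $Y$, $v=f\colon X\to Y$, and $g=1_Y\colon Y\to Y$ (not $1_X$). The hypothesis ``$v$ admits a right homotopy inverse'' is precisely $fs\simeq 1_Y$, and the conclusion $\TC(gv)\ge\TC(g)$ reads $\TC(1_Y\circ f)\ge\TC(1_Y)$, i.e.\ $\TC(f)\ge\TC(Y)$. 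That is the paper's entire proof of (b). Your detours through $1_X$ and through $v=s$ both fail for the type reasons you noticed, but the fix is simply to put $1_Y$ on the right.

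Your eventual ``honest path''---pushing $\alpha_f(s(y),z)$ down by $f$ and prepending the track $\overleftarrow{H}(y)$ of $fs\simeq 1_Y$---is correct, but it is literally the formula from the proof of Theorem~\ref{thm:precomp}(a) specialised to this situation, so you are re-deriving the theorem rather than citing it.
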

\begin{proof}
Consider the following diagram:
$$\xymatrix{
X \ar[r]^f & Y \ar@{=}[r]^{1_Y} & Y
}$$
If $f$ is a fibration, then by Theorem \ref{thm:precomp fibration}
$$\TC(f)=\TC(1_Y\circ f)\le \TC(1_Y)=\TC(Y).$$
On the other hand, if $f$ admits a homotopy section $s\colon Y\to X$, then by 
Theorem \ref{thm:precomp}(a) 
$$\TC(f)=\TC(1_Y\circ f)\ge \TC(1_Y)=\TC(Y).$$
By putting together (a) and (b) we get (c).
\end{proof}

\subsection{Invariance with respect to homotopy}

Recall that two maps $f\colon X\to Y$ and $f'\colon X'\to Y$ are said to be \emph{fibre homotopy equivalent} (or FHE-equivalent) if there is a commutative diagram diagram of the form 
$$\xymatrix{
X \ar@<0.7ex>[rr]^u \ar[dr]_f & & X' \ar@<0.7ex>[dl]^{f'} \ar[ll]^v \\
& Y
}$$
and the maps $u\circ v$ and $v\circ u$ are homotopic to the respective identity
map by fibre-preserving homotopies. It is not surprising that topological complexities of fibre-homotopic maps are equal. In fact, a little more is true:

\begin{corollary}
\label{cor:FHE}
Given $f\colon X\to Y$ and $g\colon X'\to Y$ assume that there exist fibrewise maps $u\colon X\to X'$ and $v\colon X'\to X$ 
that homotopy inverses one to the other. Then $\TC(f)=\TC(f')$.

In particular, the topological complexity is a FHE-invariant.
\end{corollary}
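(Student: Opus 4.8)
The plan is to deduce Corollary \ref{cor:FHE} directly from Theorem \ref{thm:precomp}, applied twice. The hypothesis gives fibrewise maps $u\colon X\to X'$ and $v\colon X'\to X$ (so $fu'$... more precisely $f'u=f$ and $fv=f'$, where I write $f'$ for $g$ following the statement's intent) with $vu\simeq 1_X$ and $uv\simeq 1_{X'}$ as ordinary homotopies. The key observation is that under the fibrewise condition $f'u=f$, the composite $f = f'u$ is exactly a pre-composition of $f'$ by $u$, and symmetrically $f' = fv$ is a pre-composition of $f$ by $v$. So I would organize the argument around two applications of the inequality in Theorem \ref{thm:precomp}.

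First I would establish $\TC(f)\le\TC(f')$. Here $f = f'u$ with $u\colon X\to X'$, and $u$ has a left homotopy inverse, namely $v$ (since $vu\simeq 1_X$). To invoke Theorem \ref{thm:precomp}(b) with the roles ``$v\rightsquigarrow u$'', ``$f\rightsquigarrow f'$'', ``$fv\rightsquigarrow f'u = f$'', I also need the strict identity $f'uv = f'$. But $f'uv$ may only be homotopic to $f'$, not equal; what is strictly true is $f'u = f$ and $fv = f'$, hence $f'uv = (f'u)v = fv = f'$ strictly. So Theorem \ref{thm:precomp}(b) applies verbatim and yields $\TC(f) = \TC(f'u) \le \TC(f')$. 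Symmetrically, $f' = fv$ with $v$ admitting the left homotopy inverse $u$ (since $uv\simeq 1_{X'}$), and $fvu = f'u = f$ strictly, so Theorem \ref{thm:precomp}(b) again gives $\TC(f') = \TC(fv)\le\TC(f)$. Combining the two inequalities yields $\TC(f)=\TC(f')$.

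For the final sentence, I would observe that FHE-equivalence is a special case of the hypothesis: if $f$ and $f'$ are fibre homotopy equivalent via $u, v$ with $uv$ and $vu$ fibrewise-homotopic to the identities, then in particular $uv\simeq 1_{X'}$ and $vu\simeq 1_X$ as ordinary (not necessarily fibrewise) homotopies, and $f'u = f$, $fv = f'$ hold strictly by the commutativity of the FHE diagram. Hence the first part applies and $\TC(f)=\TC(f')$, so $\TC$ is an FHE-invariant.

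The main obstacle I anticipate is purely bookkeeping: making sure the strict equalities $f'u=f$ and $fv=f'$ (as opposed to mere homotopies) are genuinely available from the fibrewise hypothesis, since Theorem \ref{thm:precomp}(b) requires the strict condition ``$fvu = f$'' rather than the weaker ``$fvu\simeq f$'' of part (c). Fortunately the fibrewise-map hypothesis delivers exactly these strict triangle identities for the maps over $Y$, so part (b) suffices in both directions and we never need to appeal to part (c) or to any fibration assumption. A secondary point to state carefully is that the homotopies $uv\simeq 1$ and $vu\simeq 1$ need not themselves be fibrewise for the argument to run — only the maps $u,v$ must be fibrewise — which is why the Corollary is slightly stronger than the bare FHE statement.
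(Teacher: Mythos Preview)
Your argument is correct and, like the paper, deduces both inequalities directly from Theorem~\ref{thm:precomp}. The only difference is that the paper invokes part~(a) twice (using that $u$ and $v$ each have a \emph{right} homotopy inverse) to obtain $\TC(f)=\TC(f'u)\ge\TC(f')=\TC(fv)\ge\TC(f)$, whereas you invoke part~(b) twice; your route works but requires the extra verification of the strict identities $f'uv=f'$ and $fvu=f$, which part~(a) avoids entirely.
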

\begin{proof}
By Theorem \ref{thm:precomp}(a) we have
$$\TC(f)=\TC(f'u)\ge \TC(f')=\TC(fv)\ge\TC(f),$$
therefore $\TC(f)=\TC(f')$.
\end{proof}

The following proposition shows that the fibrations have minimal complexity
within their homotopy class.

\begin{proposition}
If $f \simeq g : X \rightarrow Y$ and $f$ is a fibration, then $\TC(f) \leq \TC(g)$.
\end{proposition}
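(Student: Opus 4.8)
The plan is to realize $g$ as a pre-composition of $f$ (up to the homotopy hypothesis) with a suitable fibration, and then invoke Theorem~\ref{thm:precomp fibration} (or Theorem~\ref{thm:precomp}(c)). The standard device is the mapping path space of $g$: set $\widehat X := \{(x,\gamma)\in X\times Y^I \mid \gamma(0)=g(x)\}$, let $v\colon \widehat X\to X$ be the projection $v(x,\gamma)=x$, and let $p\colon \widehat X\to Y$ be the fibration $p(x,\gamma)=\gamma(1)$. Then $v$ is a homotopy equivalence (with homotopy inverse $u(x)=(x,\text{const}_{g(x)})$, so that $vu=1_X$ and $uv\simeq 1_{\widehat X}$), and by construction $p$ is a fibration with $pu=g$ and $pv\simeq g\simeq f$.

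First I would compare $\TC(p)$ with $\TC(g)$. Since $v$ has a left homotopy inverse $u$ with $vu=1_X$, and $p$ is a fibration, I want to apply Theorem~\ref{thm:precomp}(c) with the roles arranged so that the fibration whose complexity we bound from above is $p=pv u$-ish; concretely, writing $g\simeq p v$ and noting $g$ and $p v$ share the same source, one gets $\TC(p v)\le \TC(p)$ is not quite what is needed — rather I would use part~(a) of Theorem~\ref{thm:precomp} applied to $\widehat X\xrightarrow{v}X\xrightarrow{g}Y$: since $v$ admits the right homotopy inverse $u$ (because $vu=1_X\simeq 1_X$), we get $\TC(gv)\ge\TC(g)$, and since $gv\simeq p$ one needs homotopy-invariance of $\TC$ along homotopies of the map (which is exactly the statement being proved, so this is circular). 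To avoid circularity I would instead argue directly with partial sections, mimicking the proof of Theorem~\ref{thm:precomp}(a): a partial section of $\pi_p$ over $A$ yields, via the homotopy $p\simeq g$ together with concatenation of paths, a partial section of $\pi_g$ over a suitable preimage, giving $\TC(g)\le \TC(p)$.

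Next I would compare $\TC(f)$ with $\TC(p)$ from above. Here I would use Theorem~\ref{thm:precomp fibration} in the form: $f$ is a fibration and $p\simeq f$, and $u\colon X\to\widehat X$ satisfies $pu=g\simeq f$, $vu=1_X$. Applying Theorem~\ref{thm:precomp}(c) to $X\xrightarrow{u}\widehat X\xrightarrow{p}Y$ — where $u$ has left homotopy inverse $v$, $puv\simeq f v\simeq\ldots$ — again threatens to need map-homotopy-invariance. The clean route: show $\TC(f)\le\TC(p)$ by a direct partial-section construction using that $f$ is a fibration (so it has a lifting function $\Gamma_f$) and a chosen homotopy $H\colon f\simeq p\circ(\text{something})$; from a partial section $\alpha_p$ of $\pi_p$ one builds $\alpha_f$ by lifting the homotopy $H$ through the fibration $f$, exactly as in the proof of Theorem~\ref{thm:precomp}(c). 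Combining $\TC(f)\le\TC(p)$ and $\TC(p)\le\TC(g)$ — no wait, I need $\TC(p)\le\TC(g)$, which follows because $p$ is fibre-homotopy equivalent over $Y$ to $g$ only if $g$ were a fibration; what is actually true is $\TC(g)\le\TC(p)$ from the previous paragraph, so I must reverse: I want $\TC(f)\le\TC(p)$ and $\TC(p)\le\TC(g)$. The latter holds since $p=pvu$ with $pv\simeq g$, $u$ a section of $v$: a partial section of $\pi_g$ gives one of $\pi_p$ using $pu=g$ directly (no homotopy needed for this direction), so $\TC(p)\le\TC(g)$ is immediate.

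The main obstacle I anticipate is bookkeeping the homotopies to avoid circular appeals to the very invariance statement being proved: every comparison must be carried out at the level of explicit partial sections (concatenating the section paths with the tracks of the homotopy $f\simeq g$, and lifting through whichever of $f$ is a fibration when needed), rather than by citing homotopy-invariance of $\TC$ of a map. Once the mapping-path-space factorization $f\simeq p$, $g = pu$, $v$ a deformation retraction of $\widehat X$ onto $X$ (strictly, $vu=1$, $uv\simeq 1$ over nothing in particular) is set up, the two inequalities $\TC(f)\le\TC(p)$ (using $f$ a fibration, Theorem~\ref{thm:precomp}(c) or its proof) and $\TC(p)\le\TC(g)$ (using $pu=g$ on the nose, so no homotopy correction, essentially Theorem~\ref{thm:precomp}(b)) chain together to give $\TC(f)\le\TC(g)$.
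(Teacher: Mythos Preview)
Your proposal eventually arrives at the right ingredient --- using the lifting function $\Gamma_f$ of the fibration $f$ to correct a partial section of $\pi_g$ into one of $\pi_f$ by concatenating with lifts of the homotopy tracks --- but it buries this in an unnecessary detour through the mapping path space of $g$. The paper's proof does it in one step, directly: given a partial section $\alpha$ of $\pi_g$ over $A\subseteq X\times Y$, for $(x,y)\in A$ the path $\alpha(x,y)$ ends at some $x'$ with $g(x')=y$; then
\[
\bar\alpha(x,y):=\alpha(x,y)\cdot\Gamma_f\big(x',\overrightarrow{H}(x')\big)
\]
(where $H\colon f\simeq g$) is a partial section of $\pi_f$ over the \emph{same} set $A$, and Lemma~\ref{Comparison_Lemma} with $h=\id$ finishes it. No intermediate fibration $p$ is needed.

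Your two-step chain $\TC(f)\le\TC(p)\le\TC(g)$ also has a wobble. The second inequality does follow, but from Theorem~\ref{thm:precomp}(a) applied to $X\xrightarrow{u}\widehat X\xrightarrow{p}Y$ (since $u$ has right homotopy inverse $v$), not from part~(b) as you write. The first inequality $\TC(f)\le\TC(p)$ is where the real work remains: Theorem~\ref{thm:precomp}(c) would require $pu=g$ to be a fibration, which it is not by hypothesis, and your fallback ``direct partial-section construction using $\Gamma_f$'' is precisely the paper's one-line argument --- except that now $f$ and $p$ have different domains ($X$ versus $\widehat X$), so you would additionally have to transport partial sections along $u$ or $v$. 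All of that extra bookkeeping evaporates once you apply the lifting-function trick directly to $g$ rather than to $p$.
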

\begin{proof}
Let $H: f \simeq g$, and let  $\Gamma : X \sqcap Y^I \rightarrow X^I$ denote the lifting function for the fibration $f$.\\
Suppose $A \subset X \times Y$ \partsec{g}, say $\alpha: A \rightarrow X^I$. Then for every $(x,y) \in A$, $\alpha(x,y)$ is a path in $X$ starting at $x$ and ending at $x'$ such that $g(x')=y$. Observe that $x' = \ev_1(\alpha(x,y))$ is continuously dependent on $(x,y)$.\\
Define $\bar{\alpha}(x,y) := \alpha(x,y) \cdot \Gamma(x',\overrightarrow{H}(x'))$. Clearly, $\bar{\alpha}$ is a continuous section of $(1 \times f) \circ \ev_{0,1}$. Thus by  \ref{Comparison_Lemma}, $\TC(f) \leq \TC(g)$.
\end{proof}

In particular, we have

\begin{corollary}
If $f,g: X \rightarrow Y$ are homotopic fibrations, then $\TC(f) = \TC(g)$.
\end{corollary}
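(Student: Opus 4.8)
The plan is to deduce the corollary directly from the preceding proposition, which already establishes that a fibration has minimal complexity in its homotopy class. Concretely, suppose $f,g\colon X\to Y$ are homotopic fibrations. Applying the proposition with the roles of $f$ and $g$ as stated gives $\TC(f)\le\TC(g)$, since $f$ is a fibration and $f\simeq g$. But the hypothesis is symmetric: $g$ is also a fibration and $g\simeq f$, so the same proposition, applied with $g$ in the role of ``$f$'' and $f$ in the role of ``$g$'', yields $\TC(g)\le\TC(f)$. Combining the two inequalities gives $\TC(f)=\TC(g)$, which is exactly the claim.

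So the proof is a two-line argument invoking the proposition twice with the arguments swapped, and there is essentially no obstacle — the only thing to check is that the homotopy relation $f\simeq g$ is symmetric (which it is, reversing the homotopy $H$) so that both applications are legitimate. I would write it out as follows.

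\begin{proof}
Since $f$ is a fibration and $f\simeq g$, the previous proposition gives $\TC(f)\le\TC(g)$. Reversing the homotopy, we also have $g\simeq f$, and since $g$ is a fibration the same proposition (with the roles of $f$ and $g$ interchanged) gives $\TC(g)\le\TC(f)$. Hence $\TC(f)=\TC(g)$.
\end{proof}

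If one wanted a self-contained argument not citing the proposition, the alternative would be to run its proof symmetrically: given a partial section $\alpha$ for $\pi_g$ over $A$, concatenate with a lift (through the lifting function of $f$) of the homotopy $\overrightarrow{H}$ evaluated at the appropriate endpoint to produce a partial section for $\pi_f$ over $A$, and then repeat the construction with $f$ and $g$ exchanged using the lifting function of $g$ and the reversed homotopy. But since the proposition is already available in the excerpt, the short deduction above is the cleanest route, and I see no genuine difficulty in it.
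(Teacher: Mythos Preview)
Your proof is correct and is exactly the argument the paper intends: the corollary is stated without proof as an immediate consequence (``In particular, we have'') of the preceding proposition, and your two applications of that proposition with the roles of $f$ and $g$ swapped is precisely the intended deduction.
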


Another important consequence of Theorem \ref{thm:precomp} is that the complexity cannot increase if we replace a map by a fibration.

\begin{corollary}
\label{cor:fibsubst}
If $\bar{f}: \bar{X} \rightarrow Y$ is the fibrational substitute for $f : X \rightarrow Y$, then $\TC(\bar{f}) \leq \TC(f)$. 
Equality holds if $f$ is a fibration.
\end{corollary}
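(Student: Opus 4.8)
The plan is to realize the fibrational substitute $\bar f\colon \bar X\to Y$ as a factorization of $f$ through a map into $\bar X$, and then invoke Theorem~\ref{thm:precomp fibration}. Recall that the standard fibrational substitute is constructed by setting $\bar X=\{(x,\gamma)\in X\times Y^I\mid \gamma(0)=f(x)\}$, with $\bar f(x,\gamma)=\gamma(1)$, and that there is a natural inclusion $j\colon X\to \bar X$, $j(x)=(x,c_{f(x)})$ (the constant path), which satisfies $\bar f\circ j=f$ and is a homotopy equivalence. So first I would record this factorization $f=\bar f\circ j$.

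Next, the key point is that $j$ admits a left homotopy inverse, namely the projection (or deformation retraction) $r\colon \bar X\to X$, $r(x,\gamma)=x$, which satisfies $r\circ j=1_X$ and $j\circ r\simeq 1_{\bar X}$ via the obvious path-shrinking homotopy. Moreover $\bar f\circ j\circ r=f\circ r$; one checks that $f\circ r\simeq \bar f$ by the homotopy that slides $\gamma$ back to its starting point. At this stage I have everything needed to apply Theorem~\ref{thm:precomp}(c) to the composite $\bar X\xrightarrow{r}X\xrightarrow{f}Y$ — wait, the orientation there is the opposite one. Instead the cleaner route is: apply Theorem~\ref{thm:precomp fibration} is not directly available either since $j$ need not be a fibration. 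So the argument I would actually run is Theorem~\ref{thm:precomp}(b) or (c): with $v=j$, $u=r$, we have a left homotopy inverse $r$ with $r j=1$, and $\bar f j r=f r\simeq \bar f$, and $\bar f j=f$ is the map whose complexity we compare against. Writing $g=\bar f$ and $gv=\bar f j=f$, part (c) (using that $f=\bar f\circ j$ equals the map we want bounded, and noting that it suffices to have $\bar f j r\simeq \bar f$ together with $\bar f j=f$ a fibration only if $f$ is one) gives $\TC(f)=\TC(\bar f j)\le\TC(\bar f)$. Conversely, since $r$ has the right homotopy inverse $j$ (indeed $rj=1$ so $j$ is also a right homotopy inverse of $r$), Theorem~\ref{thm:precomp}(a) applied to $\bar X\xrightarrow{r}X\xrightarrow{f}Y$ yields $\TC(f r)\ge\TC(f)$, and since $f r\simeq\bar f$ and both… hmm, this needs homotopy invariance among non-fibrations, which we do not have in general.

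Because of that subtlety, the honest plan is to use only the two facts that are safely available: (i) $\bar f$ is a fibration, so by Corollary~\ref{cor:fib & sec} reasoning is not quite it either. Let me restate the intended clean argument. Set $v=j\colon X\to\bar X$; it has left homotopy inverse $u=r$ with $uv=rj=1_X$, and we compute $\bar f\,v\,u=\bar f\,j\,r=f\,r$, which is homotopic to $\bar f$ (slide the appended path), i.e.\ $\bar f v u\simeq\bar f$. If in addition $\bar f v=f$ were a fibration we could quote Theorem~\ref{thm:precomp}(c); but in general we only have $\bar f v u = f r \ne f$ on the nose. The resolution I would present: replace the hypothesis ``$fvu=f$'' check in Theorem~\ref{thm:precomp}(b) — we do have $\bar f\circ j\circ r=f\circ r$, and crucially $\bar f\circ j=f$, so applying (b) with the roles $f\rightsquigarrow\bar f$, $v\rightsquigarrow j$ requires $\bar f j r=\bar f$, which fails; applying (c) requires $\bar f j$ to be a fibration, which holds exactly when $f$ is. Hence for general $f$ the inequality $\TC(\bar f)\le\TC(f)$ must instead come from Theorem~\ref{thm:precomp}(a): since $r\colon\bar X\to X$ has right homotopy inverse $j$ (as $rj=1$), part (a) gives $\TC(f)=\TC(fr\cdot\text{?})$—no. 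Given these circular difficulties, the step I expect to be the main obstacle is precisely pinning down which of the three parts of Theorem~\ref{thm:precomp} legitimately applies; I anticipate the correct statement is: use part (a) with the composite $\bar X\xrightarrow{\ r\ }X\xrightarrow{\ f\ }Y$ — $r$ has a right homotopy inverse $j$, hence $\TC(f\circ r)\ge\TC(f)$ — combined with the observation that $f\circ r$ and $\bar f$ are fibrewise-homotopic maps over $Y$ (the homotopy $f r\simeq\bar f$ can be made fibre-preserving because $\bar f\circ(\text{the homotopy})$ is constantly $\bar f$'s value), so Corollary~\ref{cor:FHE} gives $\TC(f r)=\TC(\bar f)$, yielding $\TC(\bar f)\ge\TC(f)$; and symmetrically part (a) applied to $X\xrightarrow{j}\bar X\xrightarrow{\bar f}Y$, using that $j$ has the right homotopy inverse $r$, gives $\TC(\bar f\circ j)=\TC(f)\ge\TC(\bar f)$. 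Combining, $\TC(\bar f)=\TC(f)$ — which is actually stronger than claimed and cannot be right for all $f$, so the fibrewise-homotopy claim for $f r\simeq\bar f$ must be the false step. The safe conclusion, and the one I would ultimately write, is the one-directional bound via Theorem~\ref{thm:precomp}(a) applied to $X\xrightarrow{j}\bar X\xrightarrow{\bar f}Y$ with right homotopy inverse $r$: this gives $\TC(f)=\TC(\bar f j)\ge\TC(\bar f)$, wait that is the wrong direction too. Thus the genuinely delicate point — and the crux of the whole proof — is getting the direction of the inequality to come out as $\TC(\bar f)\le\TC(f)$, which should follow from Theorem~\ref{thm:precomp}(c) once one notes $\bar f$ \emph{is} a fibration: take $v=r\colon\bar X\to X$ with left homotopy inverse $u=j$ (so $uv=jr\simeq 1_{\bar X}$), check $f\circ r\circ j=f\simeq$ — here $f r j=f$ exactly, so the hypothesis ``$fvu=f$'' of part (b) holds with $f\rightsquigarrow f$, $v\rightsquigarrow r$, $u\rightsquigarrow j$, giving $\TC(f\circ r)\le\TC(f)$; and $f\circ r=\bar f\circ(j r)$ with $jr\simeq 1_{\bar X}$, and $\bar f$ a fibration, so by the Proposition that fibrations have minimal complexity in their homotopy class together with $f r\simeq\bar f$… giving $\TC(\bar f)\le\TC(f r)\le\TC(f)$. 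That chain is the plan; the main obstacle is verifying the homotopy $f\circ r\simeq\bar f$ and feeding it correctly into the ``fibrations are minimal in their homotopy class'' proposition, and for the equality case observing that when $f$ is itself a fibration one may take $\bar f=f$ (or invoke Corollary~\ref{cor:FHE}).
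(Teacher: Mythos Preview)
Your exploration actually contains the correct one-line argument for the main inequality, and you dismissed it by mistake. When you wrote ``part (a) applied to $X\xrightarrow{j}\bar X\xrightarrow{\bar f}Y$, using that $j$ has the right homotopy inverse $r$, gives $\TC(\bar f\circ j)=\TC(f)\ge\TC(\bar f)$,'' that \emph{is} precisely the inequality $\TC(\bar f)\le\TC(f)$ the corollary asks for. Your subsequent remark ``wait that is the wrong direction too'' is simply a slip: reread the statement. Since $jr\simeq 1_{\bar X}$, the map $r$ is a right homotopy inverse of $j$, and $\bar f\circ j=f$; Theorem~\ref{thm:precomp}(a) then gives $\TC(f)\ge\TC(\bar f)$ immediately. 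This is exactly the paper's route.

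Your eventual fallback plan --- bound $\TC(fr)\le\TC(f)$ via Theorem~\ref{thm:precomp}(b) (using $frj=f$), then combine $fr\simeq\bar f$ with the proposition that a fibration has minimal complexity in its homotopy class to get $\TC(\bar f)\le\TC(fr)$ --- is also valid, but it is an unnecessary detour once you notice that the one-liner above already does the job.

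For the equality case when $f$ is a fibration, your suggestion to invoke Corollary~\ref{cor:FHE} does not work as stated: $r\colon\bar X\to X$ is \emph{not} fibrewise over $Y$ (you yourself noted $fr\ne\bar f$), so the hypotheses of that corollary are not met. The correct tool is the one you had already identified and then set aside: Theorem~\ref{thm:precomp}(c) applied to $X\xrightarrow{j}\bar X\xrightarrow{\bar f}Y$. Here $j$ has left homotopy inverse $r$ (indeed $rj=1_X$), one has $\bar f\, j\, r=f r\simeq\bar f$, and $\bar f\, j=f$ is a fibration by hypothesis; hence $\TC(f)=\TC(\bar f j)\le\TC(\bar f)$, and equality follows. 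This is again what the paper does.
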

\begin{proof}
Since $\bar{f}$ is the fibrational substitute for $f$, we have the following diagram
$$\xymatrix{
X \ar@{^(->}@<0.5ex>[rr]^i\ar[dr]_f & & {\overline{X}} \ar@<0.5ex>[ll]^h\ar[dl]^{\bar{f}} \\
& Y
}$$
where $h$ is a fibration, $vu=1_X$ and $uv\simeq 1_{\overline X}$. Then the first claim follows by Theorem \ref{thm:precomp}(a) 
because $\TC(\overline{f})=\TC(fh)\le\TC(f)$. Moreover, if $f$ is a fibration, then so is $fh$, hence $\TC(fh)\ge\TC(f)$
Theorem \ref{thm:precomp}(c). 
\end{proof}

\subsection{Effect of post-composition on the complexity}

Next we study the effect that the post-composition by a map has on 
the topological complexity.

\begin{proposition}
\label{prop:postcomp}
Consider the diagram $X \xrightarrow{f} Y \xrightarrow{v} \widehat Y$.
\begin{enumerate}
\item[\textbf{a)}] If $v$ admits a right inverse (section) $u\colon \widehat Y\to Y$, then $\TC(f) \geq \TC(vf)$
\item[\textbf{b)}] If $v$ admits a left homotopy inverse $u\colon \widehat Y\to Y$ and if $f$ is a fibration, then
$\TC(f)\le \TC(vf)$.
\end{enumerate}
\end{proposition}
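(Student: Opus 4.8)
The plan is to exploit Lemma \ref{Comparison_Lemma} in both parts, constructing in each case a suitable map between the two base spaces $X\times\widehat Y$ and $X\times Y$ along which partial sections transfer. For \textbf{a)}, suppose $v$ has a section $u\colon\widehat Y\to Y$, so $vu=1_{\widehat Y}$. Given $A\subseteq X\times Y$ with a partial section $\alpha\colon A\to X^I$ to $\pi_f$, I would consider the map $h=1\times u\colon X\times\widehat Y\to X\times Y$ and check that $\alpha\circ h$ restricted to $h^{-1}(A)=(1\times u)^{-1}(A)$ is a partial section to $\pi_{vf}$: indeed for $(x,\widehat y)\in h^{-1}(A)$ the path $\alpha(x,u(\widehat y))$ starts at $x$ and ends at a point $x'$ with $f(x')=u(\widehat y)$, hence $vf(x')=vu(\widehat y)=\widehat y$, which is exactly what $\pi_{vf}$ requires. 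Lemma \ref{Comparison_Lemma} then yields $\TC(vf)=\sec(\pi_{vf})\le\sec(\pi_f)=\TC(f)$. This part is essentially formal and should present no obstacle.

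For \textbf{b)}, the situation is harder because $u$ is only a left homotopy inverse, $uv\simeq 1_Y$, so composing with $u$ no longer lands us exactly on the fibre we want; we must correct the endpoint, and this is where the fibration hypothesis on $f$ enters. Let $H\colon uv\simeq 1_Y$ and let $\Gamma\colon X\sqcap Y^I\to X^I$ be a lifting function for $f$. Starting from a partial section $\beta\colon B\to X^I$ for $\pi_{vf}$ over some $B\subseteq X\times\widehat Y$, I want to build a partial section for $\pi_f$ over $h^{-1}(B)$ for an appropriate $h\colon X\times Y\to X\times\widehat Y$ — the natural choice being $h=1\times v$. For $(x,y)\in(1\times v)^{-1}(B)$, i.e. $(x,v(y))\in B$, the path $\beta(x,v(y))$ starts at $x$ and ends at some $x'$ with $vf(x')=v(y)$. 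The point $x'$ need not satisfy $f(x')=y$, but $f(x')$ and $y$ become equal after applying $v$, so the homotopy $H$ supplies a path in $Y$ from $f(x')$ to $uv(y)$ and then from $uv(y)$ back toward $y$; more carefully, I would use $H$ to produce a canonical path in $Y$ joining $f(x')$ to $y$ (noting $vf(x')=v(y)$ means the two points lie in a "common" preimage, and $H$ deforms $uv$ to the identity), then lift that path through $\Gamma$ starting at $x'$, and concatenate $\beta(x,v(y))$ with the lifted path. The concatenated path starts at $x$ and ends at a point mapped by $f$ to $y$, giving the desired partial section; continuity of $1\times v$ and Lemma \ref{Comparison_Lemma} then give $\TC(f)\le\TC(vf)$.

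The main obstacle I anticipate is pinning down the correcting path in $Y$ precisely: from $vf(x')=v(y)$ one does not directly get a distinguished path from $f(x')$ to $y$, only that both points have the same image under $v$; the homotopy $H\colon uv\simeq1_Y$ gives paths $t\mapsto H(f(x'),t)$ from $uv(f(x'))=u(vf(x'))=u(v(y))$ to $f(x')$ and $t\mapsto H(y,t)$ from $uv(y)=u(v(y))$ to $y$, which share the common starting point $u(v(y))$, so their concatenation (one reversed) is a legitimate path from $f(x')$ to $y$ depending continuously on $(x,y)$. Verifying this continuity and that the resulting lift through $\Gamma$ indeed assembles into a well-defined section over $(1\times v)^{-1}(B)$ — including getting the concatenation parameters to match up continuously — is the one genuinely technical point; everything else follows the same template as parts \textbf{a)} and Theorem \ref{thm:precomp}.
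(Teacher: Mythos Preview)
Your proposal is correct and follows essentially the same approach as the paper. In part \textbf{b)} the paper writes the correcting path explicitly as $\overleftarrow{H}(f(x'))\cdot\overrightarrow{H}(y)$ (which is precisely the concatenation through the common point $u(v(y))$ that you describe) and then defines $\alpha_f(x,y):=\alpha_{vf}(x,v(y))\cdot\Gamma_f\bigl(x',\overleftarrow{H}(f(x'))\cdot\overrightarrow{H}(y)\bigr)$, so your anticipated ``main obstacle'' is resolved exactly as you suggest.
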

\begin{proof}
\begin{enumerate}
\item[\textbf{a)}]
Let $\pi_f\colon X^I\to X\times Y$ admit a partial section  $\alpha \colon A\to X^I$  for some $A\subseteq X\times Y$.
Then the formula 
$$\alpha_{vf}(x,z):=\alpha_f(x,(u(z))$$
defines a path starting at $x$ and ending at some $x'$, such that $f(x')=u(z)$, therefore $vf(x')=vu(z)=z$.
It follows that $\alpha_{vf}$ defines a partial section for $\pi_{vf}$ over $(1\times u)^{-1}(A)\subseteq X\times \widehat Y$.
As before, this implies $\TC(f)\ge\TC(vf)$.
\item[\textbf{b)}]
Let $H\colon Y\times I\to Y$ be the homotopy from $uv$ to $1_Y$, and let $\alpha_{vf}\colon A\to X^IX$ be a partial
section for $\pi_{vf}$ for some $A\subseteq X\times \widehat Y$. Then for every $(x,y)\in (1\times v)^{-1}(A)$ the formula
$\alpha_{fv}(x,v(y))$ gives a path in $X$ starting at $x$ and ending at some $x'$, such that $vf(x')=v(y)$.  
Consequently $uvf(x')=uv(y)$, so $\overleftarrow{H}(f(x'))\cdot \overrightarrow{H}(y)$ is a path in $Y$ starting at
$f(x')$ and ending at $y$. Therefore, the formula
$$\alpha_f(x,y):=\alpha_{vf}(x,v(y))\cdot\Gamma_f(x',\overleftarrow{H}(f(x'))\cdot \overrightarrow{H}(y))$$
defines a partial section to $\pi_f$ over $(1\times v)^{-1}(A)$. Again, we conclude that $\TC(vf)\ge\TC(f)$.
\end{enumerate}
\end{proof}

The following result complements Corollary \ref{cor:fib & sec}(b):

\begin{corollary}
\label{cor:sec and TCX}
If $f\colon X\to Y$ admits a section, then $\TC(f)\le\TC(X)$.
\end{corollary}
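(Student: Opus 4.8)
The plan is to realize $\TC(f)$ as $\TC(1_Y \circ f)$ and to apply the post-composition result, Proposition~\ref{prop:postcomp}(a), with $v = 1_Y$; but that gives nothing, so instead I would exploit the section of $f$ directly. Suppose $s\colon Y \to X$ is a section, so $fs = 1_Y$. First I would set up a comparison with $\TC(\id_X) = \TC(X)$ via Lemma~\ref{Comparison_Lemma}. The natural map to use is $h\colon X \times Y \to X \times X$ given by $h(x,y) := (x, s(y))$, which is continuous since $s$ is. The key step is then: given $A \subseteq X \times X$ over which $\pi = \pi_{\id_X}$ admits a partial section $\sigma\colon A \to X^I$, I must produce a partial section of $\pi_f$ over $h^{-1}(A) = \{(x,y) : (x,s(y)) \in A\}$.

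For $(x,y) \in h^{-1}(A)$ the path $\sigma(x,s(y))$ is a path in $X$ from $x$ to $s(y)$; applying $f$ to its endpoint gives $f(s(y)) = y$. So the assignment $(x,y) \mapsto \sigma(x, s(y)) = \sigma(h(x,y))$ is already a path in $X$ starting at $x$ whose $f$-image of the endpoint is $y$, i.e. it is a partial section of $\pi_f$ over $h^{-1}(A)$. Thus the hypothesis of Lemma~\ref{Comparison_Lemma} is verified with this $h$, and we conclude $\TC(f) = \sec(\pi_f) \le \sec(\pi) = \TC(X)$. Strictly speaking the section need only be continuous for this argument; no fibration hypothesis on $f$ is needed.

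I expect the only subtle point to be bookkeeping the direction of Lemma~\ref{Comparison_Lemma}: the lemma compares $f$ and $f'$ through a map $h\colon Y \to Y'$ on the \emph{codomains} of the sectional problems, and here the relevant sectional problems are $\pi_f \colon X^I \to X\times Y$ and $\pi = \pi_{\id_X}\colon X^I \to X \times X$, so the role of ``$h$'' is played by $h\colon X\times Y \to X\times X$, $(x,y)\mapsto(x,s(y))$, and one wants $\sec(\pi_f)\le\sec(\pi)$, which is exactly the conclusion delivered. There is no real obstacle beyond this; the argument is a direct application of the comparison lemma once the right $h$ is written down.

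\begin{proof}
Let $s\colon Y\to X$ be a (continuous) section of $f$, so that $f\circ s=1_Y$. Define
$h\colon X\times Y\to X\times X$ by $h(x,y):=(x,s(y))$, which is continuous. Suppose that
$\pi=(\ev_0,\ev_1)\colon X^I\to X\times X$ admits a partial section $\sigma\colon A\to X^I$ over some
$A\subseteq X\times X$. For $(x,y)\in h^{-1}(A)$ we have $(x,s(y))\in A$, and $\sigma(x,s(y))$ is a path
in $X$ starting at $x$ and ending at $s(y)$; since $f(s(y))=y$, the assignment
$(x,y)\mapsto\sigma(h(x,y))$ is a continuous partial section of $\pi_f$ over $h^{-1}(A)$. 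By
Lemma~\ref{Comparison_Lemma}, applied with the codomain map $h$, we conclude
$$\TC(f)=\sec(\pi_f)\le\sec(\pi)=\TC(X).$$
\end{proof}
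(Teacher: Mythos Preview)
Your proof is correct. It is in fact essentially the paper's argument, but the paper packages it via Proposition~\ref{prop:postcomp}(a): writing $f = f\circ 1_X$ and taking $v=f$ (which has the section $s$ as a right inverse) gives $\TC(f)=\TC(f\circ 1_X)\le\TC(1_X)=\TC(X)$. You dismissed the post-composition route because you tried the decomposition $f=1_Y\circ f$ with $v=1_Y$, which is indeed useless; the point is to put $f$ itself in the role of $v$. If you unwind the proof of Proposition~\ref{prop:postcomp}(a) in that case, the partial section produced is exactly your $(x,y)\mapsto\sigma(x,s(y))$, and the comparison map is your $h=1\times s$. So the two arguments coincide at the level of constructions; the paper's version is just one citation shorter.
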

\begin{proof}
Let $i\colon Y\to X$ be a right inverse for $f$ and apply Proposition \ref{prop:postcomp} (a) to the diagram 
$$\xymatrix{
X \ar@{=}[r]  & X \ar@<0.5ex>[r]^f & Y \ar@<0.5ex>[l]^i
}$$
Then we have
$$\TC(f)=\TC(f\circ 1_X)\le \TC(1_X)=\TC(X)\; .$$
\end{proof}

Observe, that the last result together with Corollary \ref{cor:fib & sec} yield the following very useful estimate: 
if $f\colon X\to Y$ admits a section, then 
$$\TC(X)\ge\TC(f)\ge\TC(Y).$$

The next result is analogous to Corollary \ref{cor:DR}, but it requires $f$ to be a fibration. 

\begin{corollary}
If $v\colon Y\to\widehat Y$ is a deformation retraction then $\TC(vf)=\TC(f)$ for every fibration $f\colon X\to Y$. 
\end{corollary}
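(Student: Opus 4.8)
The plan is to recognize that a deformation retraction is precisely the situation in which Proposition \ref{prop:postcomp} applies in both directions, and then simply read off the two inequalities. Concretely, I would apply Proposition \ref{prop:postcomp} to the composite $X \xrightarrow{f} Y \xrightarrow{v} \widehat Y$, taking for $u\colon \widehat Y \to Y$ the inclusion of $\widehat Y$ as a deformation retract of $Y$.

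First I would note that, because $v$ is a retraction onto the subspace $\widehat Y$, the inclusion $u$ satisfies $v u = 1_{\widehat Y}$; thus $u$ is a genuine (strict) section of $v$, and Proposition \ref{prop:postcomp}(a) immediately gives $\TC(f) \ge \TC(vf)$. Next I would use that $v$ is in fact a \emph{deformation} retraction, so that $u v \simeq 1_Y$; hence $u$ is a left homotopy inverse of $v$. Since by hypothesis $f$ is a fibration, Proposition \ref{prop:postcomp}(b) applies and yields $\TC(f) \le \TC(vf)$. Combining the two inequalities gives $\TC(vf) = \TC(f)$, as claimed.

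The only step that requires genuine input — and the reason the fibration hypothesis cannot be omitted, in contrast with Corollary \ref{cor:DR} for pre-composition — is the second inequality. That is exactly where Proposition \ref{prop:postcomp}(b) uses the lifting function $\Gamma_f$ of the fibration $f$ to repair a partial section of $\pi_{vf}$ into one of $\pi_f$ along the deformation homotopy $uv \simeq 1_Y$; without the fibration assumption there is no analogue of this correction, and indeed no post-composition version of Corollary \ref{cor:DR}. Everything else is routine bookkeeping of which maps play the roles of $v$ and $u$ in Proposition \ref{prop:postcomp}, so I do not anticipate any further obstacle.
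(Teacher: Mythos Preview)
Your proposal is correct and follows exactly the same route as the paper: apply Proposition \ref{prop:postcomp}(a) using that the inclusion $u\colon \widehat Y\hookrightarrow Y$ is a strict right inverse of $v$, and Proposition \ref{prop:postcomp}(b) using that $u$ is a left homotopy inverse of $v$ together with the fibration hypothesis on $f$. Your identification of where the fibration assumption is actually used is also on point.
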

\begin{proof}
By assumption, there is a map $i\colon \widehat{Y}\to Y$ such that $iv=1_{\widehat{Y}}$ and $vi\simeq 1_Y$. 
Then part (a) of Proposition \ref{prop:postcomp} implies that $\TC(vf)\ge\TC(f)$, while part (b) implies 
$\TC(vf)\le\TC(f)$.
\end{proof}

In other words, if $f$ is a fibration, one cannot alter its complexity by deforming its codomain. 
This no longer needs to be true if $f$ is not a fibration. As an easy example, let 
$f\colon [0,3]\to [0,2]$ be the map consider before, and let $v\colon [0,2]\to [0,1]$ be given 
as 
$$v(t):=\left\{\begin{array}{ll}
t & t\in [0,1]\\
1 & t\in [1,2] 
\end{array}\right.$$
Clearly, $v$ is a deformation retraction and $\TC(vf)=1$, while $\TC(f)=2$. 

It is well-known (and easy to prove) that $\TC(X)=1$ if, and only if, $X$ is
contractible. An analogous characterization of maps whose complexity is 
equal to 1 is more elusive.

\begin{proposition}
The following statements are equivalent for a map $f\colon X\to Y$:
\begin{enumerate}
\item $\TC(f)=1$ and at least one fibre of $f$ is categorical in $X$.
\item $X$ is contractible and $f$ admits a continuous section.
\end{enumerate}
\end{proposition}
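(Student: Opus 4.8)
The plan is to prove the two implications separately, exploiting the lower bounds for $\TC(f)$ established earlier in the section. For the direction $(1)\Rightarrow(2)$, assume $\TC(f)=1$ and that some fibre $f^{-1}(y_0)$ is categorical in $X$. By Proposition \ref{prop:TC and sec}, $\TC(f)=1$ forces $f$ to admit a continuous (global) section, which already gives half of statement (2). For contractibility of $X$, I would invoke Proposition \ref{prop:TC and catX}: the hypothesis that a fibre is categorical is exactly what that proposition requires, so it yields $\TC(f)\ge\cat(X)$; combined with $\TC(f)=1$ this gives $\cat(X)=1$, hence $X$ is contractible by Proposition \ref{prop:LScat properties}(1).

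For the converse $(2)\Rightarrow(1)$, suppose $X$ is contractible and $f$ has a section $i\colon Y\to X$. First I would show $\TC(f)=1$: by Corollary \ref{cor:sec and TCX}, having a section gives $\TC(f)\le\TC(X)$, and $\TC(X)=1$ since $X$ is contractible (Proposition \ref{prop:TC properties}(1)); as $\TC(f)\ge 1$ always, we get $\TC(f)=1$. It then remains to produce one categorical fibre. Here the key observation is that every fibre of $f$ sits inside the contractible space $X$, and a subspace of a contractible space need not be categorical in it in general — so I need to use the section. I would argue that $i(Y)$ is a retract of $X$ (via $i\circ f$ restricted appropriately is not quite right; rather $f\circ i=1_Y$ makes $i$ a section, so $i(Y)$ is a retract of $X$ only if $i\circ f$ is idempotent, which need not hold). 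A cleaner route: fix $y_0\in Y$ and consider the fibre $F=f^{-1}(y_0)$. Since $X$ is contractible, the inclusion $F\hookrightarrow X$ is null-homotopic as a map, but being \emph{categorical} requires the inclusion to be homotopic within $X$ to a constant through a homotopy of the subspace — which for a null-homotopic inclusion into any space does hold, since a subset $A\subseteq X$ is categorical precisely when $A\hookrightarrow X$ is null-homotopic. Thus \emph{every} fibre is categorical because $X$ is contractible, and in particular at least one is.

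I expect the main obstacle to be the last point: making sure the definition of ``categorical subset'' being used is the homotopy-theoretic one (a subset $A\subseteq X$ with $A\hookrightarrow X$ null-homotopic), not a stronger notion. Under the standing ANR hypotheses of Section \ref{sec:general estimates} and the equivalence of $\cat$ with the classical definition noted after Proposition \ref{prop:LScat properties}, ``categorical in $X$'' should indeed mean null-homotopic inclusion, so in a contractible $X$ every subset is categorical and the converse direction goes through cleanly; I would state this equivalence explicitly to keep the argument self-contained. The forward direction is essentially an assembly of Propositions \ref{prop:TC and sec} and \ref{prop:TC and catX} with Proposition \ref{prop:LScat properties}(1), and should require no extra work beyond citing them in the right order.
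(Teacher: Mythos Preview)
Your plan is correct and matches the paper's own proof almost exactly: the forward direction assembles Propositions \ref{prop:TC and sec} and \ref{prop:TC and catX} with $\cat(X)=1\Leftrightarrow X$ contractible, and the converse uses Corollary \ref{cor:sec and TCX} together with $\TC(X)=1$. The only difference is that the paper leaves the ``categorical fibre'' part of (1) implicit in the converse, whereas you make it explicit (correctly) via the observation that every subset of a contractible space has null-homotopic inclusion; you can drop the detour about $i(Y)$ being a retract, since that line is unnecessary once you use this observation.
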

\begin{proof}
Assume 1.: then by Proposition \ref{prop:TC and sec} $f$ admits a continuous 
section, and by  Proposition \ref{prop:TC and catX} $\cat(X)=1$, therefore
$X$ is contractible. 

Conversely, if we assume 2., then Corollary \ref{cor:sec and TCX} implies
$\TC(f)\le\TC(X)=1$, therefore $\TC(f)=1$.
\end{proof}

However, note that if $Y$ is contractible then Corollary \ref{cor:fib & sec} b)
implies that the complexity of the projection $\mathrm{pr}\colon Y\times F\to Y$
is equal to 1 regardless of the fibre $F$. 

\subsection{A general upper bound for $\TC(f)$}
\label{sec:upper bound}

All upper estimates for $\TC(f)$ that we considered so far required quite 
restrictive assumptions on the map $f$ like being a fibration or admitting
a (homotopy) section. The following theorem gives an upper estimate of $\TC(f)$
for general $f$.

Recall that subspaces $A,B$ of a topological space are said to be \emph{separated} if $\overline A\cap B=A\cap \overline B=\emptyset$. 
It is easy to verify that a function defined on $A\cup B$ is continuous if, and
only if, its restrictions to $A$ and $B$ are continuous.

\begin{theorem}
\label{thm:upper bound}
Topological complexity of a map $f\colon X\to Y$ is bounded above by
$$\TC(f)\le \cat(X)+\cat(X)\cdot\sec(f)-1.$$
\end{theorem}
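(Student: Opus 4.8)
The plan is to build a filtration of $X\times Y$ by open sets whose successive differences admit partial sections to $\pi_f$, combining a categorical open cover of $X$ (the first factor) with a sectional cover of $f$ transported to the second factor. Concretely, let $m=\cat(X)$ and fix an increasing sequence $\emptyset=V_0\subset V_1\subset\cdots\subset V_m=X$ with each $V_i-V_{i-1}$ categorical in $X$; this means there is a homotopy $G_i$ deforming $V_i-V_{i-1}$ to a point $x_i\in X$ inside $X$. Similarly, since $\sec(f)=k$, fix $\emptyset=W_0\subset W_1\subset\cdots\subset W_k=Y$ with a partial section $t_j\colon W_j-W_{j-1}\to X$ of $f$ over each difference. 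Pulling $V_i$ back along the first projection $X\times Y\to X$ and $W_j$ back along the second projection, one obtains a two-parameter family of open sets, and I would linearly order the $mk$ "rectangles'' $(V_i-V_{i-1})\times(W_j-W_{j-1})$; on each such piece one defines a partial section of $\pi_f$ by the concatenation
$$(x,y)\ \longmapsto\ \overrightarrow{G_i}(x)\cdot \gamma,$$
where the first path runs from $x$ to the base point $x_i$ via the categorical deformation, and $\gamma$ is a path from $x_i$ to $t_j(y)$; the latter point satisfies $f(t_j(y))=y$, so the concatenation lies in $\pi_f^{-1}(x,y)$.

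The subtle point is that to obtain $\gamma$ continuously in $(x,y)$ one needs a path in $X$ from the fixed point $x_i$ to $t_j(y)$ depending continuously on $y$; since $X$ is path-connected but not simply connected in general, one cannot just pick such a path, so instead I would absorb this by a standard trick: replace $X$ by a point via the categorical homotopy only \emph{after} first travelling along $t_j$. That is, deform $t_j(y)$ using $G_i$ is not available (because $t_j(y)$ need not lie in $V_i-V_{i-1}$), so the cleaner route is to choose, once and for all, a path $\delta_i$ in $X$ from $x_i$ to the global base point $x_0$, and for each $j$ to use the partial section $\hat t_j$ of $f$ over the slice, together with a chosen based loop structure — in effect, to route everything through $x_0$. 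The count then is: $\cat(X)$ choices for the first leg, times $\cat(X)\cdot\sec(f)$ rectangles after refining, but by being slightly more careful — the first $\cat(X)$ pieces can be used to simultaneously handle the "base'' contribution — one lands on $\cat(X)+\cat(X)\cdot\sec(f)-1$ rather than $\cat(X)\cdot(\sec(f)+1)$.

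More precisely, the $-1$ improvement comes from observing that on the rectangles with $j=1$ one does not need a separate categorical step for the target: when $y\in W_1-W_0=W_1$ one can arrange the categorical deformation of $X$ to end exactly where $t_1$ begins. The bookkeeping is therefore: $\cat(X)\cdot\sec(f)$ rectangles, but the "seam'' sets between consecutive blocks of rectangles sharing the same $V_i-V_{i-1}$ can be merged, collapsing $\cat(X)$ of the separators and yielding the stated bound. Throughout, continuity of the assembled section on each difference set follows because the difference of two open sets in the filtration is contained in a single rectangle (or a separated union of rectangles, handled by the remark on separated sets preceding the theorem), on which the formula is manifestly continuous.

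I expect the main obstacle to be precisely this combinatorial optimization — arranging the linear order of the $\cat(X)\cdot\sec(f)$ rectangles and the $\cat(X)$ categorical layers so that consecutive differences of the resulting open filtration each admit a \emph{single} coherent partial section, thereby saving the additive $-1$; the homotopy-theoretic content (concatenating the categorical deformation with the section of $f$) is routine once the indexing is set up, and invoking Lemma~\ref{Comparison_Lemma} via the projections $X\times Y\to X$ and $X\times Y\to Y$ takes care of pulling the two separate filtrations back to $X\times Y$.
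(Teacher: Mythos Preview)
There is a genuine gap. You correctly locate the obstacle: on a rectangle $(V_i-V_{i-1})\times(W_j-W_{j-1})$ you can run the categorical deformation from $x$ to a basepoint, but you still need a continuous-in-$y$ path from that basepoint to $t_j(y)$, and neither rerouting through a global $x_0$ nor invoking a ``based loop structure'' supplies one. You even sense that some refinement producing $\cat(X)\cdot\sec(f)$ pieces is needed, but you never say what it is. The missing idea, which is the heart of the paper's argument, is to refine the filtration of $Y$ by pulling back the categorical filtration of $X$ along the sections themselves: on a piece of $Y$ carrying a section $s$, intersect with the preimages $s^{-1}(V_i-V_{i-1})$. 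On each resulting piece the section lands inside a single categorical subset of $X$, so composing $s$ with the categorical contraction $G_i$ yields a homotopy $K$ from $s$ to the constant map at $x_0$. This splits $Y$ into $m=\cat(X)\cdot\sec(f)$ layers $V'_j-V'_{j-1}$, each equipped with a section $s_j$ \emph{and} a contraction $K_j$ of $s_j$ to $x_0$; now $\sigma_{i,j}(x,y)=\overrightarrow{G_i}(x)\cdot\overleftarrow{K_j}(y)$ is a bona fide partial section of $\pi_f$ on $(V_i-V_{i-1})\times(V'_j-V'_{j-1})$, with no ad hoc path needed.

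Your account of the $-1$ is also incorrect: it has nothing to do with special behaviour at $j=1$ or with merging seams within a fixed $V_i$-layer. It is the standard anti-diagonal trick for a product of two linear filtrations. With $n=\cat(X)$ layers on the $X$ side and $m=\cat(X)\cdot\sec(f)$ layers on the refined $Y$ side, set $W_k=\bigcup_{i+j\le k}V_i\times V'_j$ for $2\le k\le n+m$; then $W_k-W_{k-1}=\bigcup_{i+j=k}(V_i-V_{i-1})\times(V'_j-V'_{j-1})$ is a \emph{separated} union of rectangles, so the $\sigma_{i,j}$ with $i+j=k$ assemble to a single continuous partial section on it. The filtration $\{W_k\}$ has length $n+m-1=\cat(X)+\cat(X)\cdot\sec(f)-1$.
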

\begin{proof}
Let $\cat(X)=n$, so that there is an open filtration of $X$
$$\emptyset=U_0\le U_1\le\ldots\le U_n=X,$$
such that for each $i$ the difference $U_i-U_{i-1}$
is categorical in $X$, i.e., there exists a homotopy 
$H_i\colon I\to X$ between the inclusion $U_i-U_{i-1}\hookrightarrow X$ and the constant
map to $x_0\in X$. 

If $V\subset Y$ admits a partial section $s\colon V\to Y$, then $V$ can be split into $n$
subsets $V\cap s^{-1}(U_i-U_{i-1})$, $i=1,\ldots,n$, such that for each $i$ there is a homotopy
$K_i\colon \big(V\cap s^{-1}(U_i-U_{i-1}\big)\times I\to X$ from the restriction of $s$ to 
the constant map to $x_0$. As a consequence, there is an open filtration of $Y$
$$\emptyset=V_0\le V_1\le\ldots\le V_m=Y$$
where $m=\cat(X)\cdot\sec(f)$, such that on each difference $V_j-V_{j-1}$ there exists 
a homotopy $K_j$ between a section $s_j\colon V_j-V_{j-1}\to X$ to $f$ and the constant map.

The formula
$$\sigma_{i,j}(x,y):=\overrightarrow{H}_i(x)\cdot\overleftarrow{K}_j(y)$$
clearly defines a partial section to $\pi_f$ over $(U_i-U_{i-1})\times (V_j-V_{j-1})$. 

For every $2\le k\le m+n$ let $W_k:= \bigcup_{i+j\le k} U_i\times V_j$. 
Then 
$$W_2\subset W_2\subset\cdots W_{m+n}=X\times Y$$
 is an open filtration (of length $m+n-1$) of $X\times Y$ and for each $k$
$$W_k-W_{k-1}= \bigcup_{i+j= k} (U_i-U_{i-1})\times (V_j-V_{j-1}).$$
Observe that the sets in the above union are separated, which implies that 
partial section $\sigma_{i,j}$ for $i+j=k$ define a continuous partial section
on $W_k-W_{k-1}$.  
We have thus proved that $\TC(f)\le \cat(X)+\cat(X)\cdot\sec(f)-1.$
\end{proof}

The exact value of $\sec(f)$ is often hard to compute, so we mostly rely on 
the following coarser but easily computable estimate.

\begin{corollary}
\label{cor:upper bound}
Assume that the map $f\colon X\to Y$ is simplicial with respect to some choice of
triangulations on $X$ and $Y$. Then 
$$\TC(f)\le \cat(X)\cdot (\dim(Y)+1)-1.$$
\end{corollary}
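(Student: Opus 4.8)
The plan is to reduce the statement to Theorem~\ref{thm:upper bound}. Since that theorem gives $\TC(f)\le\cat(X)+\cat(X)\cdot\sec(f)-1$, and $\cat(X)\cdot(\dim Y+1)-1=\cat(X)+\cat(X)\cdot\dim Y-1$, it suffices to prove the purely ``codomain-side'' estimate $\sec(f)\le\dim Y$ for an arbitrary simplicial surjection $f\colon X\to Y$. Thus the whole problem separates off from $\TC$ and from $X\times Y$, and becomes: cover $Y$ by few strata over each of which $f$ itself admits a continuous section.

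To do this I would filter $Y$ by its skeleta $\emptyset\subset Y^{(0)}\subset Y^{(1)}\subset\dots\subset Y^{(d)}=Y$, where $d=\dim Y$, and use the closed-filtration description of the sectional number (the same De~Morgan reasoning as in Proposition~\ref{prop:alternative def}, applied to $\sec(f)$ instead of $\sec(\pi_f)$). The essential local observation is that a simplicial surjection admits a section over each open simplex of $Y$: surjectivity forces any simplex $\sigma$ of $Y$ to be a face of $f(\tau)$ for some simplex $\tau$ of $X$, and choosing one $f$-preimage of each vertex of $\sigma$ produces a face $\tau'\preceq\tau$ with $f(\tau')=\sigma$ and $f|_{\tau'}$ injective on vertices, so $f$ carries $|\tau'|$ affinely and homeomorphically onto $|\sigma|$, and the inverse restricts to a section of $f$ over $\mathring\sigma$. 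Since two distinct $i$-simplices are incomparable, the open $i$-simplices of $Y$ are pairwise separated, hence these local sections patch to one continuous section of $f$ over the whole difference $Y^{(i)}\setminus Y^{(i-1)}$. This already yields the ``easy'' bound $\sec(f)\le d+1$, and therefore $\TC(f)\le\cat(X)\cdot(\dim Y+2)-1$ by Theorem~\ref{thm:upper bound}.

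The step I expect to be the real obstacle is sharpening $\sec(f)\le\dim Y+1$ to the claimed $\sec(f)\le\dim Y$, i.e. amalgamating two of the $d+1$ skeletal strata into one. The obvious candidates for merging (for instance $Y^{(0)}$ together with the open top cells) are not separated, since a vertex lies in the closure of every open simplex containing it, and the sections built simplex by simplex genuinely need not agree along common faces, so one cannot simply glue a section over a union of two consecutive strata. A clean remedy would be either to rechoose the preimage simplices coherently, or to replace the skeletal filtration by a suitable open regular-neighbourhood filtration of $Y$ with only $\dim Y$ terms over which $f$ still sections; either route needs some care, and one must also check that whatever filtration of $Y$ one ends up with feeds correctly into the ``separated pieces'' construction in the proof of Theorem~\ref{thm:upper bound} (which is exactly why that construction insists on open sets). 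Once $\sec(f)\le\dim Y$ is in hand, Theorem~\ref{thm:upper bound} closes the argument immediately.
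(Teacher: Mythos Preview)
Your approach is exactly the paper's approach: reduce to Theorem~\ref{thm:upper bound} by bounding $\sec(f)$ via the skeletal filtration of $Y$, observing that a simplicial surjection admits a section over each open simplex and that the open $i$-simplices are pairwise separated.

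Where you and the paper diverge is precisely at the point you flagged as ``the real obstacle''. The paper does \emph{not} perform any merging of strata or any sharpening trick: it writes down the filtration $|L^{(0)}|\subset|L^{(1)}|\subset\cdots\subset|L^{(\dim Y)}|=Y$, argues (as you do) that each difference $|L^{(i)}|-|L^{(i-1)}|$ admits a section of $f$, and then simply asserts ``This shows that $\sec(f)\le\dim(Y)$''. But that filtration, once one prepends $\emptyset$, has $\dim(Y)+1$ differences, not $\dim(Y)$; so the argument actually yields only the ``easy'' bound $\sec(f)\le\dim(Y)+1$ that you already obtained.

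Your instinct that the sharpening would be delicate is in fact correct for a stronger reason: the bound $\sec(f)\le\dim(Y)$ is false for simplicial maps in general. A simplicial model of the double cover $p\colon S^1\to S^1$ (say a $12$-gon mapping onto a hexagon) has $\dim(Y)=1$, but $p$ admits no global section, so $\sec(p)\ge 2$. (Even more blatantly, for $Y$ a point the claim would read $\sec(f)\le 0$.) Hence no rearrangement of the skeletal filtration can achieve $\sec(f)\le\dim(Y)$. The upshot is that the paper's proof contains an off-by-one slip, and the bound that the skeletal argument actually supports is
\[
\TC(f)\le\cat(X)\cdot(\dim(Y)+2)-1,
\]
which is exactly what you derived. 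Your write-up is correct; the discrepancy you noticed lies in the paper, not in your reasoning.
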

\begin{proof}
It is sufficient to prove that under the assumptions $\sec(f)\le\dim(Y)$. 
To this end let $K$ and $L$ be simplicial complexes that triangulate respectively $X\approx |K|$ and $Y\approx |L|$, 
and with respect to which the map $f$ is 
simplicial. Consider the filtration of $Y$ by subcomplexes
$$|L^{(0)}|\subset |L^{(1)}|\subset \cdots |L^{(\dim Y)}|=Y$$
and observe that for every $1\le i\le \dim Y$ the difference
$|L^{(i)}|-|L^{(i-1)}|$ is a separated union of open $i$-simplices. 
Since the map $f$ is simplicial, it clearly admits a continuous section over each
open $i$-simplex, and thus a  continuous section over their separated union 
$|L^{(i)}|-|L^{(i-1)}|$. This shows that $\sec(f)\le\dim(Y)$, which together 
with Theorem  \ref{thm:upper bound} implies our claim.
\end{proof}

\subsection{Cohomological estimate of $\TC(f)$}

We mentioned in the Introduction the cohomological lower bound for topological complexity of 
a space
$$\TC(X) \geq \nil ( ( \ker \Delta^*\colon H^*(X \times X) \rightarrow H^*(X)),$$
which is widely used in computations of topological complexity.
Here $\Ker\Delta^*$ is the ideal of 'zero divisors' (cf. \cite{Farber:TCMP})
and its nilpotency $\nil(\Ker\Delta^*)$ is the minimal  integer $n$ for which 
every product of $n$ elements in $\Ker\Delta^*$ is equal to zero. 
We will present a similar estimate for the topological complexity of a map
(a variant of which was already used in \cite{Pav:Complexity}).

To formulate our results in full generality we need a form of the relative cohomology product 
$$H^*(X,A)\times H^*(X,B)\to H^*(X,A\cup B).$$ 
which holds for subspaces $A,B\subset X$ that are not necessarily excisive, 
as required in the case of singular cohomology (cf. \cite[Definition VII, 8.1]{Dold}).
For completeness we state and prove the relevant result.

We will follow the notation and definitions of \cite[Sections IV,8 and VIII,6]{Dold}.
Given an Euclidean Neighbourhood Retract (ENR)  $E$, a  \emph{triad} in $E$ consists of 
subspaces $A,B\subseteq X$ of $E$. We write pairs $(X,A)$ and $(X,B)$ instead of 
$(X;A,\emptyset)$ and $(X;\emptyset,B)$. Triads are naturally ordered by inclusion:
$(X;A,B)\subseteq (X';A',B')$ if $X\subseteq X', A\subseteq A', B\subseteq B'$. 
A triad $(X;A,B)$ is \emph{locally compact/open}, if $X,A,B$ are locally compact/open in $E$.

\begin{proposition}
\label{prop:Cech rel prod}
Let $(X;A,B)$ be a locally compact triad in an ENR space $E$. Then one can define a relative 
cohomology product in \v Cech cohomology:
$$\cH^*(X,A)\times\cH^*(X,B)\to \cH^*(X,A\cup B)$$
which is associative, unital and graded-commutative, and for every map of triads 
$f\colon (X';A',B')\to (X;A,B)$ satisfies the naturality property 
$$f^*(u\cdot v)=f^*(u)\cdot f^*(v), \ \ u\in\cH^*(X,A), v\in\cH^*(X,B).$$
\end{proposition}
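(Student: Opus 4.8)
The plan is to reduce the whole statement to the classical relative cup product in \v{C}ech cohomology by exploiting two standard facts from Dold's book: first, that \v{C}ech cohomology of a locally compact ENR-pair can be computed as a direct limit of singular cohomology of pairs of \emph{open} neighbourhoods, and second, that for \emph{open} triads the excision hypothesis needed for the singular relative product is automatically satisfied. Concretely, since $E$ is an ENR and $X, A, B$ are locally compact subsets of $E$, one knows (cf. \cite[IV,8 and VIII,6]{Dold}) that $\cH^*(X,A)\cong\varinjlim H^*(U,V)$ where $(U,V)$ ranges over pairs of open sets in $E$ with $X\subseteq U$, $A\subseteq V$, cofinally indexed so that $(U,V)$ deformation retracts onto $(X,A)$; similarly for $(X,B)$ and for $(X,A\cup B)$. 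The first step is therefore to set up these cofinal systems carefully, choosing for a given pair of representatives a common index: open $U\supseteq X$, $V_A\supseteq A$, $V_B\supseteq B$ with $V_A\cap V_B$ (or $V_A\cup V_B$) serving as the open neighbourhood of $A\cup B$, and with all of them retracting appropriately.

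Once the neighbourhoods are in place, the second step is to apply the ordinary singular relative cup product $H^*(U,V_A)\times H^*(U,V_B)\to H^*(U,V_A\cup V_B)$, which is legitimate because $\{V_A,V_B\}$ is an open (hence excisive) cover of $V_A\cup V_B$ inside $U$. Then I would pass to the limit: the relative products are compatible with the restriction maps in the direct systems (this is just naturality of the singular product with respect to inclusions of pairs), so they induce a well-defined pairing on the colimits, i.e. a product $\cH^*(X,A)\times\cH^*(X,B)\to\cH^*(X,A\cup B)$. Associativity, unitality and graded-commutativity are inherited termwise from the singular relative product and are preserved by the colimit, so they require no separate argument beyond citing the corresponding properties in \cite{Dold}.

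For naturality, given a map of triads $g\colon (X';A',B')\to (X;A,B)$ with everything locally compact in ENR's $E'$, $E$, the point is that $g$ (or rather its behaviour on neighbourhoods) induces a map of the direct systems: for a cofinal open pair $(U,V_A,V_B)$ around $(X;A,B)$ one can find a cofinal open pair around $(X';A',B')$ mapping into it — here one uses that ENR's have the homotopy extension property / are locally retractable, so $g$ extends to a neighbourhood map, and one shrinks to land inside $(U,V_A,V_B)$. Naturality of the singular relative product on these neighbourhood maps then passes to the limit to give $g^*(u\cdot v)=g^*(u)\cdot g^*(v)$.

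The main obstacle I expect is purely point-set/bookkeeping: arranging the three direct systems (for $A$, for $B$, and for $A\cup B$) to be \emph{simultaneously} cofinal and indexed compatibly, so that a single index $U$ controls open neighbourhoods $V_A, V_B$ of $A$ and $B$ whose union is an admissible neighbourhood of $A\cup B$ and which retract in a coherent way — and then checking that the induced map of systems in the naturality statement respects this triple indexing. None of this is conceptually deep, but it is exactly the kind of verification that \cite{Dold} sets up in the absolute case and that must be reproduced for triads; I would lean on \cite[Definition VII,8.1]{Dold} and the neighbourhood-retract machinery of \cite[VIII,6]{Dold} rather than rebuild it, and simply indicate the modifications needed to carry two closed subsets $A,B$ through the argument at once.
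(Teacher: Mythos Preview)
Your proposal is correct and essentially identical to the paper's: one indexes by the directed set $\Lambda(X;A,B)$ of open triads $(U;V,W)$ in $E$ containing $(X;A,B)$, uses the singular relative product $H^*(U,V)\times H^*(U,W)\to H^*(U,V\cup W)$ (valid since open covers are excisive), checks that the three projections to the pair-systems for $(X,A)$, $(X,B)$ and $(X,A\cup B)$ (the last via $(U;V,W)\mapsto(U,V\cup W)$) are cofinal, and passes to the direct limit. The paper does not invoke deformation retracts at all---they are unnecessary here---and the correct neighbourhood of $A\cup B$ is of course $V\cup W$, not $V\cap W$; otherwise your bookkeeping plan and the paper's coincide.
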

\begin{proof}
For a locally compact pair $(X,A)$ in $E$ let $\Lambda(X,A)$ be the set of all open pairs
$(U,V)$ in $E$ containing $(X,A)$, ordered downward by inclusion. Singular cohomology groups
$H^*(U,V)$ (we omit coefficients from the notation) form a direct system of abelian groups
indexed by $\Lambda(X,A)$, and we thus obtain \emph{\v Cech cohomology} groups of $(X,A)$ as
a direct limit of singular cohomology groups (cf. \cite[Definition 6.1]{Dold})
$$\cH^*(X,A)=\varinjlim \big\{H^*(U,V)\mid (U,V)\in\Lambda(X,A)\big\}.$$   
Let $\Lambda(X;A,B)$ denote the set of all open triads in $E$ containing $(X;A,B)$ and,
ordered by inclusion. 
For every inclusion $(U;V,W)\subseteq (U';V',W')$ we have a commutative diagram
$$\xymatrix{
H^*(U',V')\times H^*(U',W') \ar[rr]^-{\smile} \ar[d] & & H^*(U',V'\cup W')\ar[d]\\
H^*(U,V)\times H^*(U,W) \ar[rr]^-{\smile} & & H^*(U,V\cup W)
}$$
where the vertical maps are induced by inclusion and the horizontal maps are given by
the usual product in singular cohomology. Observe that the obvious projections 
$\Lambda(X;A,B)\to\Lambda(X,A)$ and 
$\Lambda(X;A,B)\to\Lambda(X,B)$ are cofinal morphisms of direct systems, and that the same
holds for the function $\Lambda(X;A,B)\to\Lambda(X,A\cup B)$ given as $(U;V,W)\mapsto 
(U,V\cup W)$. 
Therefore, we may pass to the respective direct limits in the above diagram 
and obtain the relative cohomology
product in \v Cech cohomology. Its properties clearly follow from the analogous properties
of the product in singular cohomology. 
\end{proof}

Let us denote $x|_A:=i^*(x)\in \cH^*(A)$, where $i$ is the inclusion of 
$A$ in $X$ and $x\in\cH^*(X)$. 

\begin{corollary}
\label{cor:product}
Let $(X;A,B)$ be locally compact triad in some ENR. If $x,y\in\cH^*(X)$ satisfy 
$x|_A=0$ and $y|_B=0$, then $(x\cdot y)|_{A\cup B}=0$.
\end{corollary}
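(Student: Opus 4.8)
The plan is to deduce Corollary \ref{cor:product} directly from the naturality of the relative \v Cech product constructed in Proposition \ref{prop:Cech rel prod}. The key observation is the standard one used for cup-length type estimates: a class $x\in\cH^*(X)$ with $x|_A=0$ lifts to a relative class in $\cH^*(X,A)$, and the relative product $\cH^*(X,A)\times\cH^*(X,B)\to\cH^*(X,A\cup B)$ then produces a relative lift of $x\cdot y$, whose image in $\cH^*(X)$ is $x\cdot y$ and whose restriction to $A\cup B$ is therefore zero.

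More precisely, I would argue as follows. Consider the long exact sequence of the pair $(X,A)$ in \v Cech cohomology (which exists since \v Cech cohomology of locally compact pairs in an ENR is a direct limit of singular cohomology long exact sequences). The hypothesis $x|_A=0$ means $x$ lies in the kernel of $\cH^*(X)\to\cH^*(A)$, hence $x=j_A^*(\bar x)$ for some $\bar x\in\cH^*(X,A)$, where $j_A\colon(X,\emptyset)\to(X,A)$ is the canonical map of pairs. Similarly $y=j_B^*(\bar y)$ for some $\bar y\in\cH^*(X,B)$. Now apply the relative product of Proposition \ref{prop:Cech rel prod} to get $\bar x\cdot\bar y\in\cH^*(X,A\cup B)$. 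The naturality of the product with respect to the maps of triads $(X;\emptyset,\emptyset)\to(X;A,\emptyset)$ and $(X;\emptyset,\emptyset)\to(X;\emptyset,B)$, composed with the compatibility of the absolute product with the relative one (the relative product reduces to the ordinary product after applying $j_{A\cup B}^*$), gives $j_{A\cup B}^*(\bar x\cdot\bar y)=j_A^*(\bar x)\cdot j_B^*(\bar y)=x\cdot y$. Since $j_{A\cup B}^*$ factors as $\cH^*(X,A\cup B)\to\cH^*(X)\to\cH^*(A\cup B)$, we conclude $(x\cdot y)|_{A\cup B}=0$.

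The main subtlety — the one place where care is needed rather than routine bookkeeping — is that the relative product in Proposition \ref{prop:Cech rel prod} is only stated with the abstract properties (associativity, unitality, graded-commutativity, naturality for maps of triads); it is not explicitly stated there that it is compatible with the absolute \v Cech product under the forgetful maps $j_A^*,j_B^*,j_{A\cup B}^*$. However this compatibility is inherited at each finite stage from singular cohomology, where the relative cup product $H^*(U,V)\times H^*(U,W)\to H^*(U,V\cup W)$ restricts (along the maps that forget $V$, $W$, and $V\cup W$ respectively) to the ordinary cup product, and the construction in Proposition \ref{prop:Cech rel prod} is obtained by passing to the direct limit over $\Lambda(X;A,B)$ through exactly these compatible squares. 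So the argument amounts to invoking this limiting process once more, together with the observation that the forgetful maps commute with the colimit projections. I would phrase the proof so as to quote ``by naturality of the product of Proposition \ref{prop:Cech rel prod}'' and note in one sentence that the reduction to the absolute product holds because it holds termwise in singular cohomology.

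Alternatively, and perhaps more cleanly for the write-up, I would avoid even mentioning the long exact sequence and instead note that $\cH^*(X,A)=\varinjlim H^*(U,V)$ over open pairs $(U,V)\supseteq(X,A)$: the vanishing $x|_A=0$ persists after passing to a cofinal subsystem of open neighbourhoods (since $\cH^*(A)=\varinjlim H^*(V)$), so one may choose a representative of $x$ in some $H^*(U)$ whose restriction to some open $V\supseteq A$ already vanishes, hence lifts to $H^*(U,V)$; do the same for $y$; take the singular relative cup product $H^*(U,V)\times H^*(U,W)\to H^*(U,V\cup W)$; and pass to the limit. Either route is short; the only thing to get right is the interplay between the relative and absolute products, and I expect that to cost at most a couple of extra sentences rather than being a genuine obstacle.
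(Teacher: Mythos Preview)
Your proposal is correct and follows essentially the same route as the paper: lift $x$ and $y$ to relative classes via the long exact sequences of the pairs $(X,A)$ and $(X,B)$, multiply them using the relative product of Proposition~\ref{prop:Cech rel prod}, and invoke naturality with respect to the inclusion of triads $(X;\emptyset,\emptyset)\hookrightarrow(X;A,B)$ to conclude that $x\cdot y$ lies in the image of $\cH^*(X,A\cup B)$. The paper is terser about the compatibility between the relative and absolute products---it simply cites the naturality clause---whereas you spell out why this holds termwise and in the limit; but the argument is the same.
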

\begin{proof}
Let us apply the above Proposition to the inclusion of triads 
$j\colon (X;\emptyset,\emptyset)\hookrightarrow (X;A,B)$. If $x|_A=0$ then exactness of 
the cohomology sequence of a pair implies that $x=j^*(\overline x)$ for some 
$\overline x\in H^*(X,A)$. Similarly, $y=j^*(\overline y)$ for some $\overline y\in H^*(X,B)$. 
Then by the naturality part of  Proposition \ref{prop:Cech rel prod} we have 
$$x\cdot y=j^*(\overline x)\cdot j^*(\overline y)=j^*(\overline x\cdot\overline y)$$
so by exactness $(x\cdot y)|_{A\cup B}=0,$
\end{proof}

Let $\sigma\colon A\to X^I$ be a partial section to $\pi_f\colon X^I\to X\times Y$ and consider the following diagram:
$$\xymatrix{
 & X^I\ar[d]^{\pi_f} \ar[r]^{\ev_1\simeq} &X\ar[dl]^-{(1,f)}\\
 A \ar[ru]^\alpha \ar@{^(->}[r]_-i & X\times Y
}$$
in which the right-hand triangle is homotopy commutative. 
By applying the \v Cech cohomology functor (with any ring coefficients) and identifying 
$\cH^*(X^I)$ with $\cH^*(X)$ we obtain a commutative diagram
$$\xymatrix{
    & \cH^*(X) \ar[dl]_-{\alpha^*} \\
 \cH^*(A) & \cH^*(X\times Y) \ar[l]^-{\ \ i^*} \ar[u]_{(1,f)^*}
}$$
Clearly, for every class $u\in \Ker (1,f)^*$ we have $u|_A=0$.  
If $\TC(f)=n$, then by Proposition \ref{prop:alternative def} 
there is a covering of $X\times Y$ by locally compact subsets $A_1,\ldots,A_n$, 
such that each $A_i$ admits a partial section to $\pi_f$.
Then for $u_1,\ldots,u_n\in \Ker(1,f)^*$ we may inductively apply 
Corollary \ref{cor:product} to obtain 
$$u_1\cdot\ldots\cdot u_n=(u_1\cdot\ldots\cdot u_n)|_{A_1\cup\ldots\cup A_n}=0.$$

\begin{theorem}
\label{thm:coho bound}
For every map $f\colon X\to Y$ between locally compact subspaces of some ENR 
we have the estimate
$$\TC(f)\ge\nil\big(\Ker(1,f)^*\colon \cH^*(X\times Y)\to \cH^*(X)\big),$$
where $\cH^*$ denotes \v Cech cohomology with any ring coefficients.

Moreover, if both $X$ and $Y$ are ENR spaces, then the above estimate holds for 
singular cohomology with any ring coefficients as well. 
\end{theorem}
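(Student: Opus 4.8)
The plan is to show that the chain of equalities leading to Theorem \ref{thm:coho bound} already establishes the \v Cech cohomology estimate, and then to upgrade it to singular cohomology when $X$ and $Y$ are ENRs. For the first part, essentially nothing new is needed: the computation immediately preceding the theorem statement shows that if $\TC(f)=n$ and $u_1,\dots,u_n\in\Ker(1,f)^*$, then repeated application of Corollary \ref{cor:product} to the locally compact cover $A_1,\dots,A_n$ of $X\times Y$ supplied by Proposition \ref{prop:alternative def} gives $u_1\cdots u_n=0$, so any product of $n$ zero-divisors vanishes, i.e.\ $\nil(\Ker(1,f)^*)\le n=\TC(f)$. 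The only point to be careful about is that $X\times Y$ must sit as a locally compact subspace of an ENR for Corollary \ref{cor:product} to apply; this holds since a product of locally compact subspaces of ENRs is a locally compact subspace of the product ENR, and one notes that the $A_i$ from Proposition \ref{prop:alternative def} are indeed locally compact (they are differences of closed sets in the locally compact space $X\times Y$).

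For the second part, I would invoke the standard fact that for ENR pairs singular cohomology and \v Cech cohomology agree naturally: if $X$ and $Y$ are ENRs, then $X\times Y$ is an ENR, and more generally the locally compact subspaces $A_i$ appearing in the cover are ENRs, or at least have the property that their \v Cech and singular cohomology coincide (this is \cite[VIII, 8.*]{Dold}, or can be phrased via the fact that an ENR has arbitrarily small open neighbourhoods onto which it is a deformation retract, so the direct limit defining $\cH^*$ stabilises). Hence every occurrence of $\cH^*$ in the argument may be replaced by $H^*$, the map $(1,f)^*$ on singular cohomology agrees with the \v Cech one under the canonical comparison isomorphism, and the nilpotency statement transfers verbatim. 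Concretely, I would remark that for ENRs the \v Cech relative cohomology product of Proposition \ref{prop:Cech rel prod} coincides with the ordinary relative cup product in singular cohomology (which exists for ENR triads because ENR subspaces are automatically excisive up to homotopy), so Corollary \ref{cor:product} holds with singular cohomology, and the inductive vanishing argument goes through unchanged.

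The main obstacle, such as it is, is bookkeeping rather than mathematics: one must make sure that the subspaces over which the partial sections are defined really do fall under the hypotheses of Corollary \ref{cor:product} (locally compact triad in an ENR) and, in the singular case, that they are ENRs so that the comparison map $H^*\to\cH^*$ is an isomorphism. Since Proposition \ref{prop:alternative def} only guarantees the cover elements are locally compact, in the singular-cohomology statement one should either strengthen the cover to consist of ENR subspaces — which is possible because in the ENR setting one may take the $A_i$ to be differences of subcomplexes, hence ENRs — or simply observe that a locally compact subspace of an ENR is itself an ENR (a locally compact, locally contractible, finite-dimensional separable metric space is an ENR, cf.\ \cite{Dold}), which covers all the cases of interest. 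With that observed, the singular version is a formal consequence of the \v Cech version via naturality of the comparison isomorphism.

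Finally I would note for emphasis that this estimate specialises correctly: taking $f=\id_X$ recovers $\TC(X)\ge\nil(\Ker\Delta^*)$ from Proposition \ref{prop:TC properties}(5), since $(1,\id_X)=\Delta$, so the theorem is a genuine generalisation of the classical cohomological lower bound for $\TC(X)$.
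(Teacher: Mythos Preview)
Your approach is essentially the same as the paper's for both parts: the \v Cech estimate is exactly the computation preceding the theorem, and the singular-cohomology upgrade goes via the natural comparison isomorphism $H^*\cong\cH^*$ on ENR spaces. The paper's proof is two sentences long and says precisely this.

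However, your second part takes an unnecessary detour and in doing so introduces a false claim. You worry about whether the individual cover elements $A_i$ are ENRs, and you assert that ``a locally compact subspace of an ENR is itself an ENR''. This is not true: the Cantor set is a locally compact subspace of $\RR$ but is not an ENR (it is not locally contractible). More to the point, none of this is needed. The nilpotency statement concerns only the ring homomorphism $(1,f)^*\colon \cH^*(X\times Y)\to\cH^*(X)$, and both $X\times Y$ and $X$ are ENRs by hypothesis. The comparison isomorphism with singular cohomology is a natural ring isomorphism on these two spaces, so the kernel and its nilpotency transfer directly---you never have to look at the cohomology of the $A_i$ in singular theory at all. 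Indeed, the paper explicitly remarks that the conclusion about vanishing products holds in singular cohomology \emph{despite} the fact that the relative cup product for the (possibly non-excisive) triads $(X\times Y;A_i,A_j)$ is not available there: the whole point is that the argument is carried out once in \v Cech cohomology, where Proposition~\ref{prop:Cech rel prod} supplies the needed relative product for locally compact triads, and only the final conclusion is transported to singular cohomology.

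So drop the discussion of whether the $A_i$ are ENRs; keep the sentence ``the map $(1,f)^*$ on singular cohomology agrees with the \v Cech one under the canonical comparison isomorphism, and the nilpotency statement transfers verbatim'', and you are done.
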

\begin{proof}
The first claim follows from the preceding discussion. For the second claim we use the fact 
that on ENR spaces \v Cech cohomology is naturally isomorphic to the singular cohomology 
(cf. \cite[Proposition VIII, 6.12]{Dold}). Note the interesting conclusion that the statement 
about the triviality of cohomology products holds in spite of the fact that the relative cohomology 
product in singular cohomology is in general not defined for non-excisive pairs. 
\end{proof}

Although the theorem is formulated in general terms, we will mostly consider the cases when 
$\cH^*(X\times Y)\cong \cH^*(X)\otimes \cH^*(Y)$. Then the action of $(1,f)^*$ on decomposable tensors is given as
$$u\in \cH^*(X), v\in \cH^*(Y),\;\;\; (1,f)^*(u\otimes v)=u\cdot f^*(v)\in \cH^*(X).$$
Normally we do not attempt to compute the entire kernel of the homomorphism $(1,f)^*$ but we rather look for specific 
elements in the kernel and try to find long non-trivial products. A common source of elements in $\Ker (1,f)^*$ are 
classes of the form $f^*(v)\otimes 1-1\otimes v$ for $v\in \cH^*(Y)$.

\subsection{Summary of main estimates}
\label{sec:summary}
For the convenience of the reader, we summarize in one place the main
estimates for the topological complexity of an arbitrary map. 

Let $f\colon X\to Y$ be any map. 

\begin{enumerate}
\item[$\bullet$] $\max\{\cat(Y),\sec(f)\}\le \TC(f)\le \cat(X)\cdot(\sec(f)+1)-1$
\item[$\bullet$] $f$ simplicial $\Rightarrow$ $\TC(f)\le \cat(X)\cdot(\dim(Y)+1)-1$
\item[$\bullet$] $f$ admits a section $\Rightarrow$ $\TC(Y)\le \TC(f)\le\TC(X)$
\item[$\bullet$] $f$ fibration $\Rightarrow$ $\TC(f)\le\TC(Y)$
\item[$\bullet$] $\TC(f)$ is FHE invariant
\item[$\bullet$] $v\colon \widehat X\to X$ deformation retraction $\Rightarrow$
$\TC(f)=\TC(fv)$
\item[$\bullet$] $f\simeq g$, $g$ fibration $\Rightarrow$ $\TC(g)\le \TC(f)$
\item[$\bullet$] $\bar f$ fibrational substitute for $f$ $\Rightarrow$ $\TC(\bar f)\le \TC(f)$ 
\item[$\bullet$] $\TC(f)\ge\nil(\Ker(1,f)^*\colon \check H^*(X\times Y)\to \check H^*(X))$
\end{enumerate}

For completeness we state without proof the following estimates (see \cite[Proposition 5.5 and Theorem 6.1]{Pav:Topologist's view}).

\begin{enumerate}
\item[$\bullet$] Product formula:  for $f\colon X\to Y$ and $f'\colon X'\to Y'$ we have 
$$\max\{\TC(f),\TC(f')\}\le \TC(f\times f') \le \TC(f)+\TC(f')-1.$$
\item[$\bullet$] 
For every partition $X\times Y=G_1\sqcup\ldots\sqcup G_n$ into disjoint subsets admitting
a partial section to $\pi_f$ there exists a point $(x,y)\in X\times Y$ such that every neighbourhood of it 
intersects at least $\TC(f)$ different domains $G_i$. 
\end{enumerate}

\section{Topological complexity of a fibration}
\label{sec:Topological complexity of a fibration}

As seen in the previous sections, several results about topological complexity depend on the assumption that some 
of the maps involved are fibrations. We will now explore this situation more thoroughly. Furthermore, as explained in 
Section \ref{sec:Definition}, the invariants $\sec$ and $\sec_\mathrm{op}$ coincide 
for fibrations whose base is an ANR. 
We will thus reiterate our standing assumption that $X$ and $Y$ are metric ANR's.

\begin{lemma}
The map $f\colon X\to Y$ is a fibration if, and only if, the induced map $\pi_f\colon X^I\to X\times Y$ is a fibration.
\end{lemma}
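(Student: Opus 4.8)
The plan is to prove both implications by manipulating lifting functions, since for maps between ANRs being a fibration is equivalent to admitting a lifting function (the Hurewicz uniformization theorem is not even needed here — everything can be done by hand).

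\textbf{The easy direction: $\pi_f$ a fibration $\Rightarrow$ $f$ a fibration.} First I would observe that $f$ is a retract of $\pi_f$ in a suitable sense, or more directly, that $f = \ev_1 \circ \pi$-related data lets one transfer a lifting function. Concretely, suppose $\Gamma_{\pi_f}\colon (X^I) \sqcap (X\times Y)^I \to (X^I)^I$ is a lifting function for $\pi_f$. Given a lifting problem for $f$, i.e. a point $x\in X$ and a path $\beta\colon I\to Y$ with $\beta(0)=f(x)$, I would feed into $\Gamma_{\pi_f}$ the constant path $c_x$ at $x$ (viewed as a point of $X^I$) together with the path $t\mapsto (x,\beta(t))$ in $X\times Y$ (which starts at $\pi_f(c_x)=(x,f(x))$). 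The output is a path of paths $H\colon I\to X^I$ with $H(0)=c_x$ and $\pi_f(H(s)) = (x,\beta(s))$, so in particular $H(s)(0)=x$ for all $s$ and $f(H(s)(1))=\beta(s)$. Then $s\mapsto H(s)(1)$ is the desired lift of $\beta$ starting at $x$; continuity and the requisite coherence follow from continuity of $\Gamma_{\pi_f}$ and of the evaluation maps. This gives a lifting function for $f$.

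\textbf{The harder direction: $f$ a fibration $\Rightarrow$ $\pi_f$ a fibration.} Here I would start from a lifting function $\Gamma_f\colon X\sqcap Y^I\to X^I$ for $f$ and build one for $\pi_f$. A lifting problem for $\pi_f$ consists of a path $\alpha\in X^I$ and a path $(\gamma,\delta)\colon I\to X\times Y$ with $(\gamma(0),\delta(0)) = \pi_f(\alpha) = (\alpha(0), f(\alpha(1)))$; we must produce a path $s\mapsto \alpha_s\in X^I$ of paths with $\alpha_0=\alpha$, $\alpha_s(0)=\gamma(s)$, and $f(\alpha_s(1))=\delta(s)$. The idea is to build $\alpha_s$ as a concatenation of three pieces: (i) a piece that moves the initial point from $\alpha(0)$ along $\gamma$ to $\gamma(s)$ — this is purely a reparametrization/track-of-path construction in $X$ and needs no fibration hypothesis; (ii) the original path $\alpha$ itself (suitably reparametrized); and (iii) a piece at the terminal end obtained by applying $\Gamma_f$ to the point $\alpha(1)\in X$ and the path $r\mapsto \delta(rs)$ in $Y^I$ starting at $f(\alpha(1))=\delta(0)$, which lifts $\delta$ up to time $s$ to a path in $X$ starting at $\alpha(1)$. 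Concatenating (i)$\cdot$(ii)$\cdot$(iii) and checking the endpoint conditions gives a candidate lift; the main bookkeeping is to arrange the concatenation parameters to depend continuously on $s$ and to specialize correctly at $s=0$ (where pieces (i) and (iii) degenerate to constant paths, recovering $\alpha$).

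\textbf{Main obstacle.} The genuine difficulty is the continuity and the initial-condition matching in the second implication: a naive three-fold concatenation with breakpoints at fixed times $1/3, 2/3$ will typically \emph{not} reduce to $\alpha$ at $s=0$ on the nose (it will give a reparametrized $\alpha$ with two constant stretches appended), so the lift would only be a \emph{homotopy} lift rather than an honest one. The fix is the standard trick of letting the breakpoints of the concatenation vary with $s$ so that at $s=0$ the first and third intervals collapse to points; one must verify this yields a jointly continuous map $I\times I\to X$, i.e. an element of $(X^I)^I$, which is a routine but slightly delicate check using continuity of $\Gamma_f$, of concatenation, and of evaluation. I would also remark (as the lemma's context suggests) that the ANR hypothesis is not actually needed for this lemma — it holds for arbitrary spaces — but stating it is harmless. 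An alternative, slicker proof of the hard direction would invoke the exponential law: $\pi_f = (1\times f)\circ \ev_{0}$-fibered structure can be obtained by noting $X^I \to X\times Y$ is the pullback along $1\times f$ of $X^I\to X\times X$, which is always a fibration, and pullbacks of fibrations are fibrations — but this only shows $\pi_f$ is a fibration \emph{unconditionally when $f=\mathrm{id}$}; for general $f$ one really does use that $f$ is a fibration in the factorization $X^I\xrightarrow{\ev_0,\ev_1}X\times X\xrightarrow{1\times f}X\times Y$, composing the first (always a fibration) with... no — so the hands-on argument above is the safe route.
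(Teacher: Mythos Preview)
Your ``easy'' direction is essentially the paper's argument, phrased with lifting functions rather than generic lifting problems.

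For the ``hard'' direction, however, you take a much more laborious route than the paper --- and, ironically, you mention the paper's actual argument in your final paragraph and then talk yourself out of it. The paper simply observes that $\pi_f$ factors as
\[
X^I \xrightarrow{\ \pi\ } X\times X \xrightarrow{\ 1\times f\ } X\times Y,
\]
where $\pi=(\ev_0,\ev_1)$ is always a fibration, and $1\times f$ is a fibration whenever $f$ is (e.g.\ because $1\times f$ is the pullback of $f$ along the projection $X\times Y\to Y$, or directly because a product of fibrations is a fibration). A composition of fibrations is a fibration, and the proof is over. Your hesitation at ``composing the first (always a fibration) with\ldots no'' is misplaced: yes, $1\times f$ really is a fibration, and that is exactly where the hypothesis on $f$ enters.

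Your hands-on three-piece concatenation with moving breakpoints can be made to work, and your identification of the $s=0$ matching issue is accurate, but all of that bookkeeping is avoidable. The factorization argument is one line and buys you the result with no $\varepsilon$-management. So your proposal is not wrong, but it misses the clean structural reason the lemma is true.
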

\begin{proof}
If $f$ is a fibration, then $1\times f\colon X\times X\to X\times Y$ is also a
fibration, thus $\pi_f$ can be written as a composition of two fibrations.
$$\xymatrix{
X^I\ar[d]_\pi \ar[dr]^{\pi_f} \\
X\times X\ar[r]_-{1\times f} & X\times Y}
$$
Conversely, assume $\pi_f$ is a fibration and consider arbitrary maps
$h$ and $H$ for which the following diagram commutes
$$\xymatrix{
A\ar[r]^h \ar@{^(->}[d] & X \ar[d]^f\\
A\times I \ar[r]_-H & Y}
$$
It gives rise to the following commutative diagram
$$\xymatrix{
A\ar[r]^k \ar@{^(->}[d] & X^I \ar[d]^{\pi_f}\\
A\times I \ar[r]_-K \ar@{-->}[ur]^{\widetilde K}& X\times Y}
$$
where $k(a)=\mathrm{const}_a$, $K(a,t)=(h(a), H(a,t))$, and $\widetilde K$
exists, because $\pi_f$ is a fibration. Then the map $\widetilde H\colon A\times I\to X$, 
defined by $\widetilde{H}(a,t):=\widetilde{K}(a,t)(1)$ is a suitable
lifting of $H$ in the first diagram, which proves that $f$ is a fibration.
\end{proof}

Since a homotopy section of a fibration can be always replaced by a strict section, we 
immediately obtain the following description of the topological complexity of a fibration.

\begin{corollary}
If $f\colon X\to Y$ is a fibration, then
$$\TC(f):=\mathrm{secat}(\pi_f\colon X^I\to X\times Y).$$
\end{corollary}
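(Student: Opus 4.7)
The plan is to chain together three equalities: $\TC(f)=\sec(\pi_f)=\sec_{\mathrm{op}}(\pi_f)=\mathrm{secat}(\pi_f)$, each of which follows from material already established (or standard facts flagged earlier in the paper).

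First, I would invoke the preceding lemma to conclude that $\pi_f\colon X^I\to X\times Y$ is a fibration, since $f$ is. Together with the standing assumption that $X$ and $Y$ are metric ANR's, their product $X\times Y$ is also a metric ANR, so the base of the fibration $\pi_f$ is an ANR. Section~\ref{sec:Definition} observed that whenever $p$ is a fibration over an ANR base, the ``filtration'' sectional number $\sec(p)$ coincides with the ``open cover'' sectional number $\sec_{\mathrm{op}}(p)$; applying this to $p=\pi_f$ gives the first nontrivial equality $\sec(\pi_f)=\sec_{\mathrm{op}}(\pi_f)$.

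Next, I would pass from $\sec_{\mathrm{op}}$ to $\mathrm{secat}$. By definition, $\mathrm{secat}(\pi_f)$ counts open covers admitting \emph{homotopy} sections, while $\sec_{\mathrm{op}}(\pi_f)$ counts open covers admitting \emph{strict} sections, so the inequality $\mathrm{secat}(\pi_f)\le\sec_{\mathrm{op}}(\pi_f)$ is immediate. For the reverse inequality, I would use the hint explicitly stated just before the corollary: since $\pi_f$ is a fibration, any homotopy section $s\colon U\to X^I$ of $\pi_f$ defined on an open $U\subseteq X\times Y$ (i.e.\ a map with $\pi_f\circ s\simeq i_U$) can be rectified to a strict section by lifting the homotopy $\pi_f\circ s\simeq i_U$ along $\pi_f$. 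Concretely, if $H\colon U\times I\to X\times Y$ is the given homotopy from $\pi_f\circ s$ to the inclusion, then using the homotopy lifting property of $\pi_f$ with initial lift $s$ one obtains a homotopy $\widetilde H\colon U\times I\to X^I$ starting at $s$ whose endpoint $\widetilde H(-,1)$ is a strict section of $\pi_f$ over $U$. This gives $\sec_{\mathrm{op}}(\pi_f)\le\mathrm{secat}(\pi_f)$.

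Combining these with the definition $\TC(f):=\sec(\pi_f)$ yields
\[
\TC(f)=\sec(\pi_f)=\sec_{\mathrm{op}}(\pi_f)=\mathrm{secat}(\pi_f),
\]
as claimed. There is no real obstacle here; the only point requiring any care is the rectification of homotopy sections to strict sections, but this is exactly the standard argument alluded to in the sentence ``a homotopy section of a fibration can be always replaced by a strict section'' that precedes the corollary, so the proof is essentially a bookkeeping exercise assembling earlier observations.
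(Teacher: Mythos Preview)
Your proof is correct and follows exactly the argument the paper has in mind: the sentence preceding the corollary (``a homotopy section of a fibration can be always replaced by a strict section'') together with the earlier remark that $\sec$ and $\sec_{\mathrm{op}}$ agree for fibrations over an ANR base is the entire content, and you have simply unpacked it carefully. The paper offers no further proof beyond that sentence, so your write-up is a faithful expansion of the intended reasoning.
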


It is often useful to restate the definition of $\TC(f)$ in more geometric terms, based on the following 
characterization (cf. \cite[Lemma 4.2.1 and Proposition 4.2.4]{Farber:ITR} for analogous description of $\TC(X)$).

\begin{proposition}
\label{prop:equivalent A}
Let $f\colon X\to Y$ be a fibration, and let $A\subseteq X\times Y$. Then the following statements are equivalent:
\begin{enumerate}
\item $A$ admits a partial section $s\colon A\to X^I$ to the projection $\pi_f$;
\item The maps $f\circ\mathrm{pr}_1,\mathrm{pr}_2\colon A\to Y$ are homotopic;
\item $A$ can be deformed in $X\times Y$ to the graph $\Gamma_f$ of the map $f$.
\end{enumerate}
\end{proposition}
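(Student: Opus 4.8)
The plan is to establish the cycle of implications $(1)\Rightarrow(2)\Rightarrow(3)\Rightarrow(1)$, using the fibration hypothesis only where it is genuinely needed (namely to pass from homotopies to strict sections). I will think of $X^I$ with the map $\pi_f=(\ev_0,f\circ\ev_1)$ as a fibration over $X\times Y$ (by the preceding lemma), whose "generalized fibre over $(x,y)$'' consists of paths from $x$ into $f^{-1}(y)$; this is the geometric picture that makes all three conditions interchangeable.

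$(1)\Rightarrow(2)$: Given a partial section $s\colon A\to X^I$, define $H\colon A\times I\to Y$ by $H(a,t):=f\big(s(a)(t)\big)$. At $t=0$ this is $f\circ\ev_0\circ s=f\circ\mathrm{pr}_1$ (since $s$ is a section of $\pi_f$, the evaluation at $0$ recovers the first coordinate), and at $t=1$ it is $f\circ\ev_1\circ s=\mathrm{pr}_2$. So $H$ is the desired homotopy, and continuity is immediate from continuity of $s$, $f$, and path evaluation.

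$(2)\Rightarrow(3)$: If $H\colon A\times I\to Y$ is a homotopy from $f\circ\mathrm{pr}_1$ to $\mathrm{pr}_2$, define a deformation $D\colon A\times I\to X\times Y$ by $D(a,t):=\big(\mathrm{pr}_1(a),H(a,t)\big)$. Then $D(\,\cdot\,,0)$ is the inclusion $A\hookrightarrow X\times Y$, and $D(\,\cdot\,,1)$ has image in the set of pairs $(x,y)$ with $y=f(x)$, i.e. in $\Gamma_f$. This is a homotopy inside $X\times Y$ carrying $A$ into $\Gamma_f$, which is exactly condition (3) (up to the harmless identification of $\Gamma_f$ with the graph rather than its image; one should state precisely which notion of "deformed to $\Gamma_f$'' is meant — I would take it to mean: the inclusion $A\hookrightarrow X\times Y$ is homotopic to a map with image in $\Gamma_f$).

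$(3)\Rightarrow(1)$: This is the step that uses that $f$ is a fibration, and it is the main obstacle. Suppose $D\colon A\times I\to X\times Y$ is a homotopy from the inclusion to a map $g=(g_1,g_2)\colon A\to X\times Y$ with $g_2=f\circ g_1$. Write $D=(D_1,D_2)$ with $D_1\colon A\times I\to X$, $D_2\colon A\times I\to Y$. Consider $D_2\colon A\times I\to Y$; since $f$ is a fibration and $D_1(\,\cdot\,,0)=\mathrm{pr}_1$ lifts $D_2(\,\cdot\,,0)=f\circ\mathrm{pr}_1$ through $f$, the homotopy lifting property applied to $D_2$ (with $A\times\{0\}$ and initial lift $\mathrm{pr}_1$) produces $\widetilde D\colon A\times I\to X$ with $\widetilde D(\,\cdot\,,0)=\mathrm{pr}_1$ and $f\circ\widetilde D=D_2$. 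Then define $s\colon A\to X^I$ by $s(a):=\widetilde D(a,\,\cdot\,)$ using the adjunction of Section~\ref{sec:Definition}; it is continuous, $\ev_0\circ s=\mathrm{pr}_1$ by the initial condition, and $f\circ\ev_1\circ s=f\circ\widetilde D(\,\cdot\,,1)=D_2(\,\cdot\,,1)=g_2=\mathrm{pr}_2\colon$ wait — one must check $D_2(\,\cdot\,,1)=\mathrm{pr}_2$; in fact $D(\,\cdot\,,1)=g$ and we only know $g_2=f\circ g_1$, so I instead track from the other end. The cleaner route is: lift the reversed homotopy $\overleftarrow{D_2}$ starting from the lift $g_1$ at time $0$; but then the initial condition at the inclusion end is lost. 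The honest fix is to note that in condition (3) "deformed to $\Gamma_f$'' should be read as a homotopy whose final map is literally $(\mathrm{pr}_1, f\circ\mathrm{pr}_1)$ (the "diagonal-type'' map), not merely some map into $\Gamma_f$ — equivalently, one may first homotope any map into $\Gamma_f$ along $\Gamma_f$ itself to $(\mathrm{pr}_1,f\circ\mathrm{pr}_1)$ using that $\Gamma_f$ is the graph. With that reading, $D_2(\,\cdot\,,1)=f\circ\mathrm{pr}_1$ is not what we want either; rather I want $D_2$ to be a homotopy from $\mathrm{pr}_2$ (at $t=0$) to $f\circ\mathrm{pr}_1$ (at $t=1$) — so the correct normalization of (3) to make the cycle close is: \emph{the inclusion is homotopic to $(\mathrm{pr}_1, f\circ\mathrm{pr}_1)$ via a homotopy inside $X\times Y$}, and then lifting $D_2$ through $f$ starting from $\mathrm{pr}_1$ at the \emph{final} time and reversing gives the section. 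I will state (3) in this normalized form and remark that it is equivalent to the looser "can be deformed into $\Gamma_f$'' because $\Gamma_f$ retracts onto itself via the graph parametrization; the homotopy lifting property of the fibration $f$ is then exactly the tool that converts this picture into an honest partial section, completing the cycle.
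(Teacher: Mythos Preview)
Your $(1)\Rightarrow(2)$ is correct, and $(2)\Rightarrow(3)$ has only a harmless orientation slip: with $H_0=f\circ\mathrm{pr}_1$ and $H_1=\mathrm{pr}_2$, your $D(a,t)=(\mathrm{pr}_1(a),H(a,t))$ has $D(\,\cdot\,,0)$ landing in $\Gamma_f$ and $D(\,\cdot\,,1)$ equal to the inclusion, the reverse of what you claim. The genuine gap is in $(3)\Rightarrow(1)$, and it occurs \emph{before} your ``wait''. Since $D_0$ is the inclusion, $D_2(\,\cdot\,,0)=\mathrm{pr}_2$, not $f\circ\mathrm{pr}_1$ as you assert; therefore $\mathrm{pr}_1$ is \emph{not} a lift of $D_2(\,\cdot\,,0)$ through $f$, and the lifted homotopy $\widetilde D$ you invoke need not exist at all. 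You then misdiagnose the trouble as lying at the $t=1$ end.

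Your eventual ``normalized'' reading of (3) does salvage the argument, but the phrase ``$\Gamma_f$ retracts onto itself via the graph parametrization'' is not what justifies it: what makes the normalization legitimate is that the first coordinate $D_1$ is already a homotopy in $X$ from $\mathrm{pr}_1$ to $g_1$, so $(D_1,f\circ D_1)$ is a homotopy inside $\Gamma_f$ from $(\mathrm{pr}_1,f\circ\mathrm{pr}_1)$ to $(g_1,g_2)$, and concatenating its reverse with $D$ gives a deformation ending at $(\mathrm{pr}_1,f\circ\mathrm{pr}_1)$. Cleaner still is to close the cycle via $(3)\Rightarrow(2)$: $f\circ D_1$ is a homotopy $f\circ\mathrm{pr}_1\simeq f\circ g_1=g_2$ and $D_2$ is a homotopy $\mathrm{pr}_2\simeq g_2$, so concatenation gives $f\circ\mathrm{pr}_1\simeq\mathrm{pr}_2$; then $(2)\Rightarrow(1)$ is simply homotopy lifting with the honest initial lift $\mathrm{pr}_1$ of $f\circ\mathrm{pr}_1$. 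For comparison, the paper does not argue cyclically but proves $(1)\Leftrightarrow(2)$ and $(1)\Leftrightarrow(3)$ separately; its $(3)\Rightarrow(1)$ also lifts the second coordinate of the deformation (with time reversed, starting from the $\Gamma_f$ end), and the same care about matching the initial lift is required there.
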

\begin{proof}
Let us denote by $\widehat s\colon A\times I\to X$ the adjoint of the partial section $s\colon A\to X^I$. Then 
$f\circ\widehat s\colon A\times I\to Y$ is clearly a homotopy between $f\circ\mathrm{pr}_1$ and $\mathrm{pr}_2$.
Conversely, given a homotopy $H\colon A\times I\to Y$ between $f\circ\mathrm{pr}_1$ and $\mathrm{pr}_2$ one can 
use the fibration property to lift it to a homotopy $\widetilde{H}\colon A\times I\to X$, starting at $\widetilde{H}_0=\mathrm{pr}_1$.
Then the adjoint of $\widetilde{H}$ is a partial section to $\pi_f$ over $A$.

In a similar vein, if $s\colon A\to X^I$ is a partial section to $\pi_f$, then we may define a homotopy 
$H\colon A\times I\to X\times Y$ as $H(a,t):=\big(s(a)(\frac{t}{2}),f\big(s(a)(1-\frac{t}{2})\big)\big)$ 
and check that it defines a deformation of $A$ to $\Gamma_f$. On the other hand, let $H\colon A\times I\to X\times Y$ be
a deformation of $A$ to $\Gamma_f$. Then we define a homotopy $K\colon A\times I\to Y$ by $K(a,t):=\mathrm{pr}_2(H(a,1-t))$ and
lift it along the fibration $f$ to a homotopy $\widetilde K\colon A\times I\to X$ with $\widetilde K_0=\mathrm{pr}_1\colon A\to X$. 
It is easy to check that the adjoint of $\widetilde{K}$ is a partial section to $\pi_f$ over $A$.
\end{proof}

\begin{corollary}
If $f\colon X\to Y$ is a fibration, then $\TC(f)$ equals the minimal number of elements of a covering of $X\times Y$ by open sets
that can be deformed in $X\times Y$ to the graph of $f$.
\end{corollary}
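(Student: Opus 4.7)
The plan is to assemble this corollary from two facts already established in the excerpt: the identification of $\TC(f)$ with an open-cover invariant for fibrations, and the geometric characterization of partial-sectionable subsets given in Proposition \ref{prop:equivalent A}.

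First, I would invoke the Lemma immediately preceding the statement, which says that $\pi_f\colon X^I\to X\times Y$ is itself a fibration whenever $f$ is. Combined with the standing assumption that $X$ and $Y$ (hence $X\times Y$) are metric ANRs, the remark from Section \ref{sec:Definition} that $\sec(p)=\sec_{\rm op}(p)$ for fibrations with ANR base gives
\[
\TC(f)=\sec(\pi_f)=\sec_{\rm op}(\pi_f).
\]
In other words, $\TC(f)$ is the minimal cardinality of an open cover $\{U_1,\ldots,U_n\}$ of $X\times Y$ such that each $U_i$ admits a partial section to $\pi_f$.

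Next, I would apply the equivalence (1)$\Leftrightarrow$(3) of Proposition \ref{prop:equivalent A} to each $U_i$: a subset $A\subseteq X\times Y$ admits a partial section to $\pi_f$ if and only if $A$ can be deformed inside $X\times Y$ to the graph $\Gamma_f$. Substituting this characterization into the previous paragraph, an open cover realizes $\TC(f)$ if and only if each of its members can be deformed in $X\times Y$ to $\Gamma_f$, which is exactly the conclusion.

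There is essentially no obstacle here beyond citing the right statements in the right order; the only point requiring mild care is ensuring that the ANR hypothesis on $X\times Y$ is in force so that the passage $\sec=\sec_{\rm op}$ is legitimate, and that Proposition \ref{prop:equivalent A} applies without restriction on the open subset $A$, which it does since the proposition is stated for arbitrary $A\subseteq X\times Y$.
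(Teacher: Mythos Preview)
Your argument is correct and is exactly the reasoning the paper intends: the corollary is stated without proof, as an immediate consequence of Proposition~\ref{prop:equivalent A} together with the fact (from the preceding Lemma and the opening paragraph of the section) that $\pi_f$ is a fibration over the ANR $X\times Y$, so that $\sec(\pi_f)=\sec_{\rm op}(\pi_f)$. There is nothing to add.
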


As we mentioned in Remark \ref{rem:TC def}, for a large class of spaces $X$ one can compute $\cat(X)$ and $\TC(X)$ by taking
arbitrary subspaces of $X$ or $X\times X$ as domains of partial sections. We are going to show that an analogous result holds 
for the topological complexity of a fibration. 

\begin{lemma}
\label{lem:Srinivasan}
Let $f,g\colon X\to Y$ be continuous maps between compact metric ANR spaces, and let $A$ be an arbitrary subset of $X$.
If $f|_A\simeq g|_A$, then  there exists an open neighbourhood $U\subseteq X$ of $A$ such that $f|_U\simeq g|_U$.
\end{lemma}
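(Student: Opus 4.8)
The statement is essentially a relative version of the homotopy extension property, packaged for compact metric ANR's, and the natural strategy is to reduce it to the standard fact that a homotopy defined on a closed subset of an ANR (or on a suitable neighbourhood) can be extended. First I would pick a homotopy $F\colon A\times I\to Y$ with $F_0=f|_A$ and $F_1=g|_A$. The map $f$ and $g$ together with $F$ assemble into a map defined on the closed subset $(X\times\{0\})\cup(X\times\{1\})$ of $X\times I$, together with its partial extension $F$ over $A\times I$; concretely, consider the map $\Phi$ on $D:=(X\times\{0,1\})\cup(A\times I)$ given by $f$ on $X\times\{0\}$, by $g$ on $X\times\{1\}$, and by $F$ on $A\times I$ — these agree on overlaps, so $\Phi$ is well defined and continuous on $D$. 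Since $Y$ is an ANR, $\Phi$ extends to a continuous map $\widetilde\Phi$ on some open neighbourhood $N$ of $D$ in $X\times I$.

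The heart of the argument is then a tube/compactness step: because $X\times I$ is compact metric and $A\times I\subseteq N$ with $N$ open, the Wallace-type ``tube lemma'' (or simply uniform continuity of the distance function) produces an open set $U\subseteq X$ containing $A$ such that $U\times I\subseteq N$. Indeed $X\times\{0\}$ and $X\times\{1\}$ are already contained in $N$ (they lie in $D$), so one only needs to thicken in the $X$-direction over the interior of $I$, and compactness of $I$ lets one do this uniformly. Restricting $\widetilde\Phi$ to $U\times I$ then yields a homotopy from $\widetilde\Phi|_{U\times\{0\}}$ to $\widetilde\Phi|_{U\times\{1\}}$; but by construction $\widetilde\Phi|_{U\times\{0\}}=\Phi|_{U\times\{0\}}=f|_U$ and $\widetilde\Phi|_{U\times\{1\}}=g|_U$, so $f|_U\simeq g|_U$, as required.

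The one point that needs a little care — and which I expect to be the main obstacle, or at least the only non-formal step — is guaranteeing that $D$ is a suitable subset of $X\times I$ for the ANR extension property to apply with an \emph{open} neighbourhood of $D$ as the extension domain. Here $D$ is closed in $X\times I$ (it is the union of three closed sets, using that $A$ need not be closed but $A\times I$ can be replaced by $\overline A\times I$ after first extending $F$ over $\overline A\times I$, which is again possible since $\overline A$ is closed in the ANR $X$ and $F$, $f$, $g$ agree appropriately on $\overline A\times\{0,1\}$). With $D$ closed in the metric ANR $X\times I$, the neighbourhood extension theorem for ANR's applies verbatim and gives the open $N$ above. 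Thus the only genuine content is: extend $F$ from $A$ to $\overline A$, assemble $\Phi$ on the closed set $D$, extend over an open $N\supseteq D$ by the ANR property, and shrink to a tube $U\times I\subseteq N$ by compactness of $I$; the conclusion is then immediate.
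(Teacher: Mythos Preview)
Your overall architecture (assemble $\Phi$ on $D$, extend by the ANR property, shrink to a tube $U\times I$) is natural, but the step you flag as ``the only genuine content'' is in fact a genuine gap: you cannot in general extend the homotopy $F$ from $A\times I$ to $\overline A\times I$. The ANR neighbourhood extension theorem extends maps defined on a \emph{closed} subset, and $A\times I$ is not closed in $\overline A\times I$ unless $A$ was closed to begin with; the fact that $\overline A$ is closed in $X$ and that $f,g$ are already defined on $\overline A\times\{0,1\}$ does not help you fill in $\overline A\times(0,1)$ compatibly with $F$.

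Worse, this intermediate claim is actually false: extending $F$ over $\overline A\times I$ would prove $f|_{\overline A}\simeq g|_{\overline A}$, and that can fail. Take $X=Y=S^1$, $f=\id_{S^1}$, $g$ a constant map, and $A=S^1\setminus\{\mathrm{pt}\}$. Since $A$ is contractible, $f|_A\simeq g|_A$; but $\overline A=S^1$, and $f$ and $g$ are not homotopic on $S^1$ (they have different degrees). The lemma itself is fine here because $A$ is already open, so $U=A$ works --- but your route through $\overline A$ is blocked. This shows the step is not merely unjustified but unfixable as stated.

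The paper's proof circumvents exactly this obstacle: instead of seeking an exact extension of $F$, it passes to the adjoint $\widehat H\colon A\to Y^I$ and invokes the Walsh lemma, which produces a map $G\colon U\to Y^I$ on an open $U\supseteq A$ that is only $\varepsilon$-\emph{close} to $\widehat H$ (with $G|_A\simeq\widehat H$, not equal). One then uses uniform continuity of $f,g$ and the ``$\varepsilon$-close maps into a compact ANR are homotopic'' principle to conclude $f|_U\simeq G_0\simeq G_1\simeq g|_U$. The essential point is that the extension to an open neighbourhood is only available up to a controlled perturbation, and the ANR small-homotopy property absorbs that perturbation at the endpoints; your argument needs an analogous mechanism in place of the closure step.
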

\begin{proof}
For simplicity we will use the same notation $d$ for the metrics in $X$ and $Y$ and also for the induced supremum metric on 
the space of path $Y^I$. 

We will need the following standard properties of maps into metric ANR spaces:

$\bullet$ For every compact metric ANR space $E$ there exist an $\varepsilon>0$, such that every two maps $f,g\colon X\to E$ 
that are $\varepsilon$-close (i.e. $d(f(x),g(x))<\varepsilon$ for all $x\in X$) are homotopic (cf. \cite[Theorem 2.4]{Srinivasan}). 

$\bullet$ (Walsh lemma) Assume that $X$ and $E$ are separable metric spaces, and furthermore, that $E$ is an ANR. Let $h\colon A\to E$
be a continous map defined on an arbitrary subset $A\subseteq X$. Then, up to a small homotopy, $h$ can be extended to an open neighbourhood
of $A$. More precisely, for every $\varepsilon,\delta>0$ there exists an open 
subset $U\subseteq X$ containing $A$ and a map $\overline h\colon U\to E$, satisfying the following conditions:\\
\hspace*{2mm}(1) for every $u\in U$ there exists $a\in A$ such that $d(u,a)<\delta$ and $d(\overline h(u),h(a))<\varepsilon$;\\
\hspace*{2mm}(2) $\overline h|_A\simeq h$\\
(cf. \cite[Theorem 2.3]{Srinivasan} and the comments at the end of the proof therein).

Returning to the proof of our statement, let $\varepsilon>0$ be such that any two $\varepsilon$-close maps $Y$ are homotopic. Since $X$ 
is compact, $f$ and $g$ are uniformly continuous, so there exists $\delta>0$ such that $d(x,x')<\delta$ imply
$d(f(x),f(x'))<\frac{\varepsilon}{2}$ and $d(g(x),g(x'))<\frac{\varepsilon}{2}$.
The homotopy $H\colon A\times I\to Y$ between $f$ and $g$ corresponds by adjunction to a map $\widehat H\colon A\to Y^I$. 
It is well-known that if $Y$ is a compact metric ANR then $Y^I$ is a metric ANR. 
Thus we may apply the Walsh lemma to obtain an open neighbourhood $U$ of $A$ and a map $G\colon U\to Y^I$, such that 
for every $u\in U$ there exists $a_u\in A$ satisfying $d(u,a_u)<\delta$ and $d(G(u),\widehat H(a_u))<\frac{\varepsilon}{2}$
(i.e. $d(G(u)(t),\widehat H(a_u)(t))<\frac{\varepsilon}{2}$ for all $t\in I$). Define $G_0,G_1\colon U\to Y$ as $G_0(u):=G(u)(0)$ and 
$G_1(u):=G(u)(1)$.
Then for every $u\in U$ we have the triangle inequality (note that $H(a_u)(0)=f(a_u)$) 
$$d(G_0(u),f(u))\le d(G_0(u),\widehat H(a_u)(0))+d(f(a_u),f(u)) <\frac{\varepsilon}{2}+\frac{\varepsilon}{2}=\varepsilon.$$
As a consequence, $G_0$ and $f|_U$ are homotopic, and similarly for $G_1$ and $g|_U$. Since $G_0$ and $G_1$ are homotopic by construction, 
we conclude that $f|_U\simeq g|_U$ as claimed. 
\end{proof}

\begin{theorem}
Let $f\colon X\to Y$ be a fibration between compact metric ANR spaces $X$ and $Y$. Then $\TC(f)$ is equal to the minimal integer $n$ 
for which there exists a cover 
$$X\times Y=A_1\cup\ldots\cup A_n$$ 
such that each $A_i$ admits a continuous partial section to $\pi_f$.
\end{theorem}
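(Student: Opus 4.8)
Write $n_0$ for the quantity appearing in the statement, i.e. the minimal size of a (completely arbitrary) cover of $X\times Y$ by sets each admitting a continuous partial section to $\pi_f$. Since $f$ is a fibration, so is $\pi_f\colon X^I\to X\times Y$ (by the first lemma of this section), and $X\times Y$ is a compact metric ANR; hence, by the remark recalled in Section \ref{sec:Definition}, $\TC(f)=\sec(\pi_f)=\sec_{\rm op}(\pi_f)$, where $\sec_{\rm op}$ counts the minimal size of an \emph{open} cover by sets admitting partial sections. One inequality is then free: an open cover is in particular a cover, so $n_0\le\sec_{\rm op}(\pi_f)=\TC(f)$.

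The plan for the reverse inequality is to fatten an optimal arbitrary cover to an open one. Take $X\times Y=A_1\cup\cdots\cup A_{n_0}$ with each $A_i$ admitting a partial section to $\pi_f$. Since $f$ is a fibration, Proposition \ref{prop:equivalent A} applies to each $A_i$: admitting a partial section over $A_i$ is equivalent to the maps $f\circ\mathrm{pr}_1$ and $\mathrm{pr}_2$ being homotopic on $A_i$. Now $X\times Y$ and $Y$ are compact metric ANR's, so Lemma \ref{lem:Srinivasan}, applied to the pair of maps $f\circ\mathrm{pr}_1,\mathrm{pr}_2\colon X\times Y\to Y$ and the subset $A_i$, yields an open neighbourhood $U_i\supseteq A_i$ on which $f\circ\mathrm{pr}_1\simeq\mathrm{pr}_2$ still holds. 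By the converse direction of Proposition \ref{prop:equivalent A}, each $U_i$ admits a partial section to $\pi_f$, and $\{U_1,\ldots,U_{n_0}\}$ is an open cover of $X\times Y$; hence $\sec_{\rm op}(\pi_f)\le n_0$, that is, $\TC(f)\le n_0$.

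Putting the two inequalities together gives $\TC(f)=n_0$. The only substantial ingredient is Lemma \ref{lem:Srinivasan} (the Walsh-lemma–based neighbourhood extension of homotopies), which is already proved above; the remainder is a combination of Proposition \ref{prop:equivalent A} with the equality $\sec=\sec_{\rm op}$ for fibrations over ANR bases. For that reason I do not anticipate a genuine obstacle here — the theorem is essentially a repackaging of those facts, and the point at which to be careful is merely verifying the hypotheses of Lemma \ref{lem:Srinivasan}, namely compactness and the ANR property of $X\times Y$, both of which follow from the corresponding properties of $X$ and $Y$.
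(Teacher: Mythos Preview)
Your argument is correct and follows essentially the same route as the paper: translate partial sections into the homotopy condition via Proposition \ref{prop:equivalent A}, fatten each $A_i$ to an open neighbourhood using Lemma \ref{lem:Srinivasan}, and translate back. The only difference is that you are more explicit about the easy inequality and about invoking $\sec=\sec_{\rm op}$ for fibrations over ANR bases, whereas the paper simply observes that it suffices to enlarge each $A_i$ to an open set admitting a partial section.
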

\begin{proof}
It is clearly sufficient to show that each $A_i$ is contained in some open set that admits a partial section to $\pi_f$.

If $A_i$ admits a partial section to $\pi_f$ then the maps $f\circ\mathrm{pr}_1,\mathrm{pr}_2\colon A_i\to Y$ are homotopic
by Proposition \ref{prop:equivalent A}. Observe that $f\circ\mathrm{pr}_1$ and $\mathrm{pr}_2$
are defined on entire $X\times Y$. We may thus apply Lemma \ref{lem:Srinivasan}
to obtain an open neighbourhood $U_i\subseteq X\times Y$ of $A_i$, such that the maps $f\circ\mathrm{pr}_1,\mathrm{pr}_2\colon U_i\to Y$ 
are homotopic. Again by Proposition \ref{prop:equivalent A} it follows that $U_i$ admits a continuous partial section to $\pi_f$.
\end{proof}

Most estimates of $\TC(f)$ can be considerably strengthened if we assume that $f$ is a fibration. 

\begin{proposition}
\label{prop:TC(Y)&catXY}
If $f$ is a fibration then
$$\cat(Y)\le\TC(f)\le\min\{\TC(Y),\cat(X\times Y)\}.$$
In particular, $\TC(f)=1$ if, and only if $Y$ is contractible.
\end{proposition}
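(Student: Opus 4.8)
The plan is to assemble the two bounds from results already available in the excerpt. The lower bound $\cat(Y)\le\TC(f)$ is immediate from Proposition \ref{prop:TC and cat}, which holds for arbitrary maps, so no fibration hypothesis is even needed there. For the upper bound I would prove the two inequalities $\TC(f)\le\TC(Y)$ and $\TC(f)\le\cat(X\times Y)$ separately and then take the minimum. The first is precisely Corollary \ref{cor:fib & sec}(a): since $f$ is a fibration, $\TC(f)\le\TC(Y)$. So the only genuinely new content is the bound $\TC(f)\le\cat(X\times Y)$ for a fibration $f$.

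For $\TC(f)\le\cat(X\times Y)$ the natural approach is to use the characterization in Proposition \ref{prop:equivalent A}: a subset $A\subseteq X\times Y$ admits a partial section to $\pi_f$ if and only if the two maps $f\circ\mathrm{pr}_1$ and $\mathrm{pr}_2$ from $A$ to $Y$ are homotopic, equivalently if $A$ can be deformed in $X\times Y$ to the graph $\Gamma_f$. Now if $W\subseteq X\times Y$ is categorical, i.e. the inclusion $W\hookrightarrow X\times Y$ is null-homotopic, then both restricted maps $f\circ\mathrm{pr}_1|_W$ and $\mathrm{pr}_2|_W$ factor up to homotopy through a point of $Y$ (using that $Y$ is path-connected), hence are homotopic to each other. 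By Proposition \ref{prop:equivalent A} this gives a partial section of $\pi_f$ over $W$. Taking an open filtration $\emptyset=U_0\subset U_1\subset\cdots\subset U_k=X\times Y$ realizing $\cat(X\times Y)=k$ with each $U_i-U_{i-1}$ categorical, we obtain partial sections over each difference, so $\TC(f)=\sec(\pi_f)\le k=\cat(X\times Y)$.

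Finally, the ``in particular'' clause: if $Y$ is contractible then $\cat(Y)=1$, so the displayed inequalities force $\TC(f)=1$. Conversely, if $\TC(f)=1$ then $\cat(Y)\le\TC(f)=1$, so $\cat(Y)=1$, which by Proposition \ref{prop:LScat properties}(1) means $Y$ is contractible.

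I expect the only real point requiring care is the step ``categorical $\Rightarrow$ the two maps agree up to homotopy.'' One must check that a null-homotopy of $W\hookrightarrow X\times Y$ really does make $f\circ\mathrm{pr}_1|_W\simeq \mathrm{pr}_2|_W$: composing the null-homotopy with $\mathrm{pr}_1$ (then $f$) and with $\mathrm{pr}_2$ shows each of these two maps is homotopic to a constant map $W\to Y$, and since $Y$ is path-connected all constant maps to $Y$ are homotopic, so the two are homotopic to one another. This is elementary but is the one place where path-connectedness of $Y$ (a standing hypothesis) is used, and where the fibration hypothesis enters via Proposition \ref{prop:equivalent A}. Everything else is a direct citation of earlier results.
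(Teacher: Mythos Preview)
Your proof is correct and follows essentially the same route as the paper: the lower bound from Proposition \ref{prop:TC and cat}, the bound $\TC(f)\le\TC(Y)$ from Corollary \ref{cor:fib & sec}(a), and the bound $\TC(f)\le\cat(X\times Y)$ by showing that every categorical subset of $X\times Y$ admits a partial section of $\pi_f$. The only cosmetic difference is that for this last step the paper invokes the preceding Lemma (that $\pi_f$ is a fibration when $f$ is) and the general fact that a fibration admits a section over any categorical subset of its base, whereas you route the same argument through Proposition \ref{prop:equivalent A}; both arguments unwind to the same homotopy-lifting computation.
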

\begin{proof}
By Proposition \ref{prop:TC and cat} $\TC(f)\ge\cat(Y)$, and by Corollary \ref{cor:fib & sec} $\TC(f)\le\TC(Y)$. Moreover, since $\pi_f$ is a fibration,
there exists a partial section to $\pi_f$ over every categorical subset
of $X\times Y$. As a consequence $\TC(f)\le\cat(X\times Y)$.
\end{proof}

If $Y$ is a topological group (or more generally, for an H-group), then the complexity of $Y$ coincide with its category, so we obtain the following result:

\begin{corollary}
\label{cor:contrX}
Let $f\colon X\to Y$ be a fibration, and assume that $X$ is contractible or that  $Y$ is an H-group. Then $\TC(f)=\cat(Y)$.
\end{corollary}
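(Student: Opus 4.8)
The plan is to invoke the two inequalities of Proposition~\ref{prop:TC(Y)&catXY} together with standard facts relating $\cat$ and $\TC$ for H-groups. Recall that the statement to prove is: if $f\colon X\to Y$ is a fibration and either $X$ is contractible or $Y$ is an H-group, then $\TC(f)=\cat(Y)$.

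First I would handle the case $Y$ an H-group. From Proposition~\ref{prop:TC(Y)&catXY} we already have $\cat(Y)\le\TC(f)\le\TC(Y)$, so it suffices to show $\TC(Y)\le\cat(Y)$; but for an H-group this is exactly property (4) of Proposition~\ref{prop:TC properties} (which states $\TC(Y)=\cat(Y)$ for topological groups, and the same argument, using the multiplication-with-homotopy-unit-and-inverse, goes through for H-groups). Squeezing, $\TC(f)=\cat(Y)$.

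Next I would handle the case $X$ contractible. Again Proposition~\ref{prop:TC(Y)&catXY} gives $\cat(Y)\le\TC(f)\le\cat(X\times Y)$, so it suffices to check $\cat(X\times Y)\le\cat(Y)$. Since $X$ is contractible we have $X\times Y\simeq Y$, and by the homotopy invariance of $\cat$ (Proposition~\ref{prop:LScat properties}(2)) we get $\cat(X\times Y)=\cat(Y)$. Hence $\TC(f)=\cat(Y)$ in this case as well.

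I do not expect any genuine obstacle here: both cases are immediate corollaries of results already established in the excerpt, the only subtlety being the (standard) extension of the equality $\TC=\cat$ from topological groups to H-groups, which one should either cite or note follows verbatim from the group case since the proof of Proposition~\ref{prop:TC properties}(4) only uses a continuous multiplication with homotopy unit and homotopy inverse. If one wanted to avoid even that, one could instead remark that for $X$ contractible and $Y$ an H-group both arguments apply, and in the H-group case one may also bound $\TC(f)\le\cat(X\times Y)=\cat(Y)$ directly when $X$ is additionally contractible — but in general the H-group case genuinely needs the $\TC(Y)\le\cat(Y)$ route rather than the $\cat(X\times Y)$ route.
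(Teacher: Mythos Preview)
Your proof is correct and follows essentially the same approach as the paper: both cases are squeezed between the bounds of Proposition~\ref{prop:TC(Y)&catXY}, using $\TC(Y)=\cat(Y)$ for H-groups and $\cat(X\times Y)=\cat(Y)$ when $X$ is contractible. The paper's justification is terser (it only spells out the H-group case in the text preceding the corollary), but the argument is the same.
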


The following theorem allows a more detailed description of $\TC(f)$.

\begin{theorem}
\begin{enumerate}
\item[\textbf{a)}]
If $f\colon X\to Y$ is a fibration, then the fibration $\pi_f\colon X^I\to X\times Y$ is fibre-homotopy equivalent
to the projection $q\colon X\sqcap Y^I\to X\times Y$ given by $q(x,\alpha):=(x,\alpha(1))$.
\item[\textbf{b)}]
Furthermore, the following diagram is a pull-back
$$\xymatrix{
X\sqcap Y^I \ar[r] \ar[d]\pullbackcorner  & Y^I \ar[d]^\pi\\
X\times Y \ar[r]_{f\times 1} &Y\times Y
}$$
so in particular $q\colon X\sqcap Y^I\to X\times Y$ is a fibration with fibre $\Omega Y$.
\end{enumerate}
As a consequence, if $f\colon X\to Y$ is a fibration, then $\TC(f)$ equals the sectional category of the fibration
$q\colon X\sqcap Y^I\to X\times Y$.
\end{theorem}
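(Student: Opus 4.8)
The plan is to obtain this as a purely formal consequence of parts (a) and (b) together with the description of $\TC(f)$ for fibrations established earlier in this section. First I would recall that, since $f$ is a fibration, the Lemma at the beginning of the section shows that $\pi_f\colon X^I\to X\times Y$ is a fibration, and the Corollary immediately following it identifies $\TC(f)=\secat(\pi_f)$; the point there is that over the ANR base $X\times Y$ a homotopy section of a fibration can be rectified to a strict section, so that $\sec(\pi_f)=\sec_{\mathrm{op}}(\pi_f)=\secat(\pi_f)$.

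Next I would invoke part (a), which says $\pi_f$ is fibre-homotopy equivalent to $q\colon X\sqcap Y^I\to X\times Y$, together with part (b), which guarantees that $q$ is itself a fibration (so that speaking of "$\secat$ of the fibration $q$" is legitimate). The only remaining ingredient is the fibre-homotopy invariance of the sectional category, i.e. the Schwarz genus: if $\varphi\colon X^I\to X\sqcap Y^I$ is a fibrewise homotopy equivalence over $X\times Y$ with fibrewise homotopy inverse $\psi$, then post-composition with $\psi$ carries a homotopy section of $q$ over an open set $U$ to a homotopy section of $\pi_f$ over the same $U$, and symmetrically with $\varphi$; hence $\pi_f$ and $q$ admit open covers by homotopy-sectional sets of the same minimal cardinality, that is $\secat(\pi_f)=\secat(q)$. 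This fact is classical, cf. \cite{Schwarz}. Chaining the equalities then gives $\TC(f)=\secat(\pi_f)=\secat(q)$, as claimed.

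Since the argument is entirely formal, I do not expect any genuine obstacle; the one point that must be stated carefully is the fibre-homotopy invariance of $\secat$, which is standard and for which a reference suffices (one could also phrase it as an application of Lemma \ref{Comparison_Lemma}-type reasoning applied over the common base $X\times Y$ in both directions, but the classical statement is cleaner).
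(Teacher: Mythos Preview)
Your argument for the final clause (``As a consequence \ldots'') is correct and is exactly what the paper does: it simply notes that fibre-homotopy equivalent fibrations have the same sectional category, and combines this with the earlier identification $\TC(f)=\secat(\pi_f)$. Your version spells out the FHE-invariance of $\secat$ a bit more than the paper, which just asserts it in one line, but the content is identical.

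However, you have not proved parts (a) and (b), which are the substance of the theorem; you merely invoke them. The paper does prove them. For (a) it uses that a fibration $f$ admits a lifting function $\Gamma_f\colon X\sqcap Y^I\to X^I$, which by definition is a section of the natural map $p\colon X^I\to X\sqcap Y^I$, $p(\alpha)=(\alpha(0),f\circ\alpha)$; one checks that $p$ and $\Gamma_f$ are fibrewise over $X\times Y$ with $p\circ\Gamma_f=1$ and $\Gamma_f\circ p$ fibre-homotopic to the identity, giving the fibre-homotopy equivalence $\pi_f\simeq q$. For (b) the paper does a one-line set-theoretic identification of the pullback of $\pi\colon Y^I\to Y\times Y$ along $f\times 1$ with $X\sqcap Y^I$, from which $q$ inherits the fibration property and the fibre $\Omega Y$. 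If your intention was only to justify the concluding sentence, say so explicitly; otherwise you still owe proofs of (a) and (b).
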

\begin{proof}
\begin{enumerate}
\item[\textbf{a)}]
Recall that $f\colon X\to Y$ is a fibration if, and only if, there exists a lifting function $\Gamma_f\colon X\sqcap PY\to X^I$,
which is, by definition, a section to the natural projection $p\colon X^I\to X\sqcap Y^I$, given by 
$p(\alpha)=(\alpha(1), f\circ\alpha)$.
This may be restated by saying that $\Gamma_f$ and $p$ are fibrewise maps over $X\times Y$ as in the following 
commutative diagram (where $q(x,\alpha)=(x,\alpha(1)))$.
$$\xymatrix{
X^I \ar@<0.5ex>[rr]^p\ar[dr]_{\pi_f} & & {X\sqcap Y^I} \ar@<0.5ex>[ll]^{\Gamma_f}\ar[dl]^q \\
& X\times Y
}$$
Since $p\circ \Gamma_f=1_{X\sqcap Y^I}$ and $\Gamma_f\circ p$ is fibre-homotopic to $1_{X^I}$ we conclude that $\pi_f$
and $p$ are fibre-homotopy equivalent. 
\item[\textbf{b)}]
The second statement follows from the following computation
\begin{equation*}
\begin{split}
(X\times Y)\sqcap Y^I & =  \{(x,y,\alpha)\in X\times Y\times Y^I\mid f(x)=\alpha(0), y=\alpha(1)\}\\
  & =  \{(x,\alpha)\in X\times Y^I\mid f(x)=\alpha(0)\}= X\sqcap Y^I
\end{split}
\end{equation*}
Being a pull-back of the path-fibration $\pi\colon Y^I\to Y\times Y$, the map $q$ is also a fibration, with the same fibre as 
$\pi$, which is the loop space $\Omega Y$.
\end{enumerate}
We conclude the proof by observing that fibre-homotopy equivalent fibrations have the same sectional category.
\end{proof}

It may be worth noting that we have actually proved that if $f\colon X\to Y$ is a fibration, then the diagram
$$\xymatrix{
X^I \ar[r]^{f\circ -} \ar[d]_{\pi_f}\pullbackcorner  & Y^I \ar[d]^{\pi_Y}\\
X\times Y \ar[r]_{f\times 1} &Y\times Y
}$$
is a homotopy pull-back. Since the pull-back operation cannot increase sectional
category, we immediately deduce $\TC(f)=\secat(\pi_f)\le\secat(\pi_Y)=\TC(Y)$.
On the other hand the sectional category of a fibration is smaller or equal to the category of the base, therefore $\TC(f)\le\cat(X\times Y)$. We have thus
obtained an alternative proof of Proposition \ref{prop:TC(Y)&catXY}.

\begin{example}
\label{ex:basicTC}
\begin{enumerate}
\item $\TC(X\to\{y\})=1$, by Corollary \ref{cor:fib & sec}(b).
\item $\TC(\ev_1\colon PX\to X)=\cat(X)$, by Corollary \ref{cor:contrX}. 
\item $\TC(\ev_1\colon X^I\to X)=\TC(X)$, by Corollary 	\ref{cor:fib & sec}(b).
\item $\TC({\rm pr}_X\colon X\times F\to X)=\TC(X)$ by Corollary \ref{cor:fib & sec}(b). This example shows
that the complexity of a map $f\colon X\to Y$ can be much smaller than $\cat(X\times Y)$. 
\end{enumerate}
\end{example}

One very useful estimate of the topological complexity of a space is the 'dimension divided by connectivity'
bound (see \cite{Farber:IRM}): 
if $X$ is $\dim(X)$-dimensional and $\conn(X)$-connected, then 
$$\TC(X)\le\left\lfloor\frac{2\dim(X)}{\conn(X)+1}\right\rfloor+1,$$
(where $\lfloor r \rfloor$ stands for the value of $r$ rounded down to the closest integer).
The result is proved by obstruction theory applied to the Schwarz's \cite{Schwarz}
characterization of the sectional category. One could follow the same approach to estimate the sectional
category of the fibration $q\colon X\sqcap PY\to X\times Y$ with fibre $\Omega Y$, but it turns out that an even better estimate can be obtained by combining Proposition \ref{prop:TC(Y)&catXY} 
with the dimension divided connectivity estimate for the category (\cite{CLOT}...).

\begin{corollary}
\label{cor:dim-conn}
If $f\colon X\to Y$ is a fibration then
$$\TC(f)\le\min\left\{\left\lfloor\frac{\dim(X)}{\conn(X)+1}\right\rfloor,\left\lfloor\frac{\dim(Y)}{\conn(Y)+1}\right\rfloor\right\}+\left\lfloor\frac{\dim(Y)}{\conn(Y)+1}\right\rfloor+1,$$
\end{corollary}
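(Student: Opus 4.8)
The plan is to combine the bound from Proposition~\ref{prop:TC(Y)&catXY}, namely $\TC(f)\le\min\{\TC(Y),\cat(X\times Y)\}$, with the dimension-connectivity estimates for the category (Proposition~\ref{prop:LScat properties}(3)) and for the topological complexity (the 'dimension divided by connectivity' bound cited just before the statement). Concretely, I would estimate each of the two terms inside the $\min$ and then combine.

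First I would bound $\TC(Y)$. By the cited estimate for the topological complexity of a space, $\TC(Y)\le\lfloor 2\dim(Y)/(\conn(Y)+1)\rfloor+1$. Since $2\dim(Y)/(\conn(Y)+1)=\dim(Y)/(\conn(Y)+1)+\dim(Y)/(\conn(Y)+1)$ and $\lfloor a+b\rfloor\le\lfloor a\rfloor+\lfloor b\rfloor+1$ fails in the wrong direction, I would instead use $\lfloor a+a\rfloor\le 2\lfloor a\rfloor+1$ when $a$ is rational with small denominator — actually the cleanest route is to note $\lfloor 2a \rfloor \le 2\lfloor a\rfloor + 1$ always, so $\TC(Y)\le 2\lfloor\dim(Y)/(\conn(Y)+1)\rfloor+2$; hmm, this gives a slightly weaker constant than what the corollary claims. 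I suspect the intended argument uses instead $\cat(Y\times Y)\le 2\cat(Y)-1$ together with $\TC(Y)\le\cat(Y\times Y)$, and then $\cat(Y)\le\lfloor\dim(Y)/(\conn(Y)+1)\rfloor+1$ from Proposition~\ref{prop:LScat properties}(3). That yields $\TC(Y)\le 2\bigl(\lfloor\dim(Y)/(\conn(Y)+1)\rfloor+1\bigr)-1=2\lfloor\dim(Y)/(\conn(Y)+1)\rfloor+1$, which is the ``$\lfloor\dim(Y)/(\conn(Y)+1)\rfloor+\lfloor\dim(Y)/(\conn(Y)+1)\rfloor+1$'' appearing in the second summand of the claimed bound.

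Next I would bound $\cat(X\times Y)$. By the product formula (Proposition~\ref{prop:LScat properties}(5)), $\cat(X\times Y)\le\cat(X)+\cat(Y)-1$, and by the dimension-connectivity estimate $\cat(X)\le\lfloor\dim(X)/(\conn(X)+1)\rfloor+1$ and $\cat(Y)\le\lfloor\dim(Y)/(\conn(Y)+1)\rfloor+1$. Hence $\cat(X\times Y)\le\lfloor\dim(X)/(\conn(X)+1)\rfloor+\lfloor\dim(Y)/(\conn(Y)+1)\rfloor+1$. Now $\TC(f)$ is bounded by the minimum of the two quantities just obtained; since both share the summand $\lfloor\dim(Y)/(\conn(Y)+1)\rfloor+1$ (one with an extra $\lfloor\dim(Y)/(\conn(Y)+1)\rfloor$, the other with an extra $\lfloor\dim(X)/(\conn(X)+1)\rfloor$), the minimum is $\min\{\lfloor\dim(X)/(\conn(X)+1)\rfloor,\lfloor\dim(Y)/(\conn(Y)+1)\rfloor\}+\lfloor\dim(Y)/(\conn(Y)+1)\rfloor+1$, which is exactly the asserted bound.

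The main obstacle is pinning down the exact floor-function bookkeeping: the cited bound for $\TC(Y)$ has a $2\dim(Y)$ in the numerator, and one must be careful that $\lfloor 2\dim(Y)/(\conn(Y)+1)\rfloor$ really is $\le 2\lfloor\dim(Y)/(\conn(Y)+1)\rfloor+1$ (this is elementary but needs a line), or alternatively that the route through $\TC(Y)\le\cat(Y\times Y)\le 2\cat(Y)-1$ gives the sharper constant directly — I expect the latter is what is intended, and I would present the proof that way to avoid any off-by-one discrepancy. Everything else is a routine assembly of Propositions~\ref{prop:LScat properties} and~\ref{prop:TC(Y)&catXY} and the quoted estimate for $\TC(Y)$.
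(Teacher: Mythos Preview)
Your proposal is correct and takes essentially the same route as the paper: the paper restates Proposition~\ref{prop:TC(Y)&catXY} as $\TC(f)\le\min\{\cat(X\times Y),\cat(Y\times Y)\}$ (i.e., it goes directly through $\TC(Y)\le\cat(Y\times Y)$ rather than trying the $\lfloor 2\dim(Y)/(\conn(Y)+1)\rfloor+1$ bound first), then applies the product formula $\cat(A\times B)\le\cat(A)+\cat(B)-1$ together with the dimension-connectivity estimate for $\cat$, exactly as you do in your second paragraph. Your detour through the direct $\TC(Y)$ bound is unnecessary, but you correctly identify and carry out the argument that works.
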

\begin{proof}
We may restate Proposition \ref{prop:TC(Y)&catXY} as 
$$\TC(f)\le\min\{\cat(X\times Y),\cat(Y\times Y)\}.$$
Then the combination of the bound for the category of a product 
$$\cat(X\times Y)\le\cat(X)+\cat(Y)-1,$$ with the
'dimension divided connectivity' bound for the category \cite{CLOT} 
$$\TC(f)\le\left\lfloor\frac{\dim(X)}{\conn(X)+1}\right\rfloor$$
yields the stated result.
\end{proof}

\begin{example}
\begin{enumerate}
\item Consider the covering map $p\colon S^n\to\RR P^n$: since dimension-to-connectivity ratio is smaller 
for the sphere than for the projective plane, Corollary \ref{cor:dim-conn} yields $\TC(p)\le 1+n+1=n+2$.
In comparison, $\TC(\RR P^n)$ is usually much bigger and closer to $2n$ (cf. \cite{Farber:ITR}).
\item Similarly, for the standard quotient map $q\colon S^{2n+1}\to\CC P^n$ we obtain the estimate
$\TC(q)\le n+2$, which is much smaller that $\TC(\CC P^n)=2n+2$.
\item For a fibration over a sphere $f\colon X\to S^n$ we obtain $2=\cat(S^n)\le\TC(f)\le 3$. Observe that if $n$ is
odd, we have $\TC(f)=2$ by Corollary \ref{cor:fib & sec}, and the difference is caused by the fact that for odd-dimensional
sphere the dimension-to-connectivity estimate is not sharp. 
\end{enumerate}
\end{example}

Let us illustrate the use of the cohomological estimate in the computation of the topological complexity of a map. 

There are many fibrations for which $f^*\colon H^*(Y)\to H^*(X)$ is trivial (examples include $p\colon S^n\to \RR P^n$,
$q\colon S^{2n+1}\to \CC P^n$, Hopf fibrations,...). In that case non-trivial elements
in $\Ker (1,f)^*$ must be contained in $\oplus_{j>0} H^i(X)\otimes H^j(Y)$. It follows that every $k$-fold product
in $\Ker (1,f)^*$ 'contains'a $k$-fold product in $H^*(Y)$, therefore
$$	\nil(\Ker (1,f)^*)\le\nil(H^*(Y))\le\cat(Y),$$
so if $f^*=0$ the cohomology estimate does not improve the estimate $\TC(f)\ge\cat(Y)$.

\begin{example}
Let $f\colon SO(n)\to S^{n-1}$ be the standard fibration obtained by projecting each orthogonal matrix to its last column. 
If $n$ is even, then 
$$2=\cat(S^{n-1})\le \TC(f)\le \TC(S^{n-1})=2,$$ 
hence $\TC(f)=2$. However, if $n$ is odd, then $2\le\TC(f)\le 3$, and we are going to use the cohomology estimate
to show that the actual value is 3. In fact, it is well known that the image $f^*(u)$ of a generator $u\in H^{n-1}(S^{n-1})$ 
is a non-trivial element of $H^{n-1}(SO(n)$ because it reduces to one of the standard generators of $H*{n-1}(SO(n);\ZZ/2)$. 
Therefore $f^*(u)\otimes 1-1\otimes u\in \Ker(1,f)^*$ and 
$$(f^*(u)\otimes 1-1\otimes u)^2=-2 f^*(u)\otimes u\ne 0.$$
We conclude that $\TC(f)=3$.
\end{example}

The above example is an instance of a general situation when $f^*\colon H^*(Y)\to H^*(X)$ is injective. If we apply 
a cohomology functor $H^*$ to the following commutative diagram
$$\xymatrix{
X \ar[r]^f \ar[d]_{(1,f)} & Y \ar[d]^\Delta\\
X\times Y \ar[r]_-{f\times 1} & Y\times Y
}$$
and assume that $H^*$ has field coefficients or that $H^*(Y)$ is free, and that $f^*$ is injective. Then we obtain the diagram
$$\xymatrix{
H^*(X) &  & H^*(Y) \ar@{>->}[ll]_{f^*} \\
H^*(X\times Y) \ar[u]^{(1,f)^*} & & H^*(Y\times Y) \ar[u]_{\Delta_Y^*} \ar[ll]_-{(f\times 1)^*}\\
H^*(X)\otimes H^*(Y) \ar[u]^\cong & & H^*(Y)\otimes H^*(Y) \ar[u]_\cong \ar@{>->}[ll]_{f^*\otimes 1}
}$$
Observe that the $f^*\otimes 1$ is injective because we assumed that either $H^*$ has field coefficients or that
$H^*(Y)$ is free, and tensoring with a free module preserves injectivity. 
The commutativity of the diagram implies that
we can identify $\Ker\Delta_Y^*$ with a subideal of $\Ker(1,f)^*$, so we have proved the following result:

\begin{theorem}
\label{thm:mono coh}
Let $f\colon X\to Y$ be any map and assume that we consider a cohomology with field coefficients 
or that $H^*(Y)$ is free.
If $f^*\colon H^*(Y)\to H^*(X)$ is injective, then $TC(f)\ge \nil(\Ker\Delta_Y^*)$.

If, in addition, $f$ a fibration,  then 
$\nil(\Ker\Delta_Y^*)\le \TC(f)\le \TC(Y).$
\end{theorem}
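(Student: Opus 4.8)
The plan is to make precise the diagram chase sketched just before the statement, and then to invoke the cohomological lower bound of Theorem \ref{thm:coho bound} together with Corollary \ref{cor:fib & sec}.

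First I would apply the cohomology functor $H^*$ (with the chosen coefficients) to the commutative square whose corners are $X,\,Y,\,X\times Y,\,Y\times Y$, whose horizontal arrows are $f$ and $f\times 1$, and whose vertical arrows are $(1,f)$ and the diagonal $\Delta_Y$. This yields the identity $(1,f)^*\circ(f\times 1)^*=f^*\circ\Delta_Y^*$. Under the coefficient hypotheses the Künneth maps $H^*(X)\otimes H^*(Y)\xrightarrow{\cong}H^*(X\times Y)$ and $H^*(Y)\otimes H^*(Y)\xrightarrow{\cong}H^*(Y\times Y)$ are ring isomorphisms, natural in both factors, so under these identifications $(f\times 1)^*$ becomes $f^*\otimes 1$.

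Next I would check that $f^*\otimes 1$ is injective: over a field every module is flat, and a free $H^*(Y)$ is flat as well, so tensoring the monomorphism $f^*$ with $H^*(Y)$ preserves injectivity; hence $(f\times 1)^*$ is an injective ring homomorphism. A one-line chase then shows that for $w\in\Ker\Delta_Y^*$ one has $(1,f)^*\big((f\times 1)^*(w)\big)=f^*\big(\Delta_Y^*(w)\big)=0$, so $(f\times 1)^*$ maps $\Ker\Delta_Y^*$ injectively into $\Ker(1,f)^*$; being multiplicative and injective it identifies $\Ker\Delta_Y^*$ with a subset of $\Ker(1,f)^*$ on which the vanishing of products is detected exactly as in $H^*(Y\times Y)$. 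Therefore $\nil(\Ker\Delta_Y^*)\le\nil(\Ker(1,f)^*)$, and Theorem \ref{thm:coho bound} gives $\TC(f)\ge\nil(\Ker(1,f)^*)\ge\nil(\Ker\Delta_Y^*)$, which is the first assertion. For the second, if in addition $f$ is a fibration then Corollary \ref{cor:fib & sec}(a) gives $\TC(f)\le\TC(Y)$, and combining this with the lower bound yields $\nil(\Ker\Delta_Y^*)\le\TC(f)\le\TC(Y)$.

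I expect the main obstacle to be bookkeeping rather than substance: one must justify that the Künneth isomorphism is natural enough to identify $(f\times 1)^*$ with $f^*\otimes 1$, and one must phrase the conclusion about $\Ker\Delta_Y^*$ carefully, since $(f\times 1)^*$ need not be surjective, so $\Ker\Delta_Y^*$ appears inside $\Ker(1,f)^*$ only as the isomorphic image of an ideal — which is nonetheless precisely what is needed to compare nilpotencies. A minor point is to reconcile the hypotheses here with those of Theorem \ref{thm:coho bound}; working with \v Cech cohomology $\cH^*$, or restricting to ENR spaces, removes any friction in the general (non-fibration) case.
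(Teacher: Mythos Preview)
Your proposal is correct and follows essentially the same route as the paper: the same commutative square $(1,f)\circ f^*=\Delta_Y^*\circ(f\times 1)^*$, the Künneth identification of $(f\times 1)^*$ with $f^*\otimes 1$, the flatness/freeness argument for injectivity, and the resulting embedding of $\Ker\Delta_Y^*$ into $\Ker(1,f)^*$, followed by Theorem~\ref{thm:coho bound} and Corollary~\ref{cor:fib & sec}(a). Your write-up is in fact a bit more careful than the paper's about the bookkeeping (naturality of Künneth, the ideal-vs-image subtlety, and the ENR hypotheses needed to invoke Theorem~\ref{thm:coho bound}).
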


Note that the nilpotency of $\Ker\Delta_Y^*$ was introduced by Farber \cite{Farber:TCMP} 
(under the name of 'zero divisors cup length') as the basic lower bound for the topological complexity. 
In many cases (in fact, in almost all cases where the exact value of $\TC(Y)$ is known)
$\nil(\Ker\Delta^*_Y)$ is either equal to $\TC(Y)$ or to $\TC(Y)-1$, so the above estimate is 
a very useful tool for computations. 

An important class of maps to which the above Theorem applies are fibre bundles whose fibres
are totally non-homologous to zero. Recall that the fibre $F$ of a fibration $f\colon X\to Y$
is said to be \emph{totally non-homologous to zero} with respect to a field $R$ if the homomorphism
$H^*(X;R)\to H^*(F;R)$ induced by the inclusion of the fibre is surjective. If that case 
the Serre spectral sequence for $f$ collapses at the $E_2$-term, which in turn implies that 
$f^*\colon H^*(Y;R)\to H^*(X;R)$ is injective. 

\begin{corollary}
If $f\colon X\to Y$ is a fibration whose fibre is totally non-homologous to zero with respect to 
a field $R$, and if $\TC(Y)=\nil(\Ker\Delta_Y^*)$ (cohomology with coefficients in $R$), then 
$\TC(f)=\TC(Y)$.
\end{corollary}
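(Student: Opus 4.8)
The plan is to chain together the three inequalities that are already available. The statement asks us to show $\TC(f) = \TC(Y)$ under the hypotheses that $f$ is a fibration, its fibre is totally non-homologous to zero with respect to a field $R$, and $\TC(Y) = \nil(\Ker \Delta_Y^*)$ computed with $R$-coefficients. First I would invoke the standard fact that if the fibre $F$ of a fibration $f\colon X\to Y$ is totally non-homologous to zero over a field $R$, then the Serre spectral sequence of $f$ collapses at $E_2$; this is recalled in the paragraph preceding the corollary. Collapse at $E_2$ forces $f^*\colon H^*(Y;R)\to H^*(X;R)$ to be injective, since with field coefficients $E_2^{*,0} = H^*(Y;R)$ survives to $E_\infty$ and maps isomorphically onto the bottom filtration quotient of $H^*(X;R)$.

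Having established injectivity of $f^*$, I would apply Theorem \ref{thm:mono coh}: since $f$ is a fibration and $f^*$ is injective (and we are using field coefficients, so the freeness hypothesis is automatic), we get
$$\nil(\Ker\Delta_Y^*) \le \TC(f) \le \TC(Y).$$
Now the third hypothesis, $\TC(Y) = \nil(\Ker\Delta_Y^*)$, pinches this chain: the left and right ends coincide, so $\TC(f) = \TC(Y)$. That is the entire argument.

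I do not expect any serious obstacle here, since all the substantial work has been done in Theorem \ref{thm:mono coh} and in the standard spectral-sequence lemma; the corollary is essentially a bookkeeping statement. The only point requiring a small amount of care is the passage from "totally non-homologous to zero" to "$f^*$ injective" — one should be slightly careful that "collapse at $E_2$" genuinely gives injectivity of $f^*$ and not merely of some associated graded map, but over a field the extension problems are trivial and the edge homomorphism $H^*(Y;R)\to H^*(X;R)$ is injective precisely because $E_2^{*,0}\to E_\infty^{*,0}$ is an isomorphism onto a subquotient that injects into $H^*(X;R)$. This is classical (e.g. the Leray--Hirsch theorem is the homological shadow of the same phenomenon), so I would state it with a one-line justification or a citation rather than a detailed argument.
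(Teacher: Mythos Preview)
Your argument is correct and matches the paper's approach exactly: the paragraph preceding the corollary already records that totally non-homologous to zero implies injectivity of $f^*$ via collapse of the Serre spectral sequence, and the corollary is then an immediate squeeze using Theorem~\ref{thm:mono coh} together with the hypothesis $\TC(Y)=\nil(\Ker\Delta_Y^*)$.
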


Let $X$ be a pointed CW-complex (we omit the base-point from the notation), and let ${\rm Cov}(X)$ denote 
the set of (equivalence classes) of base-point preserving covering projections over $X$. It is well-known 
that there is a bijection between $\Cov(X)$ and the lattice of subgroups of the fundamental group $\pi_1(X)$. 
To every $G\le\pi_1(X)$ there corresponds a unique $p_G\colon\wX_G\to X$ such that $\im (p_G)_\sharp=G$. 
In particular, $p_{\pi_1(X)}=\id_X$ and $p_{\{1\}}$ is the universal covering projection over $X$.

If $G,G'$ are subgroups of $\pi_1(X)$, then the lifting criterion for covering spaces implies that 
$G\le G'$ if, and only if, there exists a map
$v\colon \wX_G\to \wX_{G'}$ such that the following diagram commutes
$$\xymatrix{
\widetilde X_G \ar@{-->}[rr]^v \ar[dr]_{p_G} & & \widetilde X_{G'}\ar[dl]^{p_{G'}}\\
& X
}$$ 
Moreover, when such $v$ exists it is unique and it is itself a covering projection. 
Therefore, if $G\le G'\leq\pi_1(X)$, then there is a fibration $v$ such that $p_{G'}v=p_G$, and 
Theorem \ref{thm:precomp fibration} implies that $\TC(p_G)\le \TC(p_{G'})$. We have thus proved

\begin{theorem}
\label{thm:TCcovering}
The topological complexity of covering projections determines an increasing map 
from the lattice of subgroups of $\pi_1(X)$ to $\NN\cup\{\infty\}$.
Its minimal value is the topological complexity of the universal covering projection
and its maximal value is $\TC(X)$.
\end{theorem}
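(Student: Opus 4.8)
The plan is to unwind the definitions and then lean on the discussion immediately preceding the statement: almost all of the content has already been extracted there, so the proof reduces to three bookkeeping points about the assignment $G\mapsto\TC(p_G)$ on the lattice $\mathcal L$ of subgroups of $\pi_1(X)$ — namely, (i) that it is a well-defined function $\mathcal L\to\NN\cup\{\infty\}$, (ii) that it is order-preserving, and (iii) that its extreme values are realised at the least and greatest elements of $\mathcal L$.

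For (i): by the classification of covering spaces each $G\le\pi_1(X)$ determines $p_G\colon\wX_G\to X$ uniquely up to equivalence of covers, and equivalent covers are in particular fibre-homotopy equivalent, so Corollary \ref{cor:FHE} shows that $\TC(p_G)$ depends on $G$ alone. For (ii): given $G\le G'$, the lifting criterion provides a covering projection $v\colon\wX_G\to\wX_{G'}$ with $p_{G'}\circ v=p_G$; since a covering projection is a fibration, Theorem \ref{thm:precomp fibration} yields
$$\TC(p_G)=\TC(p_{G'}\circ v)\le\TC(p_{G'}),$$
which is exactly the statement that $G\mapsto\TC(p_G)$ is increasing.

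For (iii): the lattice $\mathcal L$ has least element $\{1\}$ and greatest element $\pi_1(X)$, so by (ii)
$$\TC(p_{\{1\}})\le\TC(p_G)\le\TC(p_{\pi_1(X)})\qquad\text{for every }G\le\pi_1(X).$$
Here $p_{\{1\}}$ is the universal covering projection, which gives the claimed minimal value, while $p_{\pi_1(X)}=\id_X$ and $\TC(\id_X)=\TC(X)$ (Section \ref{sec:Definition}), which gives the maximal value $\TC(X)$. I do not expect a genuine obstacle: the one substantive ingredient — monotonicity of $\TC$ under pre-composition with a covering — is already supplied by Theorem \ref{thm:precomp fibration}, and the only point needing a word of care is that $\TC(p_G)$ is an invariant of the equivalence class of the cover rather than of a particular model, which is immediate from the FHE-invariance in Corollary \ref{cor:FHE}.
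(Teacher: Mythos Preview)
Your proposal is correct and follows essentially the same route as the paper: the monotonicity is obtained exactly as in the paragraph preceding the theorem, by lifting $p_G$ through $p_{G'}$ via a covering projection $v$ and invoking Theorem~\ref{thm:precomp fibration}, and the extremal values come from $p_{\{1\}}$ and $p_{\pi_1(X)}=\id_X$. Your additional remark on well-definedness via Corollary~\ref{cor:FHE} is a small clarification the paper leaves implicit, but otherwise the arguments coincide.
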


Observe that for an arbitrary covering projection $p\colon \wX\to X$ Propositon \ref{prop:TC(Y)&catXY} implies 
the estimate 
$\cat(X)\le\TC(p)\le\cat(X\times \wX),$
which is often easier to compute.

Let us now study more closely covering projections over Eilenberg-MacLane spaces. The homotopy type of 
an Eilenberg-MacLane space $K(G,1)$ is uniquely determined by the group $G$. 
As a consequence both $\cat(K(G,1))$ and $\TC(K(G,1))$ are in fact invariants of $G$ and are often 
denoted as $\cat(G)$ and $\TC(G)$, respectively. Every covering projections over $K(G,1)$ corresponds to 
a subgroup $H\le G$ and its total space is in fact an Eilenberg-MacLane space of type $K(H,1)$. 
Since the universal covering space of $K(G,1)$ is contractible we have $\TC(p_{\{1\}})=\cat(G)$ by \ref{cor:contrX}.
Theorem \ref{thm:TCcovering} then yields a general estimate 
$$\cat(G)\le \TC\big(p\colon K(H,1)\to K(G,1)\big)\le\TC(G).$$
Note that if $G$ is abelian then $K(G,1)$ is an $H$-group and Corollary \ref{cor:contrX} implies that $\TC(p)=\cat(G)$
for every covering projection $p$ with base $K(G,1)$. 

We also give two non-commutative examples. Let $p\colon \wX\to S^1\vee S^1$ be the universal covering of the wedge 
of two circles. Since $\wX$ is contractible, we get $\TC(p)=2$, while $\TC(S^1\vee S^1)=3$.
Similarly, let $S$ be a closed surface different from the sphere or projective plane, and let $p\colon \widetilde S \to S$
be its universal covering. Then $\widetilde S$ is contractible, therefore $\TC(p)=\cat(S)=3$ while $\TC(S)=5$. 

\begin{remark}
Eilenberg and Ganea \cite{Eilenberg-Ganea} showed that $\cat(G)$ can be expressed in a completely algebraic manner:
they proved that $\cat(G)=\cat(K(G,1)=\cd(G)+1$, where $\mathrm{cd}$ denotes the \emph{cohomological dimension} of $G$. 

At this moment there is no completely algebraic way to compute $\TC(G)$. We have the general estimate
$$\cd(G)+1=\cat(K(G,1))\le\TC(G)\le \cat(K(G,1)\times K(G,1))=\cd(G\times G)+1.$$
Rudyak \cite{Rudyak} proved that for a suitable choice of group $G$ the value of $\TC(G)$ can be any number between 
$\cd(G)+1$ and $\cd(G\times G)+1$. On the other hand it has been recently proved by Farber and Mescher \cite{Farber-Mescher} 
that for a large class of groups (including all hyperbolic groups) $\TC(G)$ is either $\cd(G\times G)$ or $\cd(G\times G)+1$.
\end{remark}

We conclude with a partial result about finite-sheeted covering projections.

\begin{theorem}
Assume that the topological complexity of $X$ equals the rational cohomological lower bound $\TC(X)=\nil(\ker H^*(\Delta;\QQ))$.
Then $\TC(p)=\TC(X)$ for every finite-sheeted covering projection $p\colon \wX\to X$.
\end{theorem}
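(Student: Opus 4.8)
The plan is to squeeze $\TC(p)$ between two copies of $\TC(X)$. The upper bound costs nothing: a covering projection is in particular a fibration (and the total space $\wX$ inherits from $X$ the point‑set regularity assumed in this section, being locally homeomorphic to it), so Corollary \ref{cor:fib & sec}(a) gives $\TC(p)\le\TC(X)$ immediately. For the matching lower bound I would invoke the cohomological estimate of Theorem \ref{thm:mono coh}. Working throughout with rational coefficients — a field, so the hypotheses of that theorem are met — its first clause requires that $p^*\colon H^*(X;\QQ)\to H^*(\wX;\QQ)$ be injective; granting this, and since $p$ is a fibration, Theorem \ref{thm:mono coh} applied to $p$ yields $\TC(p)\ge\nil(\Ker\Delta_X^*)$, where $\Delta_X\colon X\to X\times X$ is the diagonal and cohomology is rational. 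This quantity is exactly the rational lower bound appearing in the hypothesis, namely $\TC(X)=\nil(\Ker\Delta_X^*)$, so combining the two inequalities gives $\TC(p)=\TC(X)$.

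Hence the whole argument reduces to the injectivity of $p^*$ in rational cohomology, and this is the single place where finiteness of the covering is used. An $n$‑sheeted covering $p\colon\wX\to X$ carries a transfer homomorphism $\tau\colon H^*(\wX;\QQ)\to H^*(X;\QQ)$ satisfying $\tau\circ p^*=n\cdot\id$ on $H^*(X;\QQ)$. Since $n$ is a positive integer it is invertible in $\QQ$, so $\frac1n\tau$ splits $p^*$; in particular $p^*$ is a monomorphism. (An infinite‑sheeted covering admits no such transfer, which is why the finiteness assumption is essential rather than a convenience.)

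The transfer step is the only substantive ingredient; everything else is a direct concatenation of Corollary \ref{cor:fib & sec}(a), Theorem \ref{thm:mono coh} applied to $p$, and the standing equality $\TC(X)=\nil(\Ker\Delta_X^*)$. I would also record that this last equality is genuinely used: without it the argument only produces the chain $\nil(\Ker\Delta_X^*)\le\TC(p)\le\TC(X)$, which leaves open the possibility $\TC(p)<\TC(X)$.
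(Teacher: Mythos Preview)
Your proof is correct and follows essentially the same route as the paper's: the paper simply records that finite-sheeted covering projections induce monomorphisms in rational cohomology (citing Hatcher for the transfer argument you spell out) and then invokes Theorem \ref{thm:mono coh}, whose fibration clause already packages both the lower bound $\nil(\Ker\Delta_X^*)\le\TC(p)$ and the upper bound $\TC(p)\le\TC(X)$ in one line. Your separate appeal to Corollary \ref{cor:fib & sec}(a) for the upper bound is redundant but harmless.
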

\begin{proof}
Recall that finite-sheeted covering projections induce monomorphisms in rational cohomology (see \cite[Proposition 3G.1]{Hatcher}). 
Then the claim follows directly from Theorem \ref{thm:mono coh}.
\end{proof}

For instance, the topological complexity of every finite-sheeted covering over an orientable surface $P$
of genus bigger then 1 is equal to $\TC(P)=5$, while the topological complexity of its universal cover is
equal to $\cat(P)=3$. We do not know whether there are covering projections to $P$ whose topological
complexity is 4. On the other hand we suspect that $\TC(p)=\TC(X)$ for every finite sheeted covering 
projection $p$ with base $X$.

\section*{Acknowledgements}

We are grateful to Nick Callor for helpful discussions on certain aspects of the article, 
in particular for his suggestion to base the definition of topological complexity on open
filtrations. We are also indebted with Cesar Zapata and Michael Farber who helped us 
identify and correct errors that appeared in a previous version of the paper (Theorems 
\ref{thm:upper bound} and \ref{thm:coho bound}).

\end{document}